\newtheorem{theorem}{Theorem}[section]
\newtheorem{lemma}[theorem]{Lemma}
\newtheorem{definition}[theorem]{Definition}
\newtheorem{corollary}[theorem]{Corollary}
\newtheorem{proposition*}{Proposition}
\newtheorem{proposition}[theorem]{Proposition}
\newtheorem{corollary*}[proposition*]{Corollary}
\theoremstyle{definition} 
\newtheorem{remark}[theorem]{Remark}
\newtheorem{definition*}[proposition*]{Definition}
\newtheorem{lemma*}[proposition*]{Lemma}
\newtheorem{theorem*}[proposition*]{Theorem}
\newtheorem{conjecture}[theorem]{Conjecture}
\providecommand{\keywords}[1]{\textbf{\textit{Keywords:}} #1}
\begin{document}

\title{On Topology of Compact Hessian Manifolds}
\author{
    Hanwen Liu\thanks{Mathematics Institute, University of Warwick, Coventry, CV4 7AL, UK. Email: hanwen.liu@warwick.ac.uk.}
}
\date{} 

\maketitle

\begin{abstract}
We investigate the global topological constraints and structural properties of compact Hessian manifolds. By establishing novel fibration and splitting theorems, we confirm Chern's conjecture on the vanishing of the Euler characteristic for this class of affine manifolds. Applying these techniques to low dimensions, we provide a topological classification of complete Hessian surfaces. Furthermore, utilizing the theory of Hitchin systems and the Cheng-Yau solution to the real Monge-Ampère equation, we establish a geometric classification of closed orientable Hessian $3$-manifolds.
\end{abstract}

\begin{center}
\keywords{Hessian Metric, Affine Manifold, Low-Dimensional Topology}
\end{center}

\tableofcontents
\onehalfspacing
\raggedbottom

\section{Basics on Hessian Manifolds}

\subsection{Introduction and Background}

The study of Hessian manifolds originates with the foundational work of J.-L. Koszul in 1961 \cite{15}. While their roots lie in affine differential geometry, in recent decades, these geometric structures have attracted significant attention and emerged as indispensable tools across a diverse spectrum of mathematical and scientific disciplines. Their influence spans from the theoretical underpinnings of information geometry and complex analysis to practical applications in optimization and mathematical statistics, and even touches upon the abstract realms of homological mirror symmetry. This resurgence of interest highlights the deep connections these manifolds forge between seemingly disparate areas, offering a unifying geometric language to explore sophisticated phenomena.

In information geometry, the parameter space of a multi-dimensional smooth family of probability density functions is oftentimes a Hessian manifold, with its metric defined by the relative entropy, and the Amari–Chentsov tensor codifies its divergence function. Hessian metrics also arise naturally in convex optimization, where barrier functions induce complete Hessian metrics on convex domains. For more details, we refer to \cite{16} and \cite{17}.

Also, for applications to mirror symmetry, the seminal work of Hitchin \cite{18}, Gross–Siebert \cite{19}, and Kontsevich–Soibelman \cite{20} has shown that compact Hessian manifolds model the base of torus fibrations in the Strominger-Yau-Zaslow conjecture.

On the other hand, Cheng and Yau’s solution \cite{27} to the real Monge–Ampère equation constructs complete affine hyperspheres of which Blaschke metrics are precisely global Hessian metrics with prescribed volume forms. Remarkably, these examples are linked to convex projective manifolds via the work of Loftin \cite{22}, Benoist–Hulin \cite{23}, and that of Danciger–Guéritaud–Kassel \cite{24}.

However, despite this rich tapestry, fundamental questions about Hessian manifolds remain open, particularly regarding the interplay between global topology and algebraic obstructions. In this manuscript, we study Hessian manifolds from topological perspectives:

After briefly summarizing the history and background of the theory of Hessian manifolds, our investigation begins by answering the natural question that in which case a Hessian manifold $((M, \nabla),h)$ may admit a global potential $f\in C^\infty(M)$ such that $h=\nabla(df)$. Using techniques from sheaf cohomology and spectral sequences, we proved that \textit{A Hessian manifold admits a global potential if its fundamental group is finite} (Theorem~\ref{global_potential}). However, foundational results by Shima and Yagi \cite{1} in 1997 established that complete Hessian manifolds are aspherical, as their universal coverings are convex domains, which then immediately implies that \textit{the fundamental group of a compact Hessian manifold is infinite and torsion-free} (Corollary~\ref{infty_pi_1} and Proposition~\ref{torsion_free}). This contrast brings a historically significant class of Hessian manifolds into sharp focus: those of \emph{Koszul type}, which admit global 1-form potentials.

Results from the literature \cite{7} manifest that a compact affine manifold admits a Hessian metric of Koszul type if and only if its universal covering is developed into a salient convex cone. Compact affine manifolds of this kind are traditionally said to be hyperbolic. Utilizing tools from convex analysis, we prove a central structural result: \textit{any compact Hessian manifold with dimension no greater than $6$ is finitely covered by the product of a flat torus and a hyperbolic affine manifold} (Theorem~\ref{convex_splitting}):

\begin{theorem*}
Let $((M,\nabla),h)$ be a compact Hessian manifold of dimension $n\leq 6$. Then there exist a finite covering map $f\colon E\rightarrow M$ and a compact hyperbolic affine manifold $(B,\nabla_B)$ of dimension $k\leq n$ such that 
\begin{enumerate}
    \item [1.] $(E,f^*\nabla)$ is the product of affine manifolds $(B,\nabla_B)$ and $(\mathbb{T}^{n-k},D)$;
    \item [2.] $f^*h$ is the direct product of Riemannian metric $g$ and $\delta$,
\end{enumerate}
where $g$ here is a Hessian metric on $(B,\nabla_B)$, and $D$ is the Levi-Civita connection of the flat torus $(\mathbb{T}^{n-k},\delta)$.
\end{theorem*}

This result is complemented by powerful fibration theorems, which state that 
\textit{any compact orientable Hessian manifold must be a mapping torus or a Bieberbach manifold} (Proposition~\ref{mapping_tori_2}) and 
\textit{any compact hyperbolic affine manifold must admit a fibration over $\mathbb{S}^1$ with periodic monodromy} (Corollary~\ref{hyperbolic_affine_fibering}):

\begin{theorem*}
Let $(M,\nabla)$ be a compact oriented hyperbolic affine manifold. Then there exists a fiber bundle $f\colon M\rightarrow\mathbb{S}^1$ with connected and oriented fiber, such that the monodromy of the mapping torus $f\colon M\rightarrow\mathbb{S}^1$ is periodic.
\end{theorem*}

Finally, as one of the highlights of this article, we apply these general constraints to low dimensions. We provide the first \textit{topological classification of all complete Hessian surfaces} (Theorem~\ref{surface_classification}) and \textit{topological classification of closed orientable Hessian 3-manifolds} (Theorem~\ref{3-dim_classification}):

\begin{theorem*}
Let $((M,\nabla),h)$ be a complete Hessian surface. Then $M$ is diffeomorphic to a complete flat Riemannian manifold of dimension two.
\end{theorem*}

\begin{theorem*}
Let $((M,\nabla),h)$ be a closed orientable Hessian $3$-manifold. Then there exists a periodic mapping class $[\varphi]\in\operatorname{Mod}(\Sigma_g)$ such that $M$ is diffeomorphic to the mapping torus of $\varphi$, where $\Sigma_g$ is a closed orientable surface of genus $g\geq1$.
\end{theorem*}

Conversely, we also construct explicit examples, to show that indeed all $3$-manifolds mentioned above do admit Hessian structures (Proposition~\ref{example_g>1} and Proposition~\ref{example_g=1}).

After that, we also derive severe restrictions on the topology of closed orientable Hessian 4-manifolds, essentially deciding their intersection forms, and proving that they are either spin or Riemannian flat.

As an easy corollary of our fibration theorems, our work confirms Chern's longstanding 1955 conjecture in this particular setting by proving that the \textit{Euler characteristic of any compact Hessian manifold must vanish} (Proposition~\ref{Euler}).

In the very last section of this article, we initiating the study of the differential and algebraic geometry of closed Hessian $3$-manifolds. We conclude this article by establishing a profound equivalence between the canonical Cheng-Yau metrics and cyclic Higgs bundles.

While our primary lens is topological, we also explore necessary analytical and geometric tools, including the Cheng-Yau metric, and a novel duality between Koszul type and radiant Hessian manifolds, resembling the classical Legendre transform in theoretical mechanics. The general theory of canonical classes and flat line bundles for Hessian manifolds are also developed during our investigation process.

\subsection{Affine and Hessian Manifolds}

Throughout this article, all smooth manifolds are tacitly assumed to be connected and without boundary.

For each $i \in\{1, \ldots, n\}$, denote by $\pi^i: \mathbb{R}^n \rightarrow \mathbb{R}$ the projection onto the $i$-th coordinate.

Let $M$ be an $n$-dimensional differentiable manifold. As usual, for any chart $(U, \varphi)$ of $M$, the smooth functions $x^1, \ldots, x^n \in C^\infty(U)$ defined by 
$$
x^i: U \longrightarrow \mathbb{R}, \quad p \longmapsto \pi^i(\varphi(p))
$$
are said to be the local coordinates associated to the chart $(U, \varphi)$. An atlas $$\mathscr{A} = \left\{\left(U_\alpha, \varphi_\alpha\right) \mid \alpha \in I\right\}$$ of $M$ is said to be an affine atlas of $M$, if for any $\alpha, \beta \in I$ there exist $A \in \operatorname{GL}_n(\mathbb{R})$ and $\bf{a} \in$ $\mathbb{R}^n$ such that 
$$\varphi_\beta\left(\varphi_\alpha^{-1}(\bf{x})\right)=A {\bf{x}}+{\bf{a}}$$
for all ${\bf{x}} \in \varphi_\alpha\left(U_\alpha \cap U_\beta\right)$. An element of an affine atlas of $M$ is then called an affine chart of $M$, and the local coordinates associated to an affine chart of $M$ are termed affine coordinates.

For any two affine atlases $\mathscr{A}_1$ and $\mathscr{A}_2$ of $M$, we write $\mathscr{A}_1 \sim \mathscr{A}_2$ if and only if $\mathscr{A}_1 \cup \mathscr{A}_2$ is also an affine atlas of $M$. An affine atlas of $M$ is said to be maximal, if for any other affine atlas $\mathscr{A}^{\prime}$ of $M$ we have that $\mathscr{A}^{\prime} \sim \mathscr{A}$ implies $\mathscr{A}^{\prime} \subseteq \mathscr{A}$. Traditionally, a maximal affine atlas on $M$ is also termed an affine structure of the smooth manifold $M$.

Recall that a connection on a vector bundle over $M$ is said to be flat, if its curvature 2-form vanishes identically.

Consider an arbitrary connection $D$ on the tangent bundle $T M \rightarrow M$. By Cartan-Ambrose-Hicks theorem, if $D$ is flat and torsion-free then there exists a maximal affine atlas $\mathscr{A}_D:=\left\{\left(U_\alpha, \varphi_\alpha\right) \mid \alpha \in I\right\}$ of $M$, such that 
$$D (d x_\alpha^1)=\cdots=D (d x_\alpha^n)=0$$
for all $\alpha \in I$, where $x_\alpha^1, \ldots, x_\alpha^n$ are the local coordinates associated to the chart $\left(U_\alpha, \varphi_\alpha\right)$. Moreover, the map $[D] \longmapsto \mathscr{A}_D$ is a well-defined bijection from the set of all gauge equivalent classes of flat and torsion-free connections on the tangent bundle $T M \rightarrow M$ to the collection of all maximal affine atlases of $M$, and the atlas $\mathscr{A}_D$ is termed the \emph{adapted affine atlas} of $D$. Therefore, fixing an affine structure on $M$ is equivalent to specifying a gauge equivalent class of flat and torsion-free connection on the tangent bundle $T M \rightarrow M$. In view of this fact, by a slightly abuse of notation we also refer to a flat and torsion-free connection on the tangent bundle $T M \rightarrow M$ as an \emph{affine structure} of $M$, and refer to a pair $(M, \nabla)$ as an affine manifold, if $\nabla$ is a flat and torsion-free connection on the tangent bundle $TM\rightarrow M$. 

\begin{remark}
All affine manifolds are in particular analytic, as their transition functions are by definition real analytic mappings.
\end{remark}

On an affine manifold, there exist two important sheaves of Abelian groups: 

\begin{definition}
Let $(M, \nabla)$ be an affine manifold. The sheaf $\mathcal{A}:=\mathcal{A}_M$ of affine functions on $(M, \nabla)$ is defined by $$\mathcal{A}(U):=\{u\in C^\infty(U)\mid \nabla (du)=0\}$$ where $U$ is any open subset of $M$. Sections of $\mathcal{A}$ are termed affine functions.    
\end{definition}

\begin{definition}
Let $(M, \nabla)$ be an affine manifold. We denote by $\mathcal{P}:=\mathcal{P}_M$ the sheaf of parallel 1-forms on $(M, \nabla)$. 
\end{definition}

Recall from differential geometry that, on an $n$-dimensional affine manifold, the sheaf of parallel 1-forms is locally constant of rank $n$.

Moreover, since $\nabla$ is always assumed to be torsion-free, the exterior differential $d\omega$ of a $1$-form $\omega$ is the anti-symmetrization of the covariant derivative $\nabla\omega$. Therefore, in particular, on an affine manifold, all parallel 1-forms are closed.

We shall also recall the following standard definition of Hessian manifolds:

\begin{definition}
Let $(M, \nabla)$ be an affine manifold. A Riemannian metric $h$ on $M$ is called a Hessian metric on $(M, \nabla)$, if for any point $p\in M$, there exists an open neighborhood $U$ of $p$ in $M$, such that $h|_U=\nabla(df)$ for some $f\in C^\infty(U)$; or equivalently, the $(0,3)$-tensor $\nabla h$ is totally symmetric. A pair $((M, \nabla),h)$ is termed a Hessian manifold, if $h$ is a Hessian metric on the affine manifold $(M, \nabla)$.
\end{definition}

As Hessian manifolds are affine, we first present a theorem on topological invariants of affine manifolds, which of course applicable to all Hessian manifolds.

\begin{proposition}\label{Pontryagin}
All Pontryagin classes of an affine manifold must vanish. 
\end{proposition}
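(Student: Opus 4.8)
The plan is to invoke Chern–Weil theory together with the flatness of $\nabla$. By definition an affine manifold $(M,\nabla)$ carries a flat, torsion-free connection on the tangent bundle $TM\to M$; in particular $TM$ admits a connection whose curvature $2$-form vanishes identically. Interpreting the Chern classes of $M$ as those of the complexified tangent bundle $TM\otimes_{\mathbb{R}}\mathbb{C}$, I would first note that $\nabla$ induces a flat connection on $TM\otimes_{\mathbb{R}}\mathbb{C}$, again with vanishing curvature.

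Next I would recall the Chern–Weil construction: for any connection $D$ on a real or complex vector bundle with curvature $\Omega$, every Pontryagin form and every Chern form is obtained by applying an $\mathrm{Ad}$-invariant homogeneous polynomial to $\Omega$, and the resulting closed forms represent the real Pontryagin classes and real Chern classes respectively, independently of the choice of $D$. Applying this to the flat connection $\nabla$ (resp. its complexification), every such invariant polynomial is evaluated on $\Omega=0$, so all the representing forms vanish. Hence the images of all Pontryagin and Chern classes in the de Rham cohomology $H^{*}(M;\mathbb{R})$ are zero.

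It then remains to pass from real to rational coefficients. By the universal coefficient theorem there is a natural isomorphism $H^{*}(M;\mathbb{R})\cong H^{*}(M;\mathbb{Q})\otimes_{\mathbb{Q}}\mathbb{R}$, under which the comparison map $H^{*}(M;\mathbb{Q})\to H^{*}(M;\mathbb{R})$ given by $x\mapsto x\otimes 1$ is injective. Since the rational Pontryagin class (resp. rational Chern class) maps to the corresponding real class, which we have just shown is zero, injectivity forces the rational class itself to vanish.

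The only genuine subtlety — and the point I would take care to flag — is precisely this coefficient distinction: Chern–Weil theory detects only real cohomology, so the argument says nothing about possible torsion in the integral classes $p_i,c_i\in H^{*}(M;\mathbb{Z})$. It establishes exactly the vanishing of the \emph{rational} classes asserted in the statement, and no more; there is no hard analytic or geometric obstacle beyond keeping this bookkeeping straight.
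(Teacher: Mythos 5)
Your proposal is correct and follows exactly the same route as the paper: apply Chern--Weil theory to the flat connection $\nabla$ (and its complexification) to kill the real characteristic classes, then descend to rational coefficients by injectivity of $H^*(M;\mathbb{Q})\to H^*(M;\mathbb{R})$. The paper compresses all of this into two sentences, so your version is simply a fully written-out account of the same argument, with the coefficient bookkeeping correctly handled.
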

\begin{proof}
Note that the tangent bundle of an affine manifold admits a connection of which curvature vanishes identically. The proof of the desired result is then a direct application of the Chern–Weil construction. 
\end{proof}

It is also worth noting the following famous conjecture of Chern.

\begin{conjecture}[Chern,1955]\label{Chern}
The Euler characteristic of a compact affine manifold is zero. 
\end{conjecture}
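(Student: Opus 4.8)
The plan is first to isolate precisely why this statement is far harder than Proposition~\ref{Pontryagin}, and only then to describe the regime in which a complete argument is available. The Pontryagin and rational Chern classes vanish because, via Chern--Weil, they are represented by $\mathrm{GL}_n(\mathbb{R})$-invariant polynomials in the curvature of \emph{any} connection on $TM$; feeding in the flat connection $\nabla$ makes every such representative identically zero. The Euler class is of an entirely different nature: it is defined only after reducing the structure group to $SO(n)$, i.e. after choosing a Riemannian metric, and its Chern--Weil representative is the Pfaffian of the curvature of a \emph{metric} connection. The flat affine connection $\nabla$ is in general not the Levi-Civita connection of any metric, so one cannot plug its vanishing curvature into the Pfaffian. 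This is the structural reason Chern's conjecture does not reduce to the computation used for Proposition~\ref{Pontryagin}, and it is the heart of the difficulty.

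Given this, the approach I would take is to attack the \emph{complete} case first, following the circle of ideas of Kostant and Sullivan, and then to explain where the general case breaks down. For a complete affine manifold the developing map is a diffeomorphism, so $M$ is affinely identified with a quotient $\mathbb{R}^n/\Gamma$, where $\Gamma\subset\mathrm{Aff}(\mathbb{R}^n)$ acts freely and properly discontinuously with compact quotient. Writing each $\gamma\in\Gamma$ as $x\mapsto A_\gamma x + b_\gamma$, freeness forces $(A_\gamma - I)x = -b_\gamma$ to have no solution for every $\gamma\neq\mathrm{id}$, and hence $A_\gamma - I$ to be singular; thus $1$ is an eigenvalue of every linear holonomy. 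The key step is then to upgrade this pointwise eigenvalue condition into a globally defined nowhere-vanishing object: one analyses the Zariski closure of the linear holonomy $\{A_\gamma : \gamma\in\Gamma\}$ to produce a $\Gamma$-invariant vector field (or invariant flow) on $\mathbb{R}^n$ that descends to a nonvanishing vector field on $M$. By the Poincaré--Hopf theorem this yields $\chi(M)=0$.

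The step I expect to be the genuine obstacle is the passage from the complete to the general case, and here I must be candid: as worded, for an \emph{arbitrary} compact affine manifold this statement is Chern's 1955 conjecture and remains \textbf{open}. Without completeness there is no uniformisation $M\cong\mathbb{R}^n/\Gamma$, the developing map may fail to be injective or surjective, and the vector-field construction above has no substitute. The strongest unconditional progress I am aware of is, in dimension two, the results of Benz\'ecri and Milnor, and, in the presence of a parallel volume form (equivalently, linear holonomy in $\mathrm{SL}_n^{\pm}(\mathbb{R})$), the theorem of Klingler obtained via measurable group cohomology and a refined Gauss--Bonnet--Chern argument. For this reason I would state the general assertion as a conjecture and rigorously establish only those specialisations reachable by the vector-field and volume-form methods, deferring the fully general case as the principal unresolved difficulty.
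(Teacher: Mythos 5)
You are right to treat this statement as exactly what the paper labels it: Chern's 1955 conjecture, stated without proof, with Milnor's two-dimensional case and Klingler's parallel-volume case cited as the known partial results — so your refusal to claim a general proof matches the paper's stance, and your sketch of the complete case is precisely the Kostant--Sullivan theorem the paper invokes later (Theorem~\ref{complete_affine}). The one ingredient absent from your survey is Fried's half-space theorem (Theorem~\ref{half_space}), which is how the paper's Proposition~\ref{Euler} confirms the conjecture for compact \emph{Hessian} manifolds: by Shima--Yagi (Theorem~\ref{Shima_convex_thm}) the developing image is a convex domain, hence either all of $\mathbb{R}^n$ (your complete case) or contained in a half-space (Fried's case), and this dichotomy covers the incomplete situation that you correctly identified as the obstacle in general.
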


Chern's conjecture is known to hold in various cases. For example, the 2-dimensional case is settled by John Milnor \cite{26} in 1957. Moreover, in \cite{25}, Bruno Klingler proved that if a compact affine manifold admits a parallel nowhere vanishing top differential form then its Euler characteristic is zero. In later sections we will also confirm this conjecture for all compact Hessian manifolds.

\subsection{Concepts from Information Geometry}

In this short section, we list several preliminary results that will become useful later.

We first recall the core concept of information geometry, that is, the notion of dual connections.

\begin{definition}
Let $(M,g)$ be a pseudo-Riemannian manifold. For a connection $D$ on the tangent bundle $TM\rightarrow M$, its conjugate connection $D^*$ with respect to $g$ is defined to be the unique affine connection on the tangent bundle of $M$ such that the equation
$$
Z g(X, Y)=g\left(D_Z X, Y\right)+g\left(X, D_Z^* Y\right)
$$
holds for all smooth vector fields $X, Y, Z $ on $M$.
\end{definition}

See \cite{6} for a proof of the fact that the conjugate connection $D^*$ is well-defined and that $D^{**}=D$ for every connection $D$ on the tangent bundle $TM\rightarrow M$.

\begin{theorem}\label{Hessian_basic_thm}
Let $(M, \nabla)$ be an affine manifold, and let $g$ be a pseudo-Riemannian metric on $(M, \nabla)$. Then $g$ is a Hessian metric on $(M, \nabla)$ if and only if $\nabla^*$ is also flat and torsion-free, or equivalently, the $(0,3)$-tensor $\nabla g$ is totally symmetric. Moreover, in this case $$\bar{\nabla}:=\frac{1}{2}\left(\nabla^*+\nabla\right)$$ is the Levi-Civita connection of $(M, g)$.
\end{theorem}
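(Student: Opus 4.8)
The plan is to work entirely in local $\nabla$-affine coordinates $x^1,\dots,x^n$, in which the Christoffel symbols of $\nabla$ vanish, and to translate each of the three conditions into a statement about $g_{ij}:=g(\partial_i,\partial_j)$. Since $\nabla$ is the coordinate connection, the covariant derivative reads $(\nabla g)_{kij}=\partial_k g_{ij}$, so $\nabla g$ is automatically symmetric in $i,j$ and is totally symmetric precisely when $\partial_k g_{ij}=\partial_i g_{kj}$ for all indices. On the other hand, feeding $Z=\partial_k$, $X=\partial_i$, $Y=\partial_j$ into the defining relation of the conjugate connection and using $\nabla_{\partial_k}\partial_i=0$ yields $\partial_k g_{ij}=\Gamma^{*l}_{kj}\,g_{il}$, whence $\Gamma^{*m}_{kj}=g^{im}\partial_k g_{ij}$. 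Because $g$ is nondegenerate, $\nabla^*$ is torsion-free (its Christoffel symbols being symmetric in the lower indices) if and only if $\partial_k g_{ij}=\partial_j g_{ik}$, which together with the symmetry in $i,j$ is exactly the total symmetry of $\nabla g$. This already delivers the equivalence of total symmetry of $\nabla g$ with torsion-freeness of $\nabla^*$.

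Next I would establish that $g$ is Hessian if and only if $\nabla g$ is totally symmetric, by a two-step application of Poincar\'e's lemma on a simply connected coordinate patch. If $g_{ij}=\partial_i\partial_j f$ locally, then $\partial_k g_{ij}=\partial_k\partial_i\partial_j f$ is manifestly totally symmetric. Conversely, assuming $\partial_k g_{ij}=\partial_i g_{kj}$, each $1$-form $\alpha_j:=g_{ij}\,dx^i$ is closed, so there exist functions $f_j$ with $\partial_i f_j=g_{ij}$; the symmetry $g_{ij}=g_{ji}$ then forces $\partial_k f_j=g_{kj}=\partial_j f_k$, so the $1$-form $f_j\,dx^j$ is itself closed and equals $df$ for some $f$, giving $\partial_i\partial_j f=\partial_i f_j=g_{ij}$. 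Hence $g$ is locally a Hessian metric. This is the routine part; the only care needed is to note that the integration constants in the $f_j$ do not affect $\partial_k f_j=g_{kj}$, so the second Poincar\'e step proceeds unobstructed.

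It remains to upgrade torsion-freeness of $\nabla^*$ to the full statement that $\nabla^*$ is flat and torsion-free, and to identify $\bar\nabla$. I expect the flatness of $\nabla^*$ to be the main obstacle, and I would dispatch it in either of two ways. The cleanest structural route is the curvature-duality identity $g(R^\nabla(X,Y)Z,W)=-\,g(Z,R^{\nabla^*}(X,Y)W)$, valid for any connection and its $g$-conjugate; since $\nabla$ is flat and $g$ is nondegenerate, this forces $R^{\nabla^*}\equiv 0$, so flatness of $\nabla^*$ is automatic and the condition of being flat and torsion-free collapses to mere torsion-freeness throughout. Alternatively, once $g$ is known to be Hessian with local potential $f$, I would exhibit the Legendre-dual coordinates $x^*_i:=\partial_i f$ --- nondegeneracy of $(g_{ij})$ makes these a genuine chart --- and verify that $\nabla^*$ has vanishing Christoffel symbols in them, which reproves flatness and foreshadows the duality developed in the Legendre-transformation section. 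Finally, for the Levi-Civita claim I would average the defining relation for $\nabla^*$ with the same relation for $(\nabla^*)^*=\nabla$ to obtain $Z\,g(X,Y)=g(\bar\nabla_Z X,Y)+g(X,\bar\nabla_Z Y)$, so $\bar\nabla g=0$; since torsion depends affine-linearly on the connection, $T_{\bar\nabla}=\tfrac12(T_\nabla+T_{\nabla^*})=0$, and by uniqueness of the metric torsion-free connection, $\bar\nabla$ is the Levi-Civita connection of $(M,g)$.
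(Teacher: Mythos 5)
Your proposal is correct, but note that the paper itself does not prove this theorem: its ``proof'' is a one-line citation to Shima's \emph{Geometry of Hessian Structures} \cite{7}, so there is no argument in the text to compare against. What you have written is a complete, self-contained reconstruction of the standard proof from that reference: working in $\nabla$-affine coordinates, the identity $\Gamma^{*m}_{kj}=g^{im}\partial_k g_{ij}$ correctly reduces torsion-freeness of $\nabla^*$ to total symmetry of $\partial_k g_{ij}$, and your two-step Poincar\'e argument (first producing the $f_j$ with $\partial_i f_j=g_{ij}$, then integrating $f_j\,dx^j$ using $g_{ij}=g_{ji}$) correctly closes the loop with the Hessian condition. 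Your observation that flatness of $\nabla^*$ is automatic---via the duality $g(R^{\nabla}(X,Y)Z,W)=-g(Z,R^{\nabla^*}(X,Y)W)$ and nondegeneracy of $g$---is exactly the right way to see why the statement ``flat and torsion-free'' collapses to ``torsion-free,'' and it is a point the paper's phrasing leaves implicit. The identification of $\bar\nabla$ is likewise sound: averaging the two duality relations gives $\bar\nabla g=0$ (this uses $(\nabla^*)^*=\nabla$, which holds by symmetry of $g$), and torsion is indeed affine in the connection, so $T_{\bar\nabla}=\tfrac12(T_\nabla+T_{\nabla^*})=0$. The only stylistic remark is that your Legendre-coordinate alternative for flatness, while valid, duplicates the curvature-duality route and could be omitted; it does, however, connect naturally to Theorems \ref{Legendre_1} and \ref{Legendre_2} later in the paper.
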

\begin{proof}
    See \cite{7} for a proof of this statement.
\end{proof}

\begin{lemma}\label{flat_sharp}
Let $(M,g)$ be a pseudo-Riemannian manifold, and let $ \nabla $ be a connection on the tangent bundle $TM\rightarrow M$. 
Then for any 1-form $ \theta \in \Omega^1(M) $ and any vector field $X$ on $M$, it holds that $\nabla_X (\theta^\sharp)=(\nabla^*_X \theta)^\sharp$, and in particular
$\nabla^* \theta = 0$ if and only if $\nabla (\theta^\sharp) = 0$, where $\sharp$ is the musical isomorphism.
\end{lemma}
\begin{proof}
Take arbitrary vector fields $ X,Y $ on $M$. Then
$$
(\nabla^*_X \theta)(Y) = X (\theta(Y)) - \theta(\nabla^*_X Y).
$$
Now, since $ \theta(Y) = g(\theta^\sharp, Y) $, we have
\begin{align*}
(\nabla^*_X \theta)(Y) = X ( g(\theta^\sharp, Y)) - g(\theta^\sharp, \nabla^*_X Y).
\end{align*}
Moreover, since
$$
X (g(\theta^\sharp, Y)) = g(\nabla_X \theta^\sharp, Y) + g(\theta^\sharp, \nabla^*_X Y),
$$
substituting this into the previous expression, we obtain
\begin{align*}
(\nabla^*_X \theta)(Y) = (g(\nabla_X \theta^\sharp, Y) + g(\theta^\sharp, \nabla^*_X Y) ) - g(\theta^\sharp, \nabla^*_X Y) = g(\nabla_X \theta^\sharp, Y).
\end{align*}
This proves that
$\nabla_X \theta^\sharp=(\nabla^*_X \theta)^\sharp$
holds for all vector fields $X$ on $M$.
\end{proof}

The following notion is also classical in information geometry, which will turn out to be useful later.

\begin{definition}
The Amari–Chentsov tensor $A$ of a Hessian manifold $((M,\nabla),h)$ is defined to be the $(0,3)$-tensor satisfying $$A(X,Y,Z)=(\nabla_Zh)(X,Y)=h(\nabla^*_XY-\nabla_XY,Z)$$ for all vector fields $X,Y,Z$ on $M$.
\end{definition}

\section{Topological Generalities}

\subsection{Methods of Sheaf Cohomology}

In this section, we study the very natural question that in which case a Hessian manifold $((M, \nabla),h)$ may admit a global potential $f\in C^\infty(M)$ such that $h=\nabla(df)$. Before we can give an answer to this question, we need to state and prove the following two lemmata from algebraic topology and sheaf theory.

\begin{lemma}\label{vanishing_lemma}
Let $(M, \nabla)$ be an affine manifold, and let $\mathcal{A}$ be the sheaf of affine functions on $(M,\nabla)$. If $M$ is simply connected, then the first sheaf cohomology group $\mathrm{H}^1(M;\mathcal{A})$ vanishes.
\end{lemma}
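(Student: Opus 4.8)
The plan is to realize $\mathcal{A}$ as the middle term of a short exact sequence of sheaves and then run the associated long exact cohomology sequence. First I would construct the sheaf morphism $d\colon\mathcal{A}\to\mathcal{P}$ sending an affine function $u$ to its differential $du$; this is well defined precisely because $u\in\mathcal{A}$ means $\nabla(du)=0$, that is, $du$ is a parallel $1$-form. The kernel of this morphism consists of the locally constant functions, so it is the constant sheaf $\mathbb{R}$. The crucial point is surjectivity: I would show that every parallel $1$-form $\omega$ is closed, using that $\nabla$ is torsion-free, so that
$$d\omega(X,Y)=(\nabla_X\omega)(Y)-(\nabla_Y\omega)(X)=0;$$
then the Poincaré lemma provides a local primitive $\omega=du$, and this primitive is automatically affine since $\nabla(du)=\nabla\omega=0$. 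This yields the short exact sequence
$$0\longrightarrow\mathbb{R}\longrightarrow\mathcal{A}\xrightarrow{\;d\;}\mathcal{P}\longrightarrow 0$$
of sheaves of abelian groups on $M$.

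Next I would extract from this the long exact sequence in sheaf cohomology,
$$\cdots\longrightarrow\mathrm{H}^1(M;\mathbb{R})\longrightarrow\mathrm{H}^1(M;\mathcal{A})\longrightarrow\mathrm{H}^1(M;\mathcal{P})\longrightarrow\cdots,$$
and control the two outer terms using the hypothesis that $M$ is simply connected. Since $M$ is simply connected, $\mathrm{H}^1(M;\mathbb{R})=0$ (equivalently $H_1(M)=0$). For the term $\mathrm{H}^1(M;\mathcal{P})$, I would invoke the fact recalled earlier that $\mathcal{P}$ is a locally constant sheaf of rank $n$; over a simply connected base its monodromy representation is trivial, so $\mathcal{P}$ is isomorphic to the constant sheaf $\mathbb{R}^n$, whence $\mathrm{H}^1(M;\mathcal{P})\cong\mathrm{H}^1(M;\mathbb{R})^{\oplus n}=0$.

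Feeding these two vanishing statements into the long exact sequence sandwiches $\mathrm{H}^1(M;\mathcal{A})$ between two zero groups, forcing $\mathrm{H}^1(M;\mathcal{A})=0$, as claimed. I expect the only genuinely substantive step to be establishing the surjectivity of $d\colon\mathcal{A}\to\mathcal{P}$, namely verifying that parallel $1$-forms are closed and therefore locally exact with affine primitive; everything else is the formal machinery of the long exact sequence together with the triviality of local systems over a simply connected space.
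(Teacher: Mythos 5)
Your proof is correct and follows essentially the same route as the paper: the short exact sequence $0\to\mathbb{R}\to\mathcal{A}\xrightarrow{d}\mathcal{P}\to 0$, the identification of $\mathcal{P}$ with the constant sheaf $\underline{\mathbb{R}}^n$ via trivial monodromy, and the resulting long exact sequence sandwiching $\mathrm{H}^1(M;\mathcal{A})$ between vanishing terms. You simply spell out the surjectivity of $d$ (parallel forms are closed, hence locally exact with affine primitive) in more detail than the paper does.
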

\begin{proof}
Since the holonomy of $(M,\nabla)$ is trivial and the sheaf $\mathcal{P}$ of parallel 1-forms on $M$ is a locally constant sheaf of rank $n$, by abuse of notation we may identify $\mathcal{P}$ with $\underline{\mathbb{R}}^n$. It is then readily seen that there exists a short exact sequence
$$0\longrightarrow\mathbb{R}\longrightarrow\mathcal{A}\overset{d}{\longrightarrow}\mathbb{R}^{\oplus n}=\mathcal{P}\longrightarrow0$$
of sheaves, which induces a long exact sequence 
$$\cdots\longrightarrow\mathrm{H}^1(M;\mathbb{R})\longrightarrow\mathrm{H}^1(M;\mathcal{A})\longrightarrow\mathrm{H}^1(M;\mathbb{R})^{\oplus n}\longrightarrow\cdots$$
of cohomology groups. But since $M$ is simply connected, we have that the first de Rham cohomology group of $M$ vanishes. Therefore, we obtain $\mathrm{H}^1(M;\mathcal{A})=0$.
\end{proof}

\begin{lemma}\label{spectral_seq}
Let $(M,\nabla)$ be an affine manifold, and let $\mathcal{A}$ be the sheaf of affine functions on $(M,\nabla)$. Suppose that $\pi\colon \tilde{M}\rightarrow M$ is the universal covering. Then $\pi^{-1}\mathcal{A}$ is the sheaf of affine functions on the affine manifold $(\tilde{M},\pi^*\nabla)$, and there exists a natural isomorphism
$$\mathrm{H}^1(M;\mathcal{A})\cong\mathrm{H}^1(G;\mathrm{H}^0(\tilde{M};\pi^{-1}\mathcal{A}))$$
of $\mathbb{R}$-vector spaces, where $G:=\pi_1(M)$ is the fundamental group of $M$.
\end{lemma}
\begin{proof}
It is routine to check that $(\tilde{M},\pi^*\nabla)$ is an affine manifold and $\pi^{-1}\mathcal{A}$ is its sheaf of affine functions.

To prove the existence of the natural isomorphism, we use Cartan-Leray spectral sequence, of which $E_2$-page terms are $$ E_2^{p,q} = \mathrm{H}^p(G, \mathrm{H}^q(\tilde{M};\pi^{-1}\mathcal{A} )) \implies \mathrm{H}^{p+q}(M;\mathcal{A}).$$
Recall from homological algebra that the following sequence
$$0\longrightarrow E^{1,0}_2\longrightarrow \mathrm{H}^1(M;\mathcal{A})\longrightarrow E^{0,1}_2\longrightarrow E^{2,0}_2\longrightarrow\cdots$$
is exact. Since $\tilde{M}$ is simply connected, by Lemma~\ref{vanishing_lemma} $\mathrm{H}^1(\tilde{M};\pi^{-1}\mathcal{A} )=0$. Therefore we have that $$E^{0,1}_2=\mathrm{H}^0(G;0)=0.$$ This proves that $$\mathrm{H}^1(M;\mathcal{A})=E^{1,0}_2=\mathrm{H}^1(G, \mathrm{H}^0(\tilde{M};\pi^{-1}\mathcal{A} ))$$ as required.
\end{proof}

The following vanishing theorem of sheaf cohomology group turns out to be useful for our purpose:

\begin{proposition}\label{vanishing_lemma_2}
Let $(M, \nabla)$ be an affine manifold, and let $\mathcal{A}$ be the sheaf of affine functions on $(M,\nabla)$. If the fundamental group of $M$ is finite, then the first sheaf cohomology group $\mathrm{H}^1(M;\mathcal{A})$ vanishes.
\end{proposition}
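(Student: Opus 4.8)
The plan is to reduce the finite-fundamental-group case to the simply-connected case already handled in Lemma~\ref{vanishing_lemma}, using the group-cohomological description from Lemma~\ref{spectral_seq}. By that lemma we have a natural isomorphism
\begin{equation*}
\mathrm{H}^1(M;\mathcal{A})\cong\mathrm{H}^1\bigl(G;\mathrm{H}^0(\tilde{M};\pi^{-1}\mathcal{A})\bigr),
\end{equation*}
where $G=\pi_1(M)$ acts on the $\mathbb{R}$-vector space $V:=\mathrm{H}^0(\tilde{M};\pi^{-1}\mathcal{A})$ of globally defined affine functions on the universal cover. So it suffices to show that this group-cohomology term vanishes whenever $G$ is finite.

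First I would invoke the standard fact from homological algebra that for a finite group $G$ and any $\mathbb{Q}$-vector space (in particular any $\mathbb{R}$-vector space) $V$ on which $G$ acts linearly, the higher group cohomology $\mathrm{H}^i(G;V)$ vanishes for all $i\ge 1$. The quickest justification is that these groups are annihilated by multiplication by $|G|$ (the transfer/averaging argument: $|G|\cdot\mathrm{H}^i(G;V)=0$ for $i\ge1$), while at the same time $\mathrm{H}^i(G;V)$ is an $\mathbb{R}$-vector space because $V$ is, so multiplication by the nonzero scalar $|G|$ is an isomorphism; the only way both can hold is $\mathrm{H}^i(G;V)=0$. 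Applying this with $i=1$ gives $\mathrm{H}^1(G;V)=0$ directly.

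Combining the two observations, we conclude $\mathrm{H}^1(M;\mathcal{A})\cong\mathrm{H}^1(G;V)=0$, as desired. The only point requiring a little care is to confirm that the $G$-module $V$ genuinely carries an $\mathbb{R}$-vector-space structure compatible with the $G$-action (so that the averaging/transfer scalar is invertible); this is immediate since $\pi^{-1}\mathcal{A}$ is a sheaf of $\mathbb{R}$-vector spaces and the deck-transformation action of $G$ is $\mathbb{R}$-linear, hence the induced action on global sections and on cohomology is $\mathbb{R}$-linear. I do not anticipate a serious obstacle here: the entire content of the proposition is the reduction already performed in Lemma~\ref{spectral_seq} together with the elementary vanishing of the cohomology of a finite group with coefficients in a rational (here real) module. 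If one preferred to avoid even the transfer argument, an alternative is to note that $\mathbb{R}G$ is semisimple by Maschke's theorem (since $|G|$ is invertible in $\mathbb{R}$), so every $\mathbb{R}G$-module is projective and hence $\mathrm{H}^i(G;V)=0$ for $i\ge1$; either route closes the argument.
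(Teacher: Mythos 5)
Your proposal is correct and follows essentially the same route as the paper: reduce to group cohomology via Lemma~\ref{spectral_seq} and then observe that $\mathrm{H}^1(G;V)$ is simultaneously a torsion group (annihilated by $|G|$) and an $\mathbb{R}$-vector space, hence zero. You merely spell out the transfer/averaging justification that the paper leaves implicit.
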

\begin{proof}
Take any universal covering $\pi\colon \tilde{M}\rightarrow M$. Then by Lemma~\ref{spectral_seq}
$$\mathrm{H}^1(M;\mathcal{A})\cong\mathrm{H}^1(G;\mathrm{H}^0(\tilde{M};\pi^{-1}\mathcal{A}))$$
as $\mathbb{R}$-vector spaces, where $G:=\pi_1(M)$ is the fundamental group of $M$. But, recall from the theory of group cohomology that, the first cohomology of a finite group is a torsion group. We therefore conclude that $\mathrm{H}^1(G;\mathrm{H}^0(\tilde{M};\pi^{-1}\mathcal{A}))=0$.
\end{proof}

We are now ready to state and prove the main theorem in this section.
\begin{theorem}\label{global_potential}
Let $((M, \nabla),h)$ be a Hessian manifold. If the fundamental group of $M$ is finite, then there exists $f\in C^\infty(M)$ such that $h=\nabla(df)$.
\end{theorem}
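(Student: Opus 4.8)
The plan is to recognize that the obstruction to gluing the local Hessian potentials into a single global potential is precisely a class in $\mathrm{H}^1(M;\mathcal{A})$, and then to kill this class using Proposition \ref{vanishing_lemma_2}. The entire argument is an obstruction-theoretic reformulation of the definition of a Hessian metric.

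First I would invoke the definition of a Hessian metric to obtain an open cover $\{U_\alpha\}_{\alpha\in I}$ of $M$ together with local potentials $f_\alpha\in C^\infty(U_\alpha)$ satisfying $h|_{U_\alpha}=\nabla(df_\alpha)$. On each nonempty intersection $U_\alpha\cap U_\beta$ the difference $c_{\alpha\beta}:=f_\alpha-f_\beta$ satisfies $\nabla(d c_{\alpha\beta})=h-h=0$, so $c_{\alpha\beta}$ is an affine function, that is, a section of $\mathcal{A}$ over $U_\alpha\cap U_\beta$. A direct check gives $c_{\alpha\beta}=-c_{\beta\alpha}$ and $c_{\alpha\beta}+c_{\beta\gamma}=c_{\alpha\gamma}$ on triple overlaps, so $(c_{\alpha\beta})$ is a \v{C}ech $1$-cocycle and determines a class $[c]\in\check{\mathrm{H}}^1(M;\mathcal{A})$. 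Since $M$ is paracompact and Hausdorff, \v{C}ech cohomology agrees with sheaf cohomology, and I regard $[c]$ as an element of $\mathrm{H}^1(M;\mathcal{A})$.

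Next I would observe that $[c]$ is exactly the obstruction to the existence of a global potential. Indeed, if $[c]=0$ then, after passing to a refinement if necessary, there are affine functions $g_\alpha\in\mathcal{A}(U_\alpha)$ with $c_{\alpha\beta}=g_\alpha-g_\beta$, whence $f_\alpha-g_\alpha=f_\beta-g_\beta$ on overlaps; the locally defined functions $f_\alpha-g_\alpha$ therefore glue to a single $f\in C^\infty(M)$. Because each $g_\alpha$ is affine we have $\nabla(d g_\alpha)=0$, and consequently $\nabla(df)|_{U_\alpha}=\nabla(d f_\alpha)=h|_{U_\alpha}$, giving $h=\nabla(df)$ on all of $M$, as desired.

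Finally, to conclude, I would feed in the hypothesis that $\pi_1(M)$ is finite: by Proposition \ref{vanishing_lemma_2} this forces $\mathrm{H}^1(M;\mathcal{A})=0$, so $[c]=0$ automatically and the gluing above produces the global potential. The one point meriting genuine care — essentially the only subtlety — is the identification of \v{C}ech with derived-functor sheaf cohomology, so that the vanishing supplied by Proposition \ref{vanishing_lemma_2} actually applies to the \v{C}ech class $[c]$; this is standard for paracompact Hausdorff spaces, hence for manifolds, and one could alternatively phrase the whole obstruction intrinsically in terms of the short exact sequence $0\to\mathbb{R}\to\mathcal{A}\xrightarrow{d}\mathcal{P}\to0$ used in Lemma \ref{vanishing_lemma}. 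Everything else reduces to the routine cocycle bookkeeping above, so I anticipate no further obstacle.
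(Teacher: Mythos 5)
Your proposal is correct and follows essentially the same route as the paper: extract local potentials, form the \v{C}ech $1$-cocycle $f_\alpha-f_\beta$ with values in the sheaf $\mathcal{A}$ of affine functions, and kill its class via Proposition~\ref{vanishing_lemma_2}. You have in fact supplied more detail than the paper does (the cocycle verification, the explicit gluing, and the \v{C}ech-versus-derived-functor identification), all of which is accurate.
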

\begin{proof}
By the definition of a Hessian metric, there exists an open covering $\{U_\alpha\}_{\alpha\in I}$ of $M$ and for each $\alpha\in I$ a smooth function $f_\alpha\in C^\infty(U_\alpha)$, such that for each $\alpha\in I$ it holds that $h_\alpha=\nabla(df_\alpha)$, where  $h_\alpha$ is the restriction of $h$ on $U_\alpha$. 

For every $\alpha,\beta\in I$, define $g_{\alpha\beta}:=f_\alpha-f_\beta\in C^\infty(U_\alpha\cap U_\beta)$. Then $g=\{g_{\alpha\beta}\}_{\alpha,\beta\in I}$ is a \v{C}ech 1-cocycle with coefficients in the sheaf $\mathcal{A}$ of affine functions on $M$. It remains to prove that the \v{C}ech cohomology class $[g]\in\check{\mathrm{H}}^1(M;\mathcal{A})$ vanishes. But this immediately follows from Proposition~\ref{vanishing_lemma_2}.
\end{proof}

Before we proceed to the next section, also notice that we have the following obvious example for the non-existence of global potential for a pseudo-Riemannian metric.

\begin{proposition}\label{signature}
Let $(M,g)$ be a compact pseudo-Riemannian manifold, and let $\nabla$ be a connection on the tangent bundle $TM\rightarrow M$. Then $g\neq\nabla(df)$ for all $f\in C^\infty(M)$.
\end{proposition}
\begin{proof}
Take any $f\in C^\infty(M)$ such that the Hessian quadratic form $\nabla(df)$ of $f$ is everywhere non-degenerate. In particular $f$ is not a constant function. Since $M$ is compact, by the extreme value theorem, there exists distinct points $p,q\in M$ such that $f$ attains a global maximum at $p$ and attains a global minimum at $q$. But then by the second derivative test, the signatures of the Hessian quadratic form of $f$ must be different at points $p$ and $q$.
\end{proof}

Proposition~\ref{signature} leads immediately to an interesting result:

\begin{corollary}\label{infty_pi_1}
The fundamental group of a compact Hessian manifold is infinite.
\end{corollary}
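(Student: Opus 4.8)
The plan is to deduce Corollary~\ref{infty_pi_1} from the preceding results by contradiction, combining the global-potential theorem for finite fundamental group with the signature obstruction for compact pseudo-Riemannian manifolds. Suppose, for contradiction, that $((M,\nabla),h)$ is a compact Hessian manifold whose fundamental group $\pi_1(M)$ is finite. A Hessian metric is in particular a Riemannian, hence pseudo-Riemannian, metric on the compact manifold $M$, and the connection $\nabla$ on $TM$ induces in the usual way a connection on the cotangent bundle $T^*M\to M$ (equivalently, $\nabla(df)$ makes sense as the Hessian quadratic form of a function). Thus the hypotheses of Proposition~\ref{signature} are met.

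First I would invoke Theorem~\ref{global_potential}: since $\pi_1(M)$ is finite, there exists a single globally defined smooth function $f\in C^\infty(M)$ with $h=\nabla(df)$ on all of $M$. This is the crucial step that upgrades the merely local potentials appearing in the definition of a Hessian metric to a genuine global potential, and it is precisely where finiteness of the fundamental group is used. Note that $h$, being Riemannian, is everywhere positive-definite and in particular everywhere non-degenerate, so $f$ satisfies the non-degeneracy hypothesis of Proposition~\ref{signature}.

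Next I would apply Proposition~\ref{signature} directly to this $f$: it asserts that on a compact pseudo-Riemannian manifold no function can have the metric as its global Hessian, because the signature of the Hessian must differ between a global maximum and a global minimum of $f$ (both of which exist by compactness and the extreme value theorem). This contradicts the equation $h=\nabla(df)$ obtained in the previous step. Hence no compact Hessian manifold can have finite fundamental group.

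The argument is essentially immediate once the two earlier results are in hand, so there is no serious analytic obstacle; the only point requiring a word of care is the compatibility of the two statements, namely that the connection on $T^*M$ implicitly used in Proposition~\ref{signature} is the one induced by $\nabla$, so that $\nabla(df)$ means the same object in both places. Granting that bookkeeping, the contradiction forces $\pi_1(M)$ to be infinite, which is exactly the assertion of the corollary.
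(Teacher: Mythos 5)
Your proposal is correct and follows exactly the paper's own argument: assume $\pi_1(M)$ is finite, invoke Theorem~\ref{global_potential} to obtain a global potential $f$ with $h=\nabla(df)$, and derive a contradiction with Proposition~\ref{signature} via the extreme value theorem on the compact manifold $M$. The additional remark about matching the induced connection on $T^*M$ is a reasonable bookkeeping point but does not change the substance.
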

\begin{proof}
Let $h$ be a Hessian metric on affine manifold $(M, \nabla)$. Assume the contrary that $\pi_1(M)$ is a finite group. Then by Proposition~\ref{global_potential}, there exists $f\in C^\infty(M)$ such that $h=\nabla(df)$. But this contradicts Proposition~\ref{signature} as $M$ is assumed to be compact. 
\end{proof}

\begin{remark}\label{HW_mfd}
In \cite{2}, it is proved that there exists a unique closed orientable flat Riemannian 3-manifold with vanishing first Betti number, called the Hantzsche–Wendt manifold. Therefore, in particular, although by Corollary~\ref{infty_pi_1} a compact Hessian manifold admits an infinite fundamental group, its first Betti number can still be zero.
\end{remark}

However, in contrast, we have the following proposition:

\begin{proposition}\label{Hodge}
Let $(M,\nabla)$ be an affine manifold, and let $h$ be a Riemannian metric on $M$. Suppose that there exists $\eta\in\Omega^1(M)$ such that $h=\nabla\eta$. Then $\eta$ is a closed 1-form. Moreover, if it is assumed further that $M$ is compact, then $\eta$ is not exact, and in particular the first Betti number of $M$ is positive.
\end{proposition}
\begin{proof}
Since $\nabla$ is torsion-free, the exterior differential $d\eta$ is the anti-symmetrization of $\nabla\eta$. But the anti-symmetrization of $\nabla\eta$ is identically zero as $\nabla\eta=h$ is symmetric. Therefore, $\eta$ is a closed 1-form. 

Now, assume that $M$ is compact. By Hodge decomposition theorem, there exists a smooth function $f\in C^\infty(M)$ and a harmonic 1-form $\theta$ such that $\eta=df+\theta$. Since $$\nabla\theta=\nabla\eta-\nabla(df)=h-\nabla(df)\neq0$$ by Proposition~\ref{signature}, we conclude that $\theta\neq0$. Now it suffices to recall from Hodge theory that $\mathrm{H}^1(M;\mathbb{R)}$ is isomorphic to the space of the harmonic 1-forms on $M$.
\end{proof}

Proposition~\ref{Hodge} motivates the following definition.

\begin{definition}
Let $(M,\nabla)$ be an affine manifold, and $h$ a Riemannian metric on $M$. The pair $((M,\nabla),h)$ is termed a Hessian manifold of Koszul type, if there exists a $1$-form $\eta\in\Omega^1(M)$ such that $h=\nabla\eta$, and in this case, the 1-form $\eta$ is termed a $1$-form potential for $((M,\nabla),h)$.
\end{definition}

By Poincar\'e lemma, Hessian manifolds of Koszul type are indeed Hessian. As explained in the introduction section, this
is, actually, the ultimate origin of the notion of Hessian structures.

\subsection{More on Fundamental Group}

For each $n\in \mathbb{N}$, we denote by $\operatorname{Aff}_n(\mathbb{R})$ the group of affine transformations on $\mathbb{R}^n$.

\begin{definition}
A Hessian manifold $((M, \nabla),h)$ is said to be complete, if $h$ is a complete Riemannian metric on $M$.
\end{definition}

First, we recall the definition of affine mappings: For affine manifolds $(M_1,\nabla_1)$ and $(M_2,\nabla_2)$, a smooth mapping $\Phi:M_1\rightarrow M_2$ is said to be affine if $\hat{\nabla}(d\Phi)=0$, where $$\hat{\nabla}=\nabla_1\otimes \operatorname{Id}+\operatorname{Id}\otimes \Phi^*\nabla_2$$ is the induced connection on the tensor bundle $T^*M_1 \otimes \Phi^*TM_2$.  

Let $(M, \nabla)$ be an affine manifold. It is readily seen that, a vector-valued smooth function $$F\colon M \longrightarrow\mathbb{R}^m,\quad p\longmapsto(F_1(p),\dots,F_m(p))$$ is an affine mapping between $(M, \nabla)$ and the flat Euclidean space, if and only if $F_1,\dots,F_m$ are global affine functions on $M$.

For an affine manifold $(M,\nabla)$, since $\nabla$ is torsion-free, the exterior differential $d\omega$ of a $1$-form $\omega$ is the anti-symmetrization of the covariant derivative $\nabla\omega$. Therefore, on an affine manifold, all parallel 1-forms are closed.

Moreover, for an affine manifold $(M,\nabla)$ with finite fundamental group, parallel 1-forms admit no periods as the first de Rham cohomology group of $M$ vanishes. Therefore, the following notion of affine development is well-defined:

\begin{definition}
Let $(M,\nabla)$ be a simply connected $n$-dimensional affine manifold, and fix a reference point $b\in M$. The affine development of $(M,\nabla)$ is the smooth mapping $$\Phi\colon M\rightarrow \mathcal{P}(M)^\vee=\mathbb{R}^n$$ defined by 
$$
\langle\omega,\Phi(a)\rangle=\int_\gamma\omega
$$
for $\omega\in \mathcal{P}(M)$ and $a\in M$, where the bracket $\langle\cdot,\cdot\rangle$ denotes the canonical pairing between $\mathcal{P}(M)$ and its dual space $\mathcal{P}(M)^\vee$, and where $\gamma\colon[0,1]\rightarrow M$ is a smooth curve such that $\gamma(0)=b$ and $\gamma(1)=a$.
\end{definition}

See for example \cite{7} for a proof of the fact that an affine development is indeed an affine mapping from an affine manifold to the flat Euclidean space.

In \cite{1}, using the concept of affine development, H.Shima and K.Yagi proved the following important theorem on convexity of complete Hessian metrics.

\begin{theorem}[Shima-Yagi, 1997\cite{1}]\label{Shima_convex_thm}
Let $((M, \nabla),h)$ be a simply connected complete Hessian manifold. Then the affine development $\Phi\colon M\rightarrow\mathbb{R}^n$ of $(M, \nabla)$ is a diffeomorphism onto its image such that $\Phi(M)$ is a convex domain in $\mathbb{R}^n$.
\end{theorem}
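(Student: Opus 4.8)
The plan is to establish, in order, that the affine development $\Phi$ is a local diffeomorphism, that its image is convex, and finally that it is globally injective. Since $M$ is simply connected, in particular $\pi_1(M)$ is finite, so Theorem \ref{global_potential} furnishes a global potential $f\in C^\infty(M)$ with $h=\nabla(df)$. Because the affine development is an affine map (see \cite{7}) between equidimensional connected manifolds whose differential is everywhere invertible --- the parallel $1$-forms $\mathcal{P}(M)$ trivialise $T^*M$, and $d\langle\omega,\Phi\rangle=\omega$ for $\omega\in\mathcal{P}(M)$ --- the map $\Phi$ is a local diffeomorphism, and in the affine coordinates $x^1,\dots,x^n$ pulled back by $\Phi$ we may write $h=\sum_{i,j}(\partial_i\partial_j f)\,dx^i dx^j$ with $(\partial_i\partial_j f)$ positive definite. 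Thus $f$ is strictly convex along every $\nabla$-geodesic, that is, along every affine line segment read in these coordinates.

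The key device is to lift straight segments of $\mathbb{R}^n$ through $\Phi$ and control them by the potential. Given a segment $\sigma(t)=(1-t)\,\Phi(p)+t\,\Phi(q)$ with $p,q\in M$, local invertibility of $\Phi$ lets me lift $\sigma$ uniquely to an affine geodesic $\tilde\sigma$ in $M$ on a maximal interval $[0,t^\ast)$ with $\tilde\sigma(0)=p$. Setting $\phi(t)=f(\tilde\sigma(t))$, one computes
$$\ddot\phi(t)=h\big(\dot{\tilde\sigma}(t),\dot{\tilde\sigma}(t)\big)=\big|\dot{\tilde\sigma}(t)\big|_h^2>0,$$
so $\phi$ is strictly convex, and the $h$-length of the lift up to time $t$ equals $\int_0^t\sqrt{\ddot\phi}\,ds$. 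If this length stays finite as $t\to t^\ast$, then $\tilde\sigma$ is an $h$-Cauchy curve, hence convergent by completeness (Hopf--Rinow), and the local diffeomorphism property continues the lift past $t^\ast$, contradicting maximality; therefore either $t^\ast=1$ --- so $\sigma$ lifts entirely and $[\Phi(p),\Phi(q)]\subseteq\Phi(M)$ --- or the length diverges. By Cauchy--Schwarz,
$$\Big(\int_0^{t^\ast}\sqrt{\ddot\phi}\,ds\Big)^2\le t^\ast\int_0^{t^\ast}\ddot\phi\,ds=t^\ast\Big(\lim_{t\to t^\ast}\dot\phi(t)-\dot\phi(0)\Big),$$
so divergence of the length forces $\dot\phi\to+\infty$, and a finer comparison (the length can only diverge if $\ddot\phi\gtrsim(t^\ast-t)^{-2}$) shows $\phi=f\circ\tilde\sigma\to+\infty$ in finite affine time.

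The crux --- and the step I expect to be the main obstacle --- is to \emph{exclude} this blow-up of the potential when both endpoints $\Phi(p),\Phi(q)$ already lie in $\Phi(M)$, for this is exactly what yields convexity of $\Phi(M)$. Here I would exploit the strict convexity of $f$ by lifting $\sigma$ simultaneously from both ends: the forward lift from $p$ and the backward lift from $q$ produce convex functions with prescribed finite boundary data $f(p)$ and $f(q)$, and a monotonicity argument on their one-sided derivatives, combined with the completeness estimate above, should rule out an interior escape to infinity and force the two lifts to meet. Granting convexity of $\Phi(M)$, the image is in particular simply connected; the same completeness-driven lifting shows that $\Phi$ is a covering map onto its open image, and a covering of a simply connected space is a homeomorphism. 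Hence $\Phi$ is injective, and being a local diffeomorphism it is a diffeomorphism onto the convex domain $\Phi(M)$, as claimed.
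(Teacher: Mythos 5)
This is a quoted theorem: the paper itself gives no proof and simply cites Shima--Yagi \cite{1}, so there is no internal argument to compare against; the only question is whether your proposal stands on its own. Its skeleton --- global potential from simple connectivity, $\Phi$ a local diffeomorphism, lifting of segments, and the observation that a lift of finite $h$-length extends by completeness --- is sound and is indeed the standard frame for this theorem. But the step you yourself flag as ``the crux'' is not proved, and the two specific claims you lean on there do not hold. First, divergence of the length $\int_0^{t^*}\sqrt{\ddot\phi}\,dt$ does \emph{not} force $\phi\to+\infty$: take
\begin{equation*}
\ddot\phi(t)=\frac{1}{(t^*-t)^2\bigl(\log\tfrac{1}{t^*-t}\bigr)^2},
\end{equation*}
for which $\int\sqrt{\ddot\phi}\,dt=\int\frac{dt}{(t^*-t)\log\frac{1}{t^*-t}}$ diverges (substitute $u=\log\frac{1}{t^*-t}$), while $\dot\phi\sim\frac{1}{(t^*-t)\log^2\frac{1}{t^*-t}}$ is integrable, so $\phi$ stays bounded. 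Your asserted ``finer comparison'' $\ddot\phi\gtrsim(t^*-t)^{-2}$ is exactly what this example violates; divergence of the length integral gives no pointwise lower bound on $\ddot\phi$.

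Second, even if blow-up of $\phi$ were established, no contradiction follows from it: $f$ is a smooth function on a noncompact manifold and may perfectly well tend to $+\infty$ along a curve that leaves every compact set, and the endpoint $q$ imposes no constraint because the forward lift from $p$ and the backward lift from $q$ are a priori lifts through different ``sheets'' --- injectivity of $\Phi$ has not yet been proved, so there is no convexity inequality of the form $\phi(t)\le(1-t)f(p)+tf(q)$ available (using one would be circular). The ``monotonicity argument on one-sided derivatives'' is therefore a placeholder, not an argument, and what it is standing in for is precisely the mathematical content of the Shima--Yagi theorem. What \emph{is} true and would be the right thing to aim for is that $\dot\phi$ is increasing, so the lift fails to extend only if $\dot\phi=\langle\nabla f,\Phi(q)-\Phi(p)\rangle\to+\infty$ along the lift; excluding this requires genuinely new input (this is where Shima--Yagi's analysis of the potential along the developing map enters), and your proposal does not supply it. The final reduction (convex image $\Rightarrow$ simply connected $\Rightarrow$ the covering $\Phi$ is injective) is fine modulo this gap.
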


\begin{corollary}\label{Shima_convex_corllary}
Let $((M, \nabla),h)$ be a complete Hessian manifold, and let $\pi\colon \tilde{M}\rightarrow M$ be the universal covering of $M$. Then the image $\Omega:=\Phi(\tilde{M})$ of the affine development $\Phi\colon\tilde{M}\rightarrow\mathbb{R}^n$ of $(\tilde{M},\pi^*\nabla)$ is a convex domain in $\mathbb{R}^n$, and the fundamental group $\pi_1(M)$ acts freely and properly discontinuously on $\Omega$ by affine transformation so that $M=\Omega/\pi_1(M)$.
\end{corollary}

Theorem~\ref{Shima_convex_thm} provides a strong restriction on the topology of complete Hessian manifolds.

\begin{corollary}\label{aspherical}
Let $((M, \nabla),h)$ be a complete Hessian manifold. Then $M$ is aspherical, i.e. the homotopy group $\pi_k(M)$ vanishes for all $k\geq2$.
\end{corollary}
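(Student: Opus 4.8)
The plan is to pass to the universal covering and reduce the asphericity of $M$ to the contractibility of a convex domain via Theorem~\ref{Shima_convex_thm}. Recall that a connected manifold is aspherical precisely when its universal cover is contractible, since a covering projection induces isomorphisms $\pi_k(\tilde{M})\cong\pi_k(M)$ for all $k\geq2$. Thus it suffices to exhibit the universal cover of $M$ as contractible.

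First I would verify that any universal covering $\pi\colon\tilde{M}\rightarrow M$ carries the structure of a simply connected \emph{complete} Hessian manifold. The pulled-back connection $\pi^*\nabla$ is again flat and torsion-free, so $(\tilde{M},\pi^*\nabla)$ is affine; and since the Hessian condition is local and any local potential $f$ on an open set $U\subseteq M$ pulls back to a local potential $f\circ\pi$ on each sheet of $\pi^{-1}(U)$, the metric $\pi^*h$ is Hessian on $(\tilde{M},\pi^*\nabla)$. The one point requiring genuine care is completeness: the projection $\pi$ is a local isometry for $\pi^*h$, so geodesics of $M$ lift to geodesics of $\tilde{M}$, and the fact that every geodesic of the complete base extends to all of $\mathbb{R}$ forces the same for its lifts; by Hopf--Rinow, $(\tilde{M},\pi^*h)$ is complete. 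I expect this completeness-lifting step to be the main (though mild) obstacle, as it is the only place where a nontrivial standard fact from Riemannian geometry is invoked.

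With $(\tilde{M},\pi^*\nabla,\pi^*h)$ established as a simply connected complete Hessian manifold, Theorem~\ref{Shima_convex_thm} applies directly: its affine development $\Phi\colon\tilde{M}\rightarrow\mathbb{R}^n$ is a diffeomorphism onto a convex domain $\Phi(\tilde{M})\subseteq\mathbb{R}^n$. A convex subset of $\mathbb{R}^n$ is star-shaped and hence contractible, so this diffeomorphism exhibits $\tilde{M}$ as contractible. Consequently $\pi_k(\tilde{M})=0$ for every $k$, and therefore $\pi_k(M)\cong\pi_k(\tilde{M})=0$ for all $k\geq2$, which is precisely the asserted asphericity of $M$.
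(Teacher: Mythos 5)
Your proposal is correct and follows exactly the paper's route: pass to the universal cover, note that it is a simply connected complete Hessian manifold, apply Theorem~\ref{Shima_convex_thm} to identify it with a convex (hence contractible) domain, and conclude asphericity. The paper's proof is a two-line version of the same argument; your additional care about lifting completeness to the cover is a reasonable elaboration but not a different method.
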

\begin{proof}
Recall that being an aspherical space is equivalent to possessing a contractible universal cover. The desired result then follows immediately from Theorem~\ref{Shima_convex_thm}.
\end{proof}

Corollary~\ref{infty_pi_1} shows that compact Hessian manifolds have infinite fundamental group. We shall now improve the result in Corollary~\ref{infty_pi_1}.

\begin{lemma}\label{barycenter}
Let $\Omega$ be a convex domain in $\mathbb{R}^d$, and $G$ a group acting freely on $\Omega$. Then $G$ is torsion-free, that is, it contains no non-trivial element of finite order.
\end{lemma}
\begin{proof}
Suppose that $g\in G$ enjoys finite order $k\in\mathbb{N}$. Take any arbitrary point $\vec{x}\in\Omega$. Since $\Omega$ is convex and $g^1(\vec{x}),\dots,g^k(\vec{x})\in\Omega$, the barycenter $$\vec{y}:=\frac{1}{k}(g(\vec{x})+g^2(\vec{x})+\cdots+g^k(\vec{x}))$$ is also a point in $\Omega$. But by the very construction we have that $g(\vec{y})=\vec{y}$. Therefore $g$ is the identity element of $G$, as the $G$-action is by assumption free.
\end{proof}

The following results are direct consequences of Lemma~\ref{barycenter}:

\begin{proposition}\label{torsion_free}
The fundamental group of a complete Hessian manifold is torsion-free.
\end{proposition}
\begin{proof}
By Theorem~\ref{Shima_convex_thm}, the fundamental group of a complete Hessian manifold acts freely on a convex domain. The desired result then follows from Lemma~\ref{barycenter}.
\end{proof}

\begin{corollary}\label{trivial_infinite} 
The fundamental group of a complete Hessian manifold is either trivial or infinite.
\end{corollary}
\begin{proof}
This follows from Proposition~\ref{torsion_free}.
\end{proof}

\begin{remark}\label{infty_pi_1_new}
Let $((M, \nabla),h)$ be a compact Hessian manifold, and let $\pi\colon \tilde{M}\rightarrow M$ be the universal covering of $M$. Then $((\tilde{M}, \pi^*\nabla),\pi^*h)$ is a complete Hessian manifold. Provided that the fundamental group of $M$ is finite, then $\tilde{M}$ is compact, which contradicts Theorem~\ref{Shima_convex_thm}. This gives another proof of Corollary~\ref{infty_pi_1}.
\end{remark}

Before we close this section, we note that Proposition~\ref{torsion_free} also straightforwardly implies the following interesting theorem:

\begin{theorem}\label{Torus}
Let $((M, \nabla),h)$ be a compact Hessian manifold of dimension $n$. If the fundamental group of $M$ is Abelian, then $M$ is homeomorphic to the $n$-torus $\mathbb{T}^n$.
\end{theorem}
\begin{proof}
By Proposition~\ref{torsion_free}, the fundamental group $\pi_1(M)$ of $M$ is isomorphic to $\mathbb{Z}^k$ for some $k\in\mathbb{N}$. By Corollary~\ref{aspherical} and the uniqueness of Eilenberg-MacLane space, $M$ is homotopy equivalent to the $k$-torus $\mathbb{T}^k$. Comparing the top homology groups of $M$ and $\mathbb{T}^k$ yields that $k=n$. The desired result then follows from the Borel conjecture, which is proved for homotopy tori. See for example \cite{3} and \cite{5}. 
\end{proof}

\subsection{Flat Line Bundles}

In this section, we study Hessian manifolds using the concept of flat line bundles. We also define and study the locally constant Picard group of a differentiable manifold, and use it to prove that a curved compact orientable Hessian manifold has positive first Betti number.

We shall now recall the notion of a flat $\mathbb{R}$-line bundle:

\begin{definition}
For a $\mathbb{R}$-vector bundle $\mathscr{E}\rightarrow M$ over a differentiable manifold $M$, a pair $(\mathscr{E},D)$ is termed a flat $\mathbb{R}$-vector bundle over $M$ if $D$ is a flat connection on $\mathscr{E}\rightarrow M$. A flat $\mathbb{R}$-vector bundle of rank 1 is then called a flat $\mathbb{R}$-line bundle. In particular, we call $(M\times\mathbb{R},d)$ the trivial flat $\mathbb{R}$-line bundle over $M$, where $d$ here is the standard exterior differential.    
\end{definition}

To fix the terminology, we recall the definition of a local frame field, and that of its gluing 1-cocycles.

Let $\mathscr{L}\rightarrow M$ be a $\mathbb{R}$-line bundle over a differentiable manifold $M$. By the local triviality of $\mathscr{L}\rightarrow M$, there exists an open covering $\{U_\alpha\}_{\alpha\in I}$ of $M$ and for each $\alpha\in I$ a local section $s_\alpha$ of $\mathscr{L}\rightarrow M$ defined on $U_\alpha$, such that $s_\alpha$ is non-vanishing for all $\alpha\in I$. The collection $\{s_\alpha\}_{\alpha\in I}$ is traditionally termed a \emph{local frame field} of $\mathscr{L}\rightarrow M$. For each $\alpha,\beta\in I$, there exists a unique nowhere vanishing smooth function $g_{\alpha\beta}\in C^\infty(U_\alpha\cap U_\beta)$ such that $$s_\beta(p)=s_\alpha(p)\cdot g_{\alpha\beta}(p)$$ for all $p\in U_\alpha\cap U_\beta$, where $U_\alpha$ here is the domain of definition of $s_\alpha$ and $U_\beta$ is the domain of definition of $s_\beta$. The collection $\{g_{\alpha\beta}\}_{\alpha,\beta\in I}$ is termed the gluing 1-cocycle of the local frame field $\{s_\alpha\}_{\alpha\in I}$ of $\mathscr{L}$.

We then investigate the most basic properties of flat $\mathbb{R}$-line bundles.

\begin{lemma}\label{affine_local_frame}
Let $M$ be a differentiable manifold and let $(\mathscr{L},D)$ be a flat $\mathbb{R}$-line bundle over $M$. Then $\mathscr{L}\rightarrow M$ admits a local frame field $\{s_\alpha\}_{\alpha\in I}$ such that $Ds_\alpha=0$ for all $\alpha\in I$, and that the gluing 1-cocycle of the local frame field $\{s_\alpha\}_{\alpha\in I}$ consists of locally constant functions.
\end{lemma}
\begin{proof}
Since the connection $D$ is flat, its connection 1-forms are locally exact. without loss of generality, we may assume that there exists $f\in C^\infty(U_\alpha)$ such that the restriction of $D$ on $U_\alpha$ is $d+df$. 

Now, it suffices to show that, up to a real constant multiple, the first order partial differential equation $ds+sdf=0$ for $s\in C^\infty(U_\alpha)$ has a unique positive solution. But by the method of integration factor, we have $$e^f(ds+sdf)=d(e^fs)$$ and hence the general solution of differential equation $ds+sdf=0$ is precisely $s=Ce^{-f}$, where $C\in\mathbb{R}$ is an integration constant. This proves the desired result.
\end{proof}

Lemma~\ref{affine_local_frame} motivates the following natural definition.

\begin{definition}
Let $(\mathscr{L},D)$ be a flat $\mathbb{R}$-line bundle over a differentiable manifold $M$. A local frame field $\{s_\alpha\}_{\alpha\in I}$ of $\mathscr{L}\rightarrow M$ is termed an affine local frame field of $(\mathscr{L},D)$, if $Ds_\alpha=0$ for all $\alpha\in I$. The gluing 1-cocycle of some affine local frame field of $(\mathscr{L},D)$ is called a 1-cocycle of $(\mathscr{L},D)$.
\end{definition}

Now we introduce the isomorphism classes in the category of flat $\mathbb{R}$-line bundle over a fixed differentiable manifold $M$.

\begin{definition}
Two flat $\mathbb{R}$-line bundles $(\mathscr{L}_1,D_1)$ and $(\mathscr{L}_2,D_2)$ over a differentiable manifold $M$ are said to be gauge equivalent, if there exists a $\mathbb{R}$-vector bundle isomorphism $\phi\colon \mathscr{L}_1\rightarrow \mathscr{L}_2$ such that $D_1-\phi^*D_2\in\Omega^1(M)$ is an exact 1-form on $M$.
\end{definition}

Let $M$ be a differentiable manifold. The collection of gauge equivalence classes of flat $\mathbb{R}$-line bundle over $M$ equipped with the tensor product $\otimes$ as binary operation forms a multiplicative group.   

\begin{definition}
Let $M$ be a differentiable manifold. The group of gauge equivalence classes of flat $\mathbb{R}$-line bundles over $M$ is termed the locally constant Picard group of $M$.
\end{definition}

The following theorem, which relates the locally constant Picard group with cohomologies, is of fundamental importance. 

\begin{lemma}\label{locally_constant_Picard}
Let $M$ be a differentiable manifold. The mapping that sends every flat $\mathbb{R}$-line bundle $(\mathscr{L},D)$ over $M$ to the \v{C}ech cohomology class of a 1-cocycle of $(\mathscr{L},D)$ in $\check{\mathrm{H}}(M;\mathbb{R}^\times)$ is surjective, and two flat $\mathbb{R}$-line bundles over $M$ share the same image under this map if and only if they are gauge equivalent.
\end{lemma}
\begin{proof}
This is a very special case of much more general statements in gauge theory. For a comprehensive proof, see \cite{39}.
\end{proof}

We shall now have a closer look at the structure of the locally constant Picard group.

\begin{proposition}\label{w_1}
Let $M$ be a differentiable manifold. The induced homomorphism $$w_1\colon\mathrm{H}(M;\mathbb{R}^\times)\rightarrow\mathrm{H}(M;\mathbb{Z}/2\mathbb{Z})$$ in the long exact sequence $$\cdots\longrightarrow\mathrm{H}^1(M;\mathbb{R})\longrightarrow\mathrm{H}^1(M;\mathbb{R}^\times)\longrightarrow\mathrm{H}^1(M;\mathbb{Z}/2\mathbb{Z})\longrightarrow\cdots$$ of cohomologies induced from the short exact sequence $$0\longrightarrow\mathbb{R}\overset{\operatorname{exp}}{\longrightarrow}\mathbb{R}^\times\overset{\operatorname{sgn}}{\longrightarrow}\mathbb{Z}/2\mathbb{Z}\longrightarrow0$$ of Abelian groups is precisely the first Stiefel-Whitney class.
\end{proposition}
\begin{proof}
This statement follows from standard algebraic topology.
\end{proof}

A crucial difference between the exponential sequence in complex geometry and our short exact sequence 
$$0\longrightarrow\mathbb{R}\longrightarrow\mathbb{R}^\times\longrightarrow\mathbb{Z}/2\mathbb{Z}\longrightarrow0$$ 
is that the latter one splits. This allow us to define a new invariant for flat $\mathbb{R}$-line bundles.

\begin{definition}\label{sh}
Let $M$ be a differentiable manifold, and let $\varphi\colon\mathbb{R}^\times\rightarrow\mathbb{R}$ be the even extension of the logarithm function, that is, $\varphi(x)=\operatorname{log}|x|$ for all $x\in\mathbb{R}^\times$. The induced homomorphism $\kappa_1:=\varphi_*\colon\mathrm{H}(M;\mathbb{R}^\times)\rightarrow\mathrm{H}(M;\mathbb{R})$ in the long exact sequence $$\cdots\longrightarrow\mathrm{H}^1(M;\mathbb{Z}/2\mathbb{Z})\longrightarrow\mathrm{H}^1(M;\mathbb{R}^\times)\longrightarrow\mathrm{H}^1(M;\mathbb{R})\longrightarrow\cdots$$ of cohomologies induced from the short exact sequence $$0\longrightarrow\mathbb{Z}/2\mathbb{Z}\longrightarrow\mathbb{R}^\times\overset{\operatorname{\varphi}}{\longrightarrow}\mathbb{R}\longrightarrow0$$ of Abelian groups is called the first Koszul class.
\end{definition}

As an immediate consequence of Proposition~\ref{w_1} and Definition~\ref{sh}, the first Koszul class and the first Stiefel-Whitney class together classify the locally constant Picard group.

\begin{corollary}\label{splitting_lemma}
Let $M$ be a differentiable manifold. The mapping $$w_1\oplus\kappa_1\colon\mathrm{H}^1(M;\mathbb{R}^\times)\rightarrow\mathrm{H}^1(M;\mathbb{Z}/2\mathbb{Z})\oplus\mathrm{H}^1(M;\mathbb{R}),\quad (\mathscr{L},D)\mapsto(w_1(\mathscr{L}),\kappa_1(D))$$ is an isomorphism of Abelian groups.
\end{corollary}
\begin{proof}
Apply the splitting lemma and use the exactness of the cohomology functor.
\end{proof}

We finally arrived at the structure theorem of locally constant Picard groups.

\begin{corollary}\label{Pic_structure_thm}
Let $M$ be a differentiable manifold. The locally constant Picard group of $M$ is isomorphic to the additive group $\mathrm{H}^1(M;\mathbb{R})\oplus\mathrm{H}^1(M;\mathbb{Z}/2\mathbb{Z})$.
\end{corollary}
\begin{proof}
The desired result follows from Lemma~\ref{locally_constant_Picard} and Corollary~\ref{splitting_lemma}.
\end{proof}

We shall also be interested in a triviality criterion of elements of the locally constant Picard group. Recall that a line bundle is trivial if and only if it admits a nowhere vanishing global section. The following proposition gives a refinement in the case of flat $\mathbb{R}$-line bundles.

\begin{proposition}\label{trivial_line_bundle}
Let $M$ be an differentiable manifold, and let $(\mathscr{L},D)$ be a flat $\mathbb{R}$-line bundle over $M$. Then $(\mathscr{L},D)$ is gauge equivalent to the trivial flat $\mathbb{R}$-line bundle over $M$ if and only if it admits a nowhere vanishing parallel global section.
\end{proposition}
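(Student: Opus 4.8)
The plan is to prove the two implications separately by explicitly exhibiting, in each direction, either the trivializing gauge isomorphism or the parallel section, exploiting the rigidity of flat line bundles already recorded in Lemma \ref{affine_local_frame}. Throughout I write $\mathbf{1}$ for the constant section $p \mapsto (p,1)$ of the trivial bundle $M\times\mathbb{R}$, and I recall that a connection on a trivial line bundle is of the form $d+\omega$ with $\omega\in\Omega^1(M)$ its connection $1$-form relative to the frame $\mathbf{1}$, so that $(d+\omega)\mathbf{1}=\omega\otimes\mathbf{1}$.

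For the implication ($\Leftarrow$), I would begin with a nowhere vanishing parallel global section $s$ of $(\mathscr{L},D)$, i.e. $Ds=0$ with $s(p)\neq0$ everywhere, and use it to trivialize the bundle via $\phi\colon M\times\mathbb{R}\to\mathscr{L}$, $\phi(p,c):=c\,s(p)$. Since $s$ never vanishes, $\phi$ is an isomorphism of $\mathbb{R}$-vector bundles. The key computation is that of the pullback connection: as $\phi(\mathbf{1})=s$ and $Ds=0$, the section $\mathbf{1}$ is parallel for $\phi^{*}D=\phi^{-1}\circ D\circ\phi$, which forces its connection $1$-form to vanish, whence $\phi^{*}D=d$. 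Consequently $d-\phi^{*}D=0$ is exact and $(\mathscr{L},D)$ is gauge equivalent to $(M\times\mathbb{R},d)$.

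For the implication ($\Rightarrow$), I would use the symmetry of gauge equivalence to write the hypothesis as the existence of a vector bundle isomorphism $\phi\colon M\times\mathbb{R}\to\mathscr{L}$ with $d-\phi^{*}D=df$ for some $f\in C^\infty(M)$. Then $\phi^{*}D=d-df$, so its connection $1$-form relative to $\mathbf{1}$ is $-df$, giving $D(\phi\mathbf{1})=-df\otimes\phi(\mathbf{1})$. The natural candidate for a parallel section is the integrating-factor expression $\sigma:=e^{f}\,\phi(\mathbf{1})$, which is nowhere vanishing since $e^{f}>0$ and $\phi(\mathbf{1})$ never vanishes. Applying the Leibniz rule yields $D\sigma=e^{f}\,df\otimes\phi(\mathbf{1})+e^{f}\,D(\phi\mathbf{1})=0$, so $\sigma$ is the desired nowhere vanishing parallel global section. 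This is precisely the global analogue of the integrating-factor argument used locally in the proof of Lemma \ref{affine_local_frame}.

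The argument is essentially computational and I do not anticipate a serious obstacle; the only point demanding care is fixing the convention for the pullback connection $\phi^{*}D$ and correctly reading off its connection $1$-form with respect to the frame $\phi(\mathbf{1})$, including signs. Once that bookkeeping is in place, the exponential factor $e^{f}$ cancels the connection term automatically and both directions close.
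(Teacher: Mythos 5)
Your proposal is correct and follows essentially the same route as the paper's own proof: one direction trivializes the bundle by the nowhere vanishing parallel section and reads off that the connection form vanishes, and the other applies the integrating-factor trick $e^{\pm f}$ to the frame coming from the gauge isomorphism. The only difference is cosmetic (you keep the isomorphism $\phi$ explicit where the paper simply identifies $\mathscr{L}$ with $M\times\mathbb{R}$, and your sign convention for $f$ is opposite), so no further comment is needed.
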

\begin{proof}
Assume that $(\mathscr{L},D)$ is gauge equivalent to the trivial flat $\mathbb{R}$-line bundle over $M$. Then we identify $\mathscr{L}\equiv M\times\mathbb{R}$, and there exists an exact form $\theta\in \Omega^1(M)$ such that $D=d+\theta$. Take $f\in C^\infty(M)$ such that $\theta=df$. Then the global section $$s=\operatorname{exp}(-f)\in \mathrm{H}^0(M;M\times\mathbb{R})=C^\infty(M)$$ is nowhere vanishing and satisfies that $$Ds=d\operatorname{exp}(-f)+e^{-f}\theta=e^{-f}(\theta-df)=0.$$

To prove the converse statement, we assume that $(\mathscr{L},D)$ admits a nowhere vanishing parallel global section $s$. Then in particular we have $\mathscr{L}\cong M\times\mathbb{R}$. Moreover, since $$D(fs)=df\otimes s+f\cdot Ds=df\otimes s$$ for all $f\in C^\infty(M)$, we conclude that $D=d$.
\end{proof}

To illustrate the power of the structure theorem of locally constant Picard group, we present a proof of the following torsion-free property.

\begin{proposition}\label{Picard_torsion}
Let $M$ be a differentiable manifold, and let $(\mathscr{L},D)$ be a flat $\mathbb{R}$-line bundle over $M$. Suppose that $\mathscr{L}$ is a trivial $\mathbb{R}$-line bundle, and there exists a positive integer $k$ such that $(\mathscr{L}^{\otimes k},D)$ is a trivial flat $\mathbb{R}$-line bundle. Then $(\mathscr{L},D)$ itself is a trivial flat $\mathbb{R}$-line bundle.
\end{proposition}
\begin{proof}
Since $\mathscr{L}$ is a trivial $\mathbb{R}$-line bundle, its Stiefel-Whitney class $w_1(\mathscr{L})=0$. Therefore the gauge equivalent class of $(\mathscr{L},D)$ is contained in the subgroup $\mathrm{H}^1(M;\mathbb{R})$ of the locally constant Picard group of $M$. Since $(\mathscr{L}^{\otimes k},D)$ is trivial and $\mathrm{H}^1(M;\mathbb{R})$ is torsion-free, we conclude that $(\mathscr{L},D)$ is also trivial.
\end{proof}

\subsection{Applications of Koszul Forms}

Amongst all flat $\mathbb{R}$-line bundle over an affine manifold, the following one, which will be called the canonical line bundle, is of great importance.

\begin{definition}
Let $(M, \nabla),$ be an affine manifold, and denote also by $\nabla$ the connection on tensor bundles of $M$ induced by $\nabla$. The flat $\mathbb{R}$-line bundle $$K_\nabla=(\bigwedge^n(T^*M),\nabla)$$ over $M$ is termed the canonical line bundle of $(M, \nabla)$.
\end{definition}

Note that, although the top exterior power of the cotangent bundle of an orientable manifold is always trivial, the canonical line bundle of an orientable affine manifold in general is non-trivial as a flat $\mathbb{R}$-line bundle.

Recall that a Calabi-Yau manifold is a K\"ahler manifold with trivial canonical line bundle, or equivalently, admitting a holomorphic volume form. The following corollary exhibits a close analogy.

\begin{corollary}\label{parallel_volume}
Let $(M, \nabla)$ be an affine manifold of dimension $n$. Then the canonical line bundle $K_\nabla$ of $(M, \nabla)$ is trivial if and only if $M$ admits a nowhere vanishing $n$-form $\Omega$ such that $\nabla\Omega=0$.
\end{corollary}
\begin{proof}
This directly follows from Proposition~\ref{trivial_line_bundle}.
\end{proof}

In this section, we shall explore various applications of flat $\mathbb{R}$-line bundles and their first Koszul forms, including a powerful structure theorem of compact Hessian manifolds.

We now introduce the notion of the first Koszul form of a Hessian line bundle.

\begin{definition}
Let $M$ be a differentiable manifold. A Hessian line bundle over $M$ is a pair $((\mathscr{L},D),\langle\cdot,\cdot\rangle)$, where $(\mathscr{L},D)$ be a flat $\mathbb{R}$-line bundle over $M$ and $\langle\cdot,\cdot\rangle$ is a bundle metric of $\mathscr{L}\rightarrow M$.
\end{definition}

\begin{definition}\label{1st_Koszul_form}
Let $M$ be a differentiable manifold, and let $((\mathscr{L},D),\langle\cdot,\cdot\rangle)$ be a Hessian line bundle over $M$. The first Koszul form $\varkappa\in\Omega^1(M)$ of $((\mathscr{L},D),\langle\cdot,\cdot\rangle)$ is defined by $$\varkappa|_{U_\alpha}:=\frac{1}{2}d\operatorname{log}(\langle s_\alpha,s_\alpha\rangle)$$ where $\{s_\alpha\}_{\alpha\in I}$ is an affine local frame of $(\mathscr{L},D)$ and $U_\alpha$ is the domain of definition of $s_\alpha$ for each $\alpha\in I$.
\end{definition}

By the properties of affine local frame, the first Koszul form of a Hessian line bundle is well-defined, and is closed as it is by definition locally exact.

A natural question is to compare the first Koszul class of a flat $\mathbb{R}$-line bundle $(\mathscr{L},D)$ and the cohomology class of the first Koszul form of Hessian line bundle $((\mathscr{L},D),\langle\cdot,\cdot\rangle)$, where $\langle\cdot,\cdot\rangle$ is some bundle metric on $\mathscr{L}$. Surprisingly, the cohomology class of the first Koszul form is independent of the auxiliary bundle metric and always coincides with the first Koszul class.

\begin{proposition}\label{form_to_class}
Let $M$ be a differentiable manifold, and let $((\mathscr{L},D),\langle\cdot,\cdot\rangle)$ be a Hessian line bundle over $M$. Let $\varkappa\in\Omega^1(M)$ be the first Koszul form of $((\mathscr{L},D),\langle\cdot,\cdot\rangle)$, then we have $\kappa_1(D)=[\varkappa]\in\mathrm{H}^1(M;\mathbb{R})$ is the first Koszul class of $(\mathscr{L},D)$.
\end{proposition}
\begin{proof}
Let $\{s_\alpha\}_{\alpha\in I}$ be an affine local frame field for $(\mathscr{L},D)$ and let $\{g_{\alpha\beta}\}_{\alpha,\beta\in I}$ be its gluing 1-cocycle.

By the very definition, the first Koszul class $\kappa_1(D)$ sends the class of the cocycle $\{g_{\alpha\beta}\}_{\alpha,\beta\in I}$ in $\mathrm{H}^1(M;\mathbb{R}^\times)$ to the class of the cocycle $\{\operatorname{log}|g_{\alpha\beta}|\}_{\alpha,\beta\in I}$ in $\mathrm{H}^1(M; \mathbb{R})$.

By Definition~\ref{1st_Koszul_form}, the first Koszul form $\varkappa$ is a global 1-form defined locally by $$\varkappa_\alpha:=\frac{1}{2}d\operatorname{log}(\langle s_\alpha, s_\alpha \rangle)$$ for $\alpha\in I$. The de Rham cohomology class $[\varkappa]$ corresponds to a \v{C}ech cohomology class in $\mathrm{H}^1(M; \underline{\mathbb{R}})$ via the de Rham isomorphism. For any $\alpha\in I$, define $$f_\alpha := \frac{1}{2}\operatorname{log}(\langle s_\alpha, s_\alpha \rangle).$$ Then for any $\alpha\in I$, we have $$d(f_\alpha)=\frac{1}{2}d\operatorname{log}(\langle s_\alpha, s_\alpha \rangle).$$

Straightforward computation yields that
\begin{align*}
    f_{\beta}-f_{\alpha} &= \frac{1}{2}\operatorname{log}(\langle s_\beta, s_\beta \rangle) - \frac{1}{2}\operatorname{log}(\langle s_\alpha, s_\alpha \rangle) \\
    &= \frac{1}{2}\operatorname{log}(g_{\alpha\beta}^2 \langle s_\alpha, s_\alpha \rangle) - \frac{1}{2}\operatorname{log}(\langle s_\alpha, s_\alpha \rangle) \\&= \operatorname{log}|g_{\alpha\beta}|.
\end{align*}
Therefore, a \v{C}ech cocycle representing $[\varkappa]$ is $\{\operatorname{log}|g_{\alpha\beta}|\}_{\alpha,\beta\in I}$, and hence $\kappa_1(D)=[\varkappa]$.
\end{proof}

On a Hessian manifold, we have a canonical Koszul form, which we will now define.

\begin{definition}
Let $((M, \nabla),h)$ be a Hessian manifold and denote by $\langle\cdot,\cdot\rangle_h$ the bundle metric on tensor bundles of $M$ induced by $h$. The first Koszul form of the Hessian line bundle $(K_\nabla,\langle\cdot,\cdot\rangle_h)$ is also said to be the first Koszul form of $((M, \nabla),h)$.
\end{definition}

For the convenience of the reader, we review the standard notion of first Koszul forms of a Hessian manifold.

For an orientable Riemannian manifold $(M,h)$, we denote by $\operatorname{vol}_h$ its volume form.

\begin{proposition}
Let $((M,\nabla),h)$ be an oriented Hessian manifold, and let $\varkappa$ be its first Koszul form. Then it holds that $2\varkappa=d\operatorname{log}(|h|)$, and $$\nabla_X \operatorname{vol}_h=\varkappa(X)\operatorname{vol}_h$$ for every vector field $X$ on $M$, where $|h|$ is the determinant of the matrix of which $(i,j)$-entry is $h(\partial/\partial x^i,\partial/\partial x^j)$ for some affine coordinates $x^1,\dots,x^n$ of the oriented affine manifold $(M,\nabla)$.
\end{proposition}
\begin{proof}
The proof is straightforward and hence omitted.   
\end{proof}

In \cite{1}, H.Shima and K.Yagi established the following  important property of the first Koszul form of a compact Hessian manifold.

\begin{proposition}[Shima-Yagi, 1997\cite{1}]\label{Shima_parallel_thm}
Let $((M, \nabla),h)$ be an orientable compact Hessian manifold. Let $\bar{\nabla}$ be the Levi-Civita connection of $(M,h)$, and let $\varkappa\in\Omega^1(M)$ be the first Koszul form of $((M, \nabla),h)$. Then it holds that $\bar{\nabla}\varkappa=0$.
\end{proposition}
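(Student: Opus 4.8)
The plan is to reduce the statement to a single pointwise tensor identity and then to establish that identity by an integral (Bochner-type) argument on the compact manifold, using the two facts already recorded about the first Koszul form $\varkappa$: that it is closed, and that $\nabla_X\operatorname{vol}_h=\varkappa(X)\operatorname{vol}_h$, equivalently $2\varkappa=d\operatorname{log}(|h|)$.

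First I would fix oriented affine coordinates $x^1,\dots,x^n$, in which the Christoffel symbols of $\nabla$ vanish. Writing $C:=\nabla h$ for the (totally symmetric Amari--Chentsov) cubic form with components $C_{ijk}=\partial_i h_{jk}$, one has $\varkappa_j=\tfrac12 h^{ik}C_{jik}$, so that $\varkappa$ is the $h$-trace of $C$, while the difference tensor $\gamma:=\bar{\nabla}-\nabla$ is given by $\gamma^m_{kj}=\tfrac12 h^{mi}C_{kji}$. A direct substitution, using that in affine coordinates $\partial_k\varkappa_j=\nabla_k\varkappa_j=:\beta_{kj}$ is the (symmetric, since $\varkappa$ is closed) second Koszul form, then gives
\[
\bar{\nabla}_k\varkappa_j=\partial_k\varkappa_j-\gamma^m_{kj}\varkappa_m=\beta_{kj}-\tfrac12 C_{kji}\varkappa^i .
\]
Thus the assertion $\bar{\nabla}\varkappa=0$ is equivalent to the pointwise identity $\beta_{kj}=\tfrac12 C_{kji}\varkappa^i$.

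Next I would extract the integrated trace of this identity essentially for free. The volume relation yields $\operatorname{div}_h X=\operatorname{div}_\nabla X+\varkappa(X)$ for every vector field $X$; applying this to $X=\varkappa^\sharp$ and using $\partial_i h^{ij}=-2\varkappa^j$ gives $\operatorname{div}_h\varkappa^\sharp=\operatorname{tr}_h\beta-|\varkappa|^2$. Since $M$ is compact and orientable, integrating this total divergence against $\operatorname{vol}_h$ produces
\[
\int_M\operatorname{tr}_h\beta\,\operatorname{vol}_h=\int_M|\varkappa|^2\,\operatorname{vol}_h ,
\]
which is exactly the $h$-trace of the desired identity integrated over $M$, since $\tfrac12 h^{kj}C_{kji}\varkappa^i=|\varkappa|^2$.

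The remaining and principal task is to upgrade this integrated scalar identity to the full pointwise tensor identity, i.e.\ to show that $\bar{\nabla}\varkappa$ vanishes rather than merely integrates to zero in trace. For this I would invoke the Weitzenb\"ock formula for the closed $1$-form $\varkappa$, namely $\Delta_{\mathrm{Hodge}}\varkappa=\bar{\nabla}^*\bar{\nabla}\varkappa+\operatorname{Ric}_h(\varkappa)$; pairing with $\varkappa$ and integrating gives $\int_M|\delta_h\varkappa|^2\,\operatorname{vol}_h=\int_M|\bar{\nabla}\varkappa|^2\,\operatorname{vol}_h+\int_M\operatorname{Ric}_h(\varkappa,\varkappa)\,\operatorname{vol}_h$. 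The hard part is that the Ricci term carries no sign a priori; the resolution is to substitute the Hessian-specific expression for $\operatorname{Ric}_h$ in terms of $C$, $\nabla C$ (which is again totally symmetric) and $\beta$, obtained by computing the curvature of $\bar{\nabla}=\nabla+\gamma$ out of the flat connection $\nabla$. The expectation is that, after combining with the divergence identity above, the resulting integrand reorganizes into a manifestly nonnegative quadratic in the components of $\bar{\nabla}\varkappa$, whence compactness forces $\bar{\nabla}\varkappa\equiv0$. Verifying that this quadratic is genuinely a sum of squares, so that the indefinite Ricci contribution is exactly absorbed, is where the total symmetry of $C$ and of $\nabla C$, together with compactness and orientability (needed for Stokes), all enter, and is the main obstacle of the argument.
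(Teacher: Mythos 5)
The paper gives no argument for this proposition at all: it is quoted verbatim from Shima--Yagi \cite{1}, so there is no internal proof to measure your attempt against. Your preliminary reductions are correct and worth recording: in affine coordinates one does have $\bar{\nabla}_k\varkappa_j=\beta_{kj}-\tfrac12 C_{kji}\varkappa^i$ with $C=\nabla h$ totally symmetric, the divergence identity $\operatorname{div}_h\varkappa^\sharp=\operatorname{tr}_h\beta-|\varkappa|^2$ is right, and integrating it does give exactly the $h$-trace of the claimed identity. But that trace identity is strictly weaker than the assertion $\bar{\nabla}\varkappa=0$, and everything beyond it in your write-up is conditional.

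The genuine gap is the final step, which you yourself flag as an ``expectation'': that after substituting the Hessian curvature formula into the Bochner identity $\int_M|\delta_h\varkappa|^2=\int_M|\bar{\nabla}\varkappa|^2+\int_M\operatorname{Ric}_h(\varkappa,\varkappa)$ the integrand becomes a manifestly nonnegative quadratic in $\bar{\nabla}\varkappa$. Nothing you have written supports this. Using Lemma \ref{Riemann_curvature1} one finds $\operatorname{Ric}_{jk}=\beta_{jk}-\bar{\nabla}_j\varkappa_k-\tfrac14 C_{jlm}C_k{}^{lm}$, so that $\operatorname{Ric}_h(\varkappa,\varkappa)=\beta(\varkappa^\sharp,\varkappa^\sharp)-(\bar{\nabla}\varkappa)(\varkappa^\sharp,\varkappa^\sharp)-\tfrac14\|\iota_{\varkappa^\sharp}C\|^2$; the last term is nonpositive and the first two are sign-indefinite, so the single Bochner identity plus the single divergence identity you computed do not reorganize into a sum of squares --- one needs further integral identities involving $\nabla C$ and $\nabla\beta$ (this is precisely where Shima and Yagi do the real work, via several carefully chosen divergences on the compact manifold). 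As it stands the proposal is a plausible plan with its central lemma unproved, not a proof; either carry out the missing algebraic identity explicitly or, as the paper does, cite \cite{1}.
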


By Proposition~\ref{Shima_parallel_thm}, we have $d|\varkappa|^2\equiv0$ via parallel transport, validating the following definition of a novel numerical invariant of compact Hessian manifolds.

\begin{definition}
For a compact Hessian manifold $((M,\nabla),h)$ with first Koszul form $\varkappa$, the real number $\Lambda:=|\varkappa|^2/\operatorname{dim}(M)$ is termed the cosmological constant of $((M,\nabla),h)$.
\end{definition}

A criterion for flatness of Hessian manifolds in terms of the first Koszul form is also discovered by H.Shima and K.Yagi.

\begin{theorem}[Shima-Yagi, 1997\cite{1}]\label{Shima_flat_thm}
Let $((M, \nabla),h)$ be an orientable compact Hessian manifold of dimension $n$. Then the following statements are equivalent:
\begin{enumerate}
    \item [1.] The first Koszul form of $((M, \nabla),h)$ vanishes identically;
    \item [2.] $\nabla$ is the Levi-Civita connection of $(M,h)$;
    \item [3.] $M$ admits a nowhere vanishing $n$-form $\Omega$ such that $\nabla\Omega=0$.
\end{enumerate}
\end{theorem}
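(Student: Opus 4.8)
The plan is to prove the three statements equivalent by establishing $(2)\Rightarrow(3)\Rightarrow(1)$, the easy converse $(1)\Rightarrow(3)$, and finally the substantial implication $(1)\Rightarrow(2)$ that closes the loop. Throughout I would work with the totally symmetric cubic form (Amari--Chentsov tensor) $C:=\nabla h$ and the difference tensor $\gamma:=\bar\nabla-\nabla$, which is $h$-symmetric in all three indices with $\gamma=\tfrac12 C^{\sharp}$; its $h$-trace is exactly the first Koszul form, $\operatorname{tr}_h\gamma=\varkappa$, consistently with $2\varkappa=d\log|h|$. The two structural inputs I would lean on are the identity $\nabla_X\operatorname{vol}_h=\varkappa(X)\operatorname{vol}_h$, tying the Koszul form to parallelism of the metric volume form, together with Corollary \ref{parallel_volume} and Proposition \ref{trivial_line_bundle}, which identify $\nabla$-parallel nowhere-vanishing $n$-forms with trivializations of $K_\nabla$.

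The easy implications go as follows. For $(2)\Rightarrow(3)$ and $(1)\Rightarrow(3)$: if $\nabla=\bar\nabla$ then $\nabla h=0$, hence $\nabla\operatorname{vol}_h=0$ and $\Omega:=\operatorname{vol}_h$ works; the same conclusion holds whenever $\varkappa=0$, since then $\nabla_X\operatorname{vol}_h=\varkappa(X)\operatorname{vol}_h=0$. This also yields the trivial $(2)\Rightarrow(1)$. For $(3)\Rightarrow(1)$, suppose $\Omega$ is a $\nabla$-parallel nowhere-vanishing $n$-form. As $M$ is orientable the bundle $\bigwedge^nT^*M$ is trivial, so $\operatorname{vol}_h=\lambda\,\Omega$ for a smooth nowhere-vanishing $\lambda$; replacing $\Omega$ by $-\Omega$ if needed I may take $\lambda>0$. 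Comparing $\nabla\operatorname{vol}_h=d\lambda\otimes\Omega$ with $\nabla\operatorname{vol}_h=\lambda\,\varkappa\otimes\Omega$ gives $\varkappa=d\log\lambda$, so $\varkappa$ is exact. But by Proposition \ref{Shima_parallel_thm} the form $\varkappa$ is $\bar\nabla$-parallel, hence closed and coclosed, i.e. harmonic; on a compact orientable manifold an exact harmonic form vanishes (equivalently $\|\varkappa\|_{L^2}^2=\langle d\log\lambda,\varkappa\rangle=\langle\log\lambda,\delta\varkappa\rangle=0$). Thus $\varkappa=0$, establishing $(1)$, and together with $(1)\Rightarrow(3)$ the equivalence $(1)\Leftrightarrow(3)$.

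The crux, and the step I expect to be the main obstacle, is $(1)\Rightarrow(2)$: upgrading the vanishing of the trace $\operatorname{tr}_h\gamma=\varkappa$ to the vanishing of the whole cubic form $C$ (equivalently $\gamma=0$, equivalently $\nabla=\bar\nabla$). Pointwise, trace-free does not imply zero, so this is genuinely global and compactness must enter. My plan is a Bochner / integration-by-parts argument. First, flatness and torsion-freeness of $\nabla$ give the Codazzi identity that $\nabla C$ is totally symmetric, and expressing the Levi-Civita curvature of $h$ through $\nabla=\bar\nabla-\gamma$ produces a curvature identity of the schematic form $\overline{\operatorname{Ric}}=(\text{first-order terms in }\varkappa)-(\text{quadratic form in }\gamma)$. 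Since $\varkappa=0$, the metric volume form $\operatorname{vol}_h$ is now $\nabla$-parallel, so $\int_M\nabla_iV^i\,\operatorname{vol}_h=0$ for every vector field $V$; integrating the contracted curvature identity against $\operatorname{vol}_h$ and invoking Proposition \ref{Shima_parallel_thm} to kill the divergence contributions is designed to collapse everything to $\int_M\|\gamma\|^2\,\operatorname{vol}_h=0$, forcing $\gamma=0$.

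The delicate points I anticipate are getting the curvature identity and all the integration-by-parts signs exactly right (the affine divergence differs from the Riemannian one precisely by a $\varkappa$-term, so the bookkeeping is where errors would creep in), and confirming that the surviving quadratic expression in $\gamma$ is genuinely sign-definite rather than reducing to a tautology under the trace-free constraint. As a cross-check and alternative route I would keep in reserve the Monge--Amp\`ere picture: passing to the universal cover, Theorem \ref{Shima_convex_thm} realizes it as a convex domain on which $h=\nabla(df)$ with $\det\operatorname{Hess}f$ constant (since $\varkappa=0$), and a Jörgens--Calabi--Pogorelov-type rigidity theorem would force $f$ to be a quadratic polynomial, whence $h$ is flat and $\nabla=\bar\nabla$.
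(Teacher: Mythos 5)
First, a point of comparison: the paper does not prove this theorem at all --- it is quoted from Shima--Yagi \cite{1} --- so your attempt must stand on its own. Your easy implications do: $(2)\Rightarrow(1),(3)$ and $(1)\Rightarrow(3)$ follow from $\nabla_X\operatorname{vol}_h=\varkappa(X)\operatorname{vol}_h$, and your $(3)\Rightarrow(1)$ argument (write $\operatorname{vol}_h=\lambda\Omega$, deduce $\varkappa=d\log\lambda$ is exact, then kill it because Proposition~\ref{Shima_parallel_thm} makes $\varkappa$ harmonic and exact harmonic forms vanish on a compact oriented manifold) is correct and uses exactly the ingredients available here.

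The gap is in your primary route for $(1)\Rightarrow(2)$, and it is the one you yourself flagged: the quadratic form in $\gamma$ comes out with the wrong sign, and there are no divergence terms to exploit. The curvature of a Hessian metric contains no derivatives of the difference tensor: by Corollary~\ref{Riemann_curvature2}, in an orthonormal frame $4\operatorname{Ric}_{ij}=\operatorname{tr}(A_iA_j)-\sum_k\operatorname{tr}(A_k)A_{ijk}$, and the only trace of the totally symmetric tensor $A$ is $\operatorname{tr}(A_k)=2\varkappa_k$. So once $\varkappa=0$ the identity reads $4\operatorname{Ric}_{ij}=\operatorname{tr}(A_iA_j)\ge 0$ and $4S=\|A\|^2\ge 0$; integrating against the (now $\nabla$-parallel) volume form yields the tautology $4\int_M S=\int_M\|A\|^2$ rather than $\int_M\|A\|^2=0$. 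No integration-by-parts bookkeeping can fix this, because every candidate vector field built from traces of $A$ already vanishes. Genuinely global input is needed, and your ``reserve'' route is in fact the standard Shima--Yagi argument and should be promoted to the main one: $\varkappa=0$ makes $\det\operatorname{Hess}f$ locally constant, Theorem~\ref{Shima_convex_thm} realizes the universal cover as a convex domain carrying a global potential whose Hessian metric is complete (since $M$ is compact --- this completeness must be stated, as it is the hypothesis that makes the rigidity theorem bite), and the Calabi--Pogorelov theorem forces the potential to be a quadratic polynomial on all of $\mathbb{R}^n$, whence $A=0$ and $\nabla=\bar\nabla$. Alternatively, the sign you did obtain can be turned to advantage: $\operatorname{Ric}\ge 0$ together with Cheeger--Gromoll splitting and the contractibility and noncompactness of the universal cover forces $(M,h)$ to be flat, and then $\operatorname{tr}(A_i^2)=0$ gives $A_i=0$ for each $i$.
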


Recall that, in Remark~\ref{HW_mfd}, we found an example of an orientable compact Hessian manifold $((M, \nabla),h)$ with vanishing first Betti number, but in that case $(M,h)$ is in fact a flat Riemannian manifold. This is not just a coincidence.

\begin{corollary}\label{b1_flat}
Let $((M, \nabla),h)$ be an orientable compact Hessian manifold. If the first Betti number of $M$ is zero, then $(M,h)$ is a flat Riemannian manifold and $\nabla$ is its Levi-Civita connection.
\end{corollary}
\begin{proof}
Since $\mathrm{H}^1(M;\mathbb{R})=0$, by Proposition~\ref{Pic_structure_thm} we have that the Picard group of $M$ is isomorphic to $\mathrm{H}^1(M;\mathbb{Z}/2\mathbb{Z})$, and hence is 2-torsion. In particular, the canonical line bundle $K$ of $(M,\nabla)$ satisfies $K^{\otimes2}=0$. Since $M$ is orientable, the $\mathbb{R}$-line bundle $\bigwedge^n(T^*M)$ is trivial. Therefore, by virtue of Proposition~\ref{Picard_torsion}, the canonical line bundle of $M$ is trivial as a flat $\mathbb{R}$-line bundle. Corollary~\ref{parallel_volume} then implies that $M$ admits a volume form $\Omega$ such that $\nabla\Omega=0$. Now, by Theorem~\ref{Shima_flat_thm}, the Levi-Civita connection of $(M,h)$ is identical to $\nabla$, of which curvature tensor vanishes identically. Therefore $(M,h)$ is a flat Riemannian manifold.
\end{proof}

\section{Essential Tools from Analysis}

\subsection{Radiant Hessian manifolds}

In this section, we introduce radiant manifolds and then study Hessian manifold in terms of the generalized Legendre duality mentioned above.

\begin{definition}
Let $(M,\nabla)$ be an affine manifold. A vector field $H$ on $M$ is termed an Euler vector field of $(M,\nabla)$, if $\nabla_XH=X$ for every vector field $X$ on $M$. An affine manifold is said to be radiant, if it admits an Euler vector field. 
\end{definition}

\begin{definition}
Let $M$ be a differentiable manifold. An atlas $$\mathscr{A} = \left\{\left(U_\alpha, \varphi_\alpha\right) \mid \alpha \in I\right\}$$ of $M$ is said to be a radiant affine atlas, if for any $\alpha, \beta \in I$ there exist $A \in \operatorname{GL}_n(\mathbb{R})$ such that 
$$\varphi_\beta\left(\varphi_\alpha^{-1}(\bf{x})\right)=A {\bf{x}}$$
for all ${\bf{x}} \in \varphi_\alpha\left(U_\alpha \cap U_\beta\right)$. An element of a radiant affine atlas of $M$ is called a radiant affine chart of $M$, and the local coordinates associated to a radiant affine chart of $M$ are termed radiant affine coordinates.
\end{definition}

The following theorem shows that the above two definitions of radiant affine manifolds are equivalent to each other.

\begin{theorem}
An affine manifold $(M,\nabla)$ is radiant if and only if the adapted affine atlas of $\nabla$ is a radiant affine atlas of $M$.
\end{theorem}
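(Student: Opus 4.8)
The plan is to work entirely in the parallel coordinate frames furnished by the adapted atlas, where $\nabla dx^i=0$ forces $\nabla\partial_i=0$ for $\partial_i=\partial/\partial x^i$, so that covariant differentiation of a vector field reduces to ordinary partial differentiation of its components. The key computation I would establish first is a local normal form for Euler vector fields: in any affine chart $(U,\varphi)$ of $\nabla$ with parallel coordinates $x^1,\dots,x^n$, writing $H=\sum_i H^i\partial_i$ gives $\nabla_{\partial_j}H=\sum_i(\partial_j H^i)\partial_i$, so the Euler condition $\nabla_X H=X$ for all $X$ is equivalent to $\partial_j H^i=\delta^i_j$, i.e. $H^i=x^i+c^i$ for constants $c^i$. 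Thus, after the affine shift $\tilde x^i:=x^i+c^i$, which is again an affine chart of $\nabla$, the field becomes radial, $H=\sum_i\tilde x^i\,\partial/\partial\tilde x^i$. This normal form is the engine driving both implications.

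For the direction asserting that a radiant affine atlas produces radiance, I would assume the affine charts can be taken with linear transitions $\varphi_\beta\circ\varphi_\alpha^{-1}(\mathbf x)=A\mathbf x$, and in each such chart define the radial field $H_\alpha:=\sum_i x_\alpha^i\,\partial/\partial x_\alpha^i$, which the normal form identifies as a local Euler field. I then verify gluing: on an overlap with $x_\beta=Ax_\alpha$ the chain rule gives $\partial/\partial x_\alpha^i=\sum_j A^j_i\,\partial/\partial x_\beta^j$, whence $H_\alpha=\sum_j\bigl(\sum_i A^j_i x_\alpha^i\bigr)\,\partial/\partial x_\beta^j=\sum_j x_\beta^j\,\partial/\partial x_\beta^j=H_\beta$. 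The vanishing of the translation part is exactly what forces agreement, so the $H_\alpha$ patch into a global Euler field and $(M,\nabla)$ is radiant.

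For the converse, I would start from a given Euler field $H$, apply the normal form in every affine chart to pass to the shifted charts $\tilde x^i=x^i+c^i$ in which $H$ is radial, and note these remain affine charts of $\nabla$. Writing the a priori affine transition as $\tilde x_\beta=A\tilde x_\alpha+\mathbf b$ and expressing the single field $H$ radially in both frames, the same chain-rule computation yields $\sum_i A^j_i\tilde x_\alpha^i=\sum_i A^j_i\tilde x_\alpha^i+b^j$, forcing $\mathbf b=0$. Hence the family of all $H$-radial charts has purely linear transitions and forms a radiant affine atlas drawn from the adapted atlas of $\nabla$.

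The genuine obstacle here is interpretational rather than computational. The adapted affine atlas is maximal, so alongside any chart it contains every translate of that chart, and therefore the maximal atlas literally never has only linear transitions; the statement must be read as asserting that the affine charts of $\nabla$ admit a selection forming a radiant affine atlas, which is precisely what the two constructions above produce. I would make this selection explicit and emphasize that the radiant sub-atlas is the canonical witness of radiance. The only point needing a line of care is that no cocycle obstruction arises when assembling the radiant charts in the converse: since the shift constants $c^i$ in each chart are read off from the single globally defined field $H$, the shifted charts are canonically determined, and pairwise linearity of their transitions is all that is required.
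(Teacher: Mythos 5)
Your proof is correct and essentially complete. The paper itself does not argue this theorem at all --- it simply defers to the references of Shima and of Fried--Goldman--Hirsch --- so your write-up supplies a self-contained argument where the paper has none. The normal form $H^i=x^i+c^i$ in parallel coordinates, obtained from $\nabla_{\partial_j}H=\sum_i(\partial_jH^i)\partial_i$ and the Euler condition $\partial_jH^i=\delta^i_j$, is exactly the classical mechanism underlying the cited results, and both of your chain-rule computations (gluing the local radial fields when transitions are linear, and forcing the translation part $\mathbf b=0$ when a single global $H$ is radial in two shifted charts) are sound. Your interpretational remark is also well taken and worth keeping: since the adapted atlas is maximal it contains all translates of any chart, so the statement can only mean that the affine structure admits a radiant sub-atlas, which is precisely what your two constructions exhibit. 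The only points deserving an explicit sentence are that the constants $c^i$ are determined chartwise only on connected chart domains (so one should take charts connected, or refine), and that in the converse direction the shifted charts still cover $M$ because every point lies in some affine chart of the adapted atlas; neither affects the substance of the argument.
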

\begin{proof}
See for example \cite{7} and \cite{8}.
\end{proof}

The following propositions characterize the universal coverings of compact radiant affine manifolds as open cones in the Euclidean spaces.

\begin{proposition}\label{radiant_cone_1}
Let $((M,\nabla),h)$ be a complete Hessian manifold, and let $\pi\colon\tilde{M}\rightarrow M$ be the universal covering of $M$. Let $\Phi\colon\tilde{M}\rightarrow\mathbb{R}^n$ be the affine development of $(\tilde{M},\pi^*\nabla)$. Suppose that $\Phi(\tilde{M})$ is an open cone in $\mathbb{R}^n$. Then $((M,\nabla),h)$ is radiant.
\end{proposition}
\begin{proof}
By Theorem~\ref{Shima_convex_thm}, we may identify $\Omega:=\Phi(\tilde{M})=\tilde{M}$.

Since, up to a translation, $\Omega$ is a cone with apex at the origin, all affine automorphisms of $(\Omega,\pi^*\nabla)$ are in fact linear. Now, it suffices to descend the standard Euler vector field $$x^1\frac{\partial}{\partial x^1}+\cdots+x^n\frac{\partial}{\partial x^n}$$ defined on $(\Omega,\pi^*\nabla)$ to an Euler vector field of $(M,\nabla)$.
\end{proof}

The converse statement also holds if we assume compactness.

\begin{proposition}\label{radiant_cone_2}
Let $((M,\nabla),h)$ be a compact Hessian manifold and $\pi\colon\tilde{M}\rightarrow M$ the universal covering of $M$. Let $\Phi\colon\tilde{M}\rightarrow\mathbb{R}^n$ be the affine development of $(\tilde{M},\pi^*\nabla)$. Suppose that $(M,\nabla)$ is radiant. Then $\Phi(\tilde{M})$ is a convex cone in $\mathbb{R}^n$.
\end{proposition}
\begin{proof}
By Theorem~\ref{Shima_convex_thm}, there exists a convex domain $\Omega$ in $\mathbb{R}^n$ such that  $\Omega:=\Phi(\tilde{M})=\tilde{M}$. An Euler vector field $H$ of $(M,\nabla)$ lifts to a standard Euler vector field $$\pi^*H=\sum_{i=1}^n(x^i-a^i)\frac{\partial}{\partial x^i}$$ on $(\Omega,\pi^*\nabla)$, where $a^1,\dots,a^n\in\mathbb{R}$. Moreover, by Poincar\'{e}-Hopf index theorem, we have $(a^1,\dots,a^n)\notin\Omega$. Up to an affine change of coordinates, we may assume $a^1,\dots,a^n=0$. Since $M$ is compact, it follows that $H$ is a complete vector field. Since $\pi\colon\Omega\rightarrow M$ is a covering space, we obtain that $\pi^*H$ is also a complete vector field as $H$ is. Now it remains to recall that the integral curves of the standard Euler vector field $$x^1\frac{\partial}{\partial x^1}+\cdots+x^n\frac{\partial}{\partial x^n}$$ are rays emitting from the origin of $\mathbb{R}^n$.
\end{proof}

Finally, we also recall that, in \cite{8}, D.Fried et al. studied the existence of parallel tensors on compact radiant affine manifolds.

\begin{theorem}[Fried-Goldman-Hirsch,1981\cite{8}]\label{radiant_parallel1}
A compact radiant affine manifold $(M,\nabla)$ does not admit a nonzero parallel $1$-form.
\end{theorem}

\begin{theorem}[Fried-Goldman-Hirsch,1981\cite{8}]\label{radiant_parallel2}
A compact radiant affine manifold $(M,\nabla)$ of dimension $n$ does not admit a nonzero parallel $n$-form.
\end{theorem}

\subsection{Legendre Transformation}

It turns out that there exists a natural duality between radiant Hessian manifolds and Hessian manifolds of Koszul type. Also, for a radiant Hessian manifold $((M, \nabla),h)$ of Koszul type, we prove that its dual $((M, \nabla^*),h)$ is also a radiant Hessian manifold of Koszul type, and provide an explicit expression of the $1$-form potential of $((M, \nabla^*),h)$ in terms of that of $((M, \nabla),h)$, resembling the classical Legendre transformation. 

The following two fundamental theorems illustrate the duality between radiant Hessian manifolds and Hessian manifolds of Koszul type.

\begin{proposition}\label{Legendre_1}
Let $((M, \nabla),h)$ be a Hessian manifold of Koszul type. Then $((M, \nabla^*),h)$ is a radiant Hessian manifold. In fact, if $\eta$ is a $1$-form potential of $((M, \nabla),h)$, then $\eta^\sharp$ is an Euler vector field of $(M, \nabla^*)$, where $\sharp$ is the musical isomorphism.
\end{proposition}
\begin{proof}
Consider affine coordinates $x^1,\dots,x^n$ of $(M,\nabla)$. For each $i=1,\dots,n$, define $p_i:=\eta(\partial/\partial x^i)$, so that $$\eta=\sum_{i=1}^np_idx^i.$$ Since $\partial/\partial x^i$ is parallel, we obtain that$$h=\nabla\eta=\sum_{i=1}^ndp_i\otimes dx^i.$$ Then for every $i,j\in\{1,\dots,n\}$ we have that $h(\partial/\partial x^i,\partial/\partial p_j)=\delta^i_j$
and hence \begin{align*}
  h(\nabla^*_X(\partial/\partial p_j),\partial/\partial x^i)=&h(\nabla_X(\partial/\partial x^i),\partial/\partial p_j)+h(\partial/\partial x^i,\nabla^*_X(\partial/\partial p_j))\\=&Xh(\partial/\partial x^i,\partial/\partial p_j)\\=&X\delta^i_j=0
\end{align*} for every vector field $X$ on $M$. Therefore $(dx^i)^\sharp=\partial/\partial p_i$ and $\nabla^*(\partial/\partial p_i)=0$ for all $i=1,\dots,n$. This proves that $$\eta^\sharp=\sum_{i=1}^np_i(dx^i)^\sharp=\sum_{i=1}^np_i\frac{\partial}{\partial p_i}$$ and it readily seen that the latter is an Euler vector field of $(M,\nabla^*)$.
\end{proof}

\begin{proposition}\label{Legendre_2}
Let $((M, \nabla),h)$ be a radiant Hessian manifold. Then $((M, \nabla^*),h)$ is a Hessian manifold of Koszul type. In fact, if $H$ is an Euler vector field of $(M, \nabla)$, then $H^\flat$ is a $1$-form potential of $((M, \nabla^*),h)$, where $\flat$ is the musical isomorphism.
\end{proposition}
\begin{proof}
Consider radiant affine coordinates $x^1,\dots,x^n$ of $(M,\nabla)$. For each $i=1,\dots,n$, since $\nabla(dx^i)=0$, by Lemma~\ref{flat_sharp} we have $\nabla^*(\partial/\partial x^i)^\flat=0$. Recall that parallel 1-forms on an affine manifold is closed. By Poincar\'e's lemma, there exist local coordinates $p_1,\dots,p_n$ of $M$, such that $dp_i=(\partial/\partial x^i)^\flat$ for all $i=1,\dots,n$. Therefore we arrive at the expression $$h=\sum_{i=1}^ndp_i\otimes dx^i$$ for the Hessian metric $h$.

Since $x^1,\dots,x^n$ are radiant affine, it is readily seen that the vector field $$X:=\sum_{i=1}^nx^i\frac{\partial}{\partial x^i}$$ 
is a well-defined Euler vector field of $(M, \nabla)$. By the uniqueness of Euler vector field, there exists a parallel vector field $Y$ on $(M,\nabla)$, such that $H=X+Y$. Therefore, again by Lemma~\ref{flat_sharp}, we obtain that $$\nabla^*(H^\flat)=\nabla^*(Y^\flat)+\nabla^*\sum_{i=1}^nx^i(\partial/\partial x^i)^\flat=0+\sum_{i=1}^ndx^i\otimes dp_i=\sum_{i=1}^ndp_i\otimes dx^i=h$$ as required.
\end{proof}

\begin{corollary}\label{Legendre_cor}
Let $((M, \nabla),h)$ be a radiant Hessian manifold of Koszul type. Then the Hessian manifold $((M, \nabla^*),h)$ is also radiant of Koszul type.
\end{corollary}
\begin{proof}
This follows immediately from Proposition~\ref{Legendre_1} and Proposition~\ref{Legendre_2}.
\end{proof}

Now, in view of Corollary~\ref{Legendre_cor}, for a radiant Hessian manifold $((M, \nabla),h)$ of Koszul type, it is natural to ask for an explicit expression for the $1$-form potential for its dual $((M, \nabla^*),h)$.

For a vector field $X$, as per usual, we denote by $\mathcal{L}_X$ the Lie derivative along $X$.

\begin{definition}\label{Legendre_dual}
Let $((M, \nabla),h)$ be a radiant Hessian manifold of Koszul type, with Euler vector field $H$ and $1$-form potential $\eta$. The differential form $$\eta^*:=-\eta+\mathcal{L}_H\eta$$ is said to be the Legendre transform of $\eta$ along direction $H$.
\end{definition}

\begin{theorem}
Let $((M, \nabla),h)$ be a radiant Hessian manifold of Koszul type, with Euler vector field $H$ and $1$-form potential $\eta$. Then the Legendre transform $\eta^*$ of $\eta$ along direction $H$ satisfies $h=\nabla^*\eta^*$.
\end{theorem}
\begin{proof}
Consider radiant affine coordinates $x^1,\dots,x^n$ of $(M,\nabla)$ so that $$H=\sum_{i=1}^nx^i\frac{\partial}{\partial x^i}$$ is Eulerian. For each $i\in\{1,\dots,n\}$ define $p_i:=\eta(\partial/\partial x^i)$. Then $$\eta=\sum_{i=1}^np_idx^i$$ and $\nabla^*(dp_1)=\cdots=\nabla^*(dp_n)=0$. By Cartan's magic formula, we obtain $$\eta^*=-\eta+\mathcal{L}_H\eta=d(\eta(H))-\eta=d(x^1p_1+\cdots+x^np_n)-\sum_{i=1}^np_idx^i=\sum_{i=1}^nx^idp_i$$ and hence $$\nabla^*\eta^*=\sum_{i=1}^ndx^i\otimes dp_i=h$$ as required.
\end{proof}

Being both radiant and of Koszul type may seem to be a strong assumption on Hessian manifolds. However, when the underlying manifold is compact, a subtle observation of J.Vey shows that the radiant condition is redundant.

\begin{definition}
A convex domain $\Omega$ in $\mathbb{R}^d$ is said to be regular, if it contains no straight line, i.e. a $1$-dimensional affine subspace of $\mathbb{R}^d$.
\end{definition}

\begin{proposition}[Vey,1970\cite{41}]\label{Vey1970}
Let $\Omega$ be a regular convex domain in $\mathbb{R}^n$. Suppose there exists a discrete subgroup $G$ of the affine automorphism group of $\Omega$ acting freely and properly discontinuously on $\Omega$ so that $\Omega/G$ is compact. Then $\Omega$ is a cone.
\end{proposition}

We also recall a hyperbolicity condition in the context of affine differential geometry.

\begin{definition}
Let $(M,\nabla)$ be an affine manifold, and let $\pi\colon \tilde{M}\rightarrow M$ be the universal covering of $M$. The affine manifold $(M,\nabla)$ is said to be hyperbolic, if the affine development $\Phi\colon \tilde{M}\rightarrow\mathbb{R}^n$ of $(\tilde{M},\pi^*\nabla)$ is a diffeomorphism onto its image, and its image is a regular convex domain in $\mathbb{R}^n$.
\end{definition}

We observe that the notion of affine hyperbolicity is closed related to that of a Hessian metric of Koszul type.

\begin{theorem}\label{hyperbolicity}
A compact affine manifold is hyperbolic if and only if it admits a Hessian metric of Koszul type.
\end{theorem}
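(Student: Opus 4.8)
The plan is to prove the two implications separately, using the radiant–Koszul duality (Theorems \ref{Legendre_1} and \ref{Legendre_2}) as the bridge between the affine development of $\nabla$ and that of its conjugate $\nabla^*$, together with the Shima–Yagi convexity theorem (Theorem \ref{Shima_convex_thm}) and the cone description of compact radiant manifolds (Proposition \ref{radiant_cone}).

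For the implication that a Hessian metric of Koszul type forces hyperbolicity, I would start from a Koszul-type metric $h=\nabla\eta$ on the compact manifold $M$ and pass to a universal covering $\pi\colon\tilde M\to M$, so that $((\tilde M,\pi^*\nabla),\pi^*h)$ is simply connected and complete. By Theorem \ref{Shima_convex_thm} the development $\Phi\colon\tilde M\to\Omega\subseteq\mathbb{R}^n$ is a diffeomorphism onto a convex domain, and since $\tilde M$ is simply connected the closed form $\pi^*\eta$ is exact, say $\pi^*\eta=df$ with $f$ strictly convex in the affine coordinates. By Theorem \ref{Legendre_1} the conjugate connection $\nabla^*$ is radiant with Euler field $\eta^\sharp$, so by Proposition \ref{radiant_cone} applied to $\nabla^*$ the $\nabla^*$-development is an open cone; identifying this development with the gradient map $\nabla f$ shows that $C:=\nabla f(\Omega)$ is an open convex cone whose apex is not contained in $C$. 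It then remains to deduce that $\Omega$ is regular, i.e.\ line-free.

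This last step is where I expect the real difficulty, and it is precisely the point that distinguishes Koszul-type structures from general compact Hessian ones such as the flat torus. The mechanism is the Legendre duality between the lineality directions of $\Omega$ and the salience of the dual cone $C$: a line direction $v$ in $\Omega$ makes $t\mapsto f(x_0+tv)$ a strictly convex function on all of $\mathbb{R}$, whose derivative sweeps out the values $\langle w,v\rangle$ for $w$ ranging over the cone $C$; the interaction of this monotone one-parameter behaviour with the scaling invariance of $C$ and the global-diffeomorphism property of $\nabla f$ forces $C$ to contain a full line, contradicting the positivity $\nabla\eta=h>0$ that underlies the diffeomorphism. Making this contradiction precise — equivalently, proving that the gradient map of a complete strictly convex potential carries a non-regular convex domain onto a non-salient cone, and conversely — is the technical heart of this direction, and I would isolate it as a convex-analytic lemma about Legendre transforms.

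For the converse, that hyperbolicity yields a Koszul-type metric, I would invoke the classical Koszul–Vinberg characteristic function. Writing $\Omega\subseteq\mathbb{R}^n$ for the regular convex development and $\Gamma=\pi_1(M)$ for its cocompact affine automorphism group (which is torsion-free and infinite by Proposition \ref{torsion_free} and Corollary \ref{infty_pi_1}), I would form the cone $\hat\Omega\subseteq\mathbb{R}^{n+1}$ over $\Omega$; regularity of $\Omega$ is exactly what makes $\hat\Omega$ salient, so that the characteristic integral converges and $\varphi:=\log\chi$ has positive-definite Hessian. Restricting the resulting Hessian metric to the affine section $\Omega$ produces a canonical $\mathrm{Aff}(\Omega)$-invariant Hessian metric, which therefore descends to a Hessian metric $h$ on $M$. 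Finally, because $\varphi$ transforms under each automorphism by an additive constant, coming from the factor $|\det|^{-1}$ in the transformation law of $\chi$, the differential of its restriction is $\Gamma$-invariant and descends to a global $1$-form $\eta$ on $M$ with $\nabla\eta=h$, exhibiting $((M,\nabla),h)$ as being of Koszul type. The points requiring care here are the convergence and homogeneity of the characteristic function and the verification that the descended form is a genuine $1$-form potential rather than merely a local one.
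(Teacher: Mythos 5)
The paper does not actually prove this statement; it simply cites Shima \cite{7}, where it appears as Koszul's theorem. So you are attempting a proof the paper omits. Your second direction (hyperbolic $\Rightarrow$ Koszul type) via the Koszul--Vinberg characteristic function of the cone $\hat\Omega$ over the developing image is the standard and correct argument: sharpness of $\hat\Omega$ is equivalent to $\Omega$ being line-free, the convergence, log-homogeneity and positive-definiteness of $\operatorname{Hess}(\log\chi)$ on a sharp cone are classical, and the transformation law $\chi\circ A=|\det A|^{-1}\chi$ does make $d(\log\chi)$ restricted to the affine slice $\Gamma$-invariant, so it descends to a genuine $1$-form potential. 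The details you flag there are routine.

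The first direction, however, has a genuine gap, and the mechanism you propose for closing it cannot work as stated. After Shima--Yagi gives convexity of $\Omega=\Phi(\tilde M)$, everything hinges on showing $\Omega$ contains no complete line, and the only structural input you bring to bear is that the $\nabla^*$-development image $C$ is an open convex cone not containing its apex (Theorem \ref{Legendre_1} plus Proposition \ref{radiant_cone}). But that conclusion is far too weak: an open convex cone missing its apex can perfectly well contain complete lines --- an open half-space is such a cone --- and indeed there exist compact radiant affine manifolds whose developing image is a non-sharp cone (e.g.\ the torus obtained as the quotient of the upper half-plane $\{y>0\}\subset\mathbb{R}^2$ by the group of linear maps generated by $\operatorname{diag}(\lambda,\lambda)$ and the unipotent shear $(x,y)\mapsto(x+y,y)$). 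So even if you could force ``$C$ contains a full line,'' that is not a contradiction with anything you have established; your appeal to ``the positivity $\nabla\eta=h>0$ that underlies the diffeomorphism'' does not identify an actual inconsistency. The true proof of this implication (Koszul, 1965) must exploit the cocompact equivariance $f\circ\gamma=f+c_\gamma$ of the potential under the holonomy action in an essential way, not merely the cone structure of the dual development; as written, you have deferred precisely this step to an unproved ``convex-analytic lemma'' whose supporting heuristic is unsound, so the forward implication remains unproven.
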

\begin{proof}
This theorem is proved in \cite{7}.
\end{proof}

The following Corollaries are then natural and facile.

\begin{corollary}\label{Vey_corollary}
A compact Hessian manifold is radiant if and only if it is of Koszul type.
\end{corollary}
\begin{proof}
By Theorem~\ref{hyperbolicity} and Proposition~\ref{Vey1970}, a compact Hessian manifold of Koszul type is automatically radiant.

Let $((M,\nabla),h)$ be a compact radiant Hessian manifold. Then by Proposition~\ref{Legendre_2}, we have that $((M,\nabla^*),h)$ is a compact Hessian manifold of Koszul type, and hence is radiant. Now, since $\nabla^{**}=\nabla$, the Hessian manifold $((M,\nabla),h)$ is also of Koszul type by Corollary~\ref{Legendre_cor}.
\end{proof}

\begin{corollary}\label{Euler_field_uniqueness}
Let $((M,\nabla),h)$ be a compact radiant Hessian manifold. Then the Euler vector field of $(M,\nabla)$ is unique.
\end{corollary}
\begin{proof}
Let $X$ and $Y$ be Euler vector fields on $(M,\nabla)$.
By Proposition~\ref{Legendre_2}, we have that $((M,\nabla^*),h)$ is of Koszul type, and $X^\flat,Y^\flat$ are both 1-form potentials of $h$.
By Corollary~\ref{Vey_corollary}, since $((M,\nabla^*),h)$ is a compact Hessian manifold of Koszul type, it is also radiant.
Therefore 
$$\nabla^*(X^\flat-Y^\flat)=\nabla^*(X^\flat)-\nabla^*(Y^\flat)=h-h=0$$ 
and hence $X^\flat-Y^\flat$ is a parallel 1-form on $(M,\nabla^*)$. Since $(M,\nabla^*)$ is radiant, by Theorem~\ref{radiant_parallel1}, we have $X^\flat-Y^\flat=0$, which implies $X=Y$.
\end{proof}

\begin{corollary}\label{1-form_potential_uniqueness}
Let $((M,\nabla),h)$ be a compact Hessian manifold of Koszul type. Then the $1$-form potential of $((M,\nabla),h)$ is unique.
\end{corollary}
\begin{proof}
Recall Corollary~\ref{Vey_corollary}, and then apply Corollary~\ref{Euler_field_uniqueness} and Propositon~\ref{Legendre_1}.
\end{proof}

\begin{remark}
If the Hessian manifold $((M,\nabla),h)$ under consideration satisfies 
\begin{enumerate}
    \item [1.]$M=\Omega$ is a convex domain in $\mathbb{R}^d$;
    \item [2.]$\nabla$ is the Levi-Civita connection of the $d$-dimensional flat Euclidean space;
    \item [3.]$h$ is the Hessian quadratic form of a strictly convex potential, 
\end{enumerate}
then the construction in Definition~\ref{Legendre_dual} recovers the Legendre transformation in classical mechanics.  
\end{remark}

\subsection{The Cheng-Yau Metric}

Amongst all compact affine manifolds, those being hyperbolic are special, as they admit canonical Hessian metrics. We shall explain this observation in the present section.

For a smooth function $u$ defined on a domain in $\mathbb{R}^n$, we write $$\nabla u:=\left[\begin{array}{ccc}\partial_1u \\ \vdots \\ \partial_nu  \end{array}\right],\quad \operatorname{Hess}(u):=\left[\begin{array}{ccc}\partial_1\partial_1u & \cdots & \partial_1\partial_nu \\ \vdots & \ddots & \vdots \\ \partial_n\partial_1u & \cdots & \partial_n\partial_nu \end{array}\right],$$ where $\partial_i=\partial/\partial x^i$ for $i=1,\dots,n$, and $x^1,\dots,x^n$ are the standard coordinates of $\mathbb{R}^n$.

Recall the following existence and uniqueness result on real Monge–Ampère equation due to Cheng and Yau.

\begin{theorem}[Cheng-Yau, 1982\cite{27}]\label{Cheng-Yau}
Let $\Omega$ be a regular convex cone in $\mathbb{R}^n$. Then, for any positive real number $\Lambda\in\mathbb{R}$, there exists a unique convex function $u\in C^\infty(\Omega)$ satisfying the following properties:
\begin{enumerate}
    \item [1.]$\operatorname{det}(\operatorname{Hess}(u))=\operatorname{exp}(\Lambda u)^2$;
    \item [2.]$u(\vec{x})\rightarrow\infty$ as $\vec{x}\rightarrow\partial\Omega$;
\end{enumerate}
so that in particular $g=(\partial_i\partial_ju)dx^i\otimes dx^j$ is a complete Riemannian metric on $\Omega$.
\end{theorem}

\begin{definition}\label{Cheng-Yau_potential}
For a regular convex cone $\Omega$ in $\mathbb{R}^n$ and a positive real number $\Lambda\in\mathbb{R}$, the convex function $u\in C^\infty(\Omega)$ satisfying the properties listed in Theorem~\ref{Cheng-Yau} is termed the Cheng-Yau potential of $\Omega$ with cosmological constant $\Lambda>0$.
\end{definition}

\begin{proposition}
Let $\Omega$ be a regular convex cone in $\mathbb{R}^n$, and let $u\in C^\infty(\Omega)$ be a Cheng-Yau potential of $\Omega$. Then $u$ satisfies $du=L^*du$ for every linear automorphism $L$ of $\Omega$.
\end{proposition}
\begin{proof}
See \cite{40}, or see \cite{27} for the original proof.
\end{proof}

The differential of a Cheng-Yau potential enjoys one more important property, namely, it is always a covector field of constant length.

\begin{lemma}\label{tensor_calc}
Let $\Omega$ be a regular convex cone in $\mathbb{R}^n$, and let $u\in C^\infty(\Omega)$ be the Cheng-Yau potential of $\Omega$ with cosmological constant $\Lambda>0$. Then $u\in C^\infty(\Omega)$ satisfies $\langle\nabla u(\vec{x}),\vec{x}\rangle\equiv-n/\Lambda$, and $$\operatorname{Hess}(u)^{-1}\nabla u(\vec{x})=-\vec{x}$$ for all $\vec{x}\in\Omega$, that is, $-\operatorname{Hess}(u)^{-1}\nabla u$ is the standard Euler vector field of $\Omega$. In particular, it holds that $\langle\nabla u,\operatorname{Hess}(u)^{-1}\nabla u\rangle\equiv n/\Lambda$.
\end{lemma}
\begin{proof}
For any $t>0$, define function $u_t\in C^\infty(\Omega)$ by $u_t(\vec{x}):=u(t\cdot\vec{x})+n\cdot\operatorname{log}(t)/\Lambda$. Then straightforward computation yields that $u_t$ is also a Cheng-Yau potential of $\Omega$ for every $t>0$. Therefore, by the uniqueness part of Theorem~\ref{Cheng-Yau}, we obtain the functional equation $$u(\vec{x})-u(t\cdot\vec{x})=n\cdot\operatorname{log}(t)/\Lambda$$ where $\vec{x}\in\Omega$ and $k>0$. Now, for any $\vec{x}\in\Omega$, since $$\frac{u(\vec{x}+t\cdot\vec{x})-u(\vec{x})}{(n/\Lambda)t}=-\operatorname{log}(1+t)/t\rightarrow-1$$ as $t\rightarrow0$, we have that 
\begin{equation}\label{tensor_calc_1}
\langle\nabla u(\vec{x}),\vec{x}\rangle\equiv-n/\Lambda.
\end{equation}
Differentiate both sides of equation(~\ref{tensor_calc_1}) again, we obtain that
\begin{equation}\label{tensor_calc_2}
\operatorname{Hess}(u)^{-1}\nabla u(\vec{x})=-\vec{x}
\end{equation}
holds for all $\vec{x}\in\Omega$. Substitute equation(~\ref{tensor_calc_2}) back into equation(~\ref{tensor_calc_1}), we finally arrive at $\langle\nabla u,\operatorname{Hess}(u)^{-1}\nabla u\rangle\equiv n/\Lambda$ as required.
\end{proof}

Adapt Cheng and Yau's result to our setting, we immediately obtain the following crucial observation.

\begin{proposition}\label{Cheng-Yau 1-form potential}
Let $(M,\nabla)$ be a compact hyperbolic affine manifold. Then, for any positive real number $\Lambda\in\mathbb{R}$, there exists a unique space-like Lagrangian embedding $\eta\in\Omega^1(M)$ of $(M,\nabla)$ such that $\varkappa=\Lambda\eta$, where $\varkappa$ is the first Koszul form of the Hessian manifold $((M,\nabla),h)$ and here $h=\nabla\eta$.
\end{proposition}
\begin{proof}
Let $\pi\colon\tilde{M}\rightarrow M$ be the universal covering of $M$, and let $\Phi\colon\tilde{M}\rightarrow\mathbb{R}^n$ be the affine development of $(\tilde{M},\pi^*\nabla)$. It follows from Proposition~\ref{radiant_cone_2} and the hyperbolicity of $(M,\nabla)$ that $\Omega:=\Phi(\tilde{M})$ is a regular convex cone in $\mathbb{R}^n$. Let $f\in C^\infty(\Omega)$ be the Cheng-Yau potential of $\Omega$.
By the properties of the Cheng-Yau potential, the 1-form $$\Lambda\cdot df=\frac{1}{2}d\operatorname{log}(\operatorname{det}(\operatorname{Hess}(f)))$$ descends to $\Lambda\cdot\eta=\varkappa$ on $(M,\nabla)$ so that $\eta$ is a space-like Lagrangian embedding of $(M,\nabla)$ and $\varkappa$ is the first Koszul form of $((M,\nabla),\nabla\eta)$. The uniqueness of $\eta$ also follows immediately from the uniqueness of the Cheng-Yau potential.
\end{proof}

Proposition~\ref{Cheng-Yau 1-form potential} motivates the following more general definition.

\begin{definition}\label{Cheng-Yau_metric}
Let $((M,\nabla),h)$ be a compact oriented Hessian manifold with first Koszul form $\varkappa$ and cosmological constant $\Lambda>0$. The metric $h$ is said to be a Cheng-Yau metric if $\nabla\varkappa=\Lambda\cdot h$. A compact oriented Hessian manifold $((M,\nabla),h)$ is termed a Cheng-Yau manifold if $h$ is a Cheng-Yau metric.
\end{definition}

We observe a property of Cheng-Yau metrics that their $1$-form potentials are self-dual under Legendre transformation.

\begin{proposition}\label{Euler_field_of _Cheng-Yau}
Let $((M,\nabla),h)$ be a Cheng-Yau manifold with cosmological constant $\Lambda>0$. Let $\eta$ be the $1$-form potential of $((M,\nabla),h)$, and $\eta^*$ its Legendre transform. Then $-\eta^\sharp$ is the unique Euler field of $(M,\nabla)$, and it holds that $\eta^*=-\eta$, where $\sharp$ is the musical isomorphism.
\end{proposition}
\begin{proof}
For the first assertion, it suffices to lift $\eta\in\Omega^1(M)$ to the universal covering and apply Lemma~\ref{tensor_calc}. The uniqueness follows from Corollary~\ref{Euler_field_uniqueness}.

Again by Lemma~\ref{tensor_calc}, we have $$\eta(\eta^\sharp)=|\eta|^2=\operatorname{dim}(M)/\Lambda$$ is a constant function. By Cartan's magic formula and the definition of Legendre transformation, we have $$\eta^*=d(\eta(-\eta^\sharp))-\eta=-\eta$$ as required.
\end{proof}

\section{Fibration and Splitting Theorems}

\subsection{A Naive Attempt via Tischler's Construction}

Before we delve deeply into the structure theorems of Hessian manifolds, we first suggest here several naive observations on splitting of Hessian manifolds, which all relies on the following fundamental result of Tischler on fibration over circles.

\begin{theorem}[Tischler, 1970\cite{11}]\label{Tischler}
Let $M$ be a compact differentiable manifold. If $M$ admits a nowhere vanishing closed 1-form, then $M$ is a mapping torus.
\end{theorem}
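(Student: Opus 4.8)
The plan is to establish the classical fact that a nowhere vanishing closed $1$-form furnishes a fibration of $M$ over the circle $S^1$, whose total space is by definition a mapping torus. Write $\omega\in\Omega^1(M)$ for the given nowhere vanishing closed $1$-form. The strategy proceeds in three stages: first approximate $\omega$ by a closed form with rational periods that is still nowhere vanishing, then integrate this form to obtain a smooth submersion onto $S^1$, and finally invoke Ehresmann's fibration theorem to upgrade the submersion to a locally trivial bundle.

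First I would exploit that being nowhere vanishing is an open condition in the $C^0$-topology on $1$-forms, together with the compactness of $M$. The periods of $\omega$ assemble into an element $[\omega]\in\mathrm{H}^1(M;\mathbb{R})$. Fix a finite-dimensional subspace $V\subseteq\Omega^1(M)$ of closed forms mapping isomorphically onto $\mathrm{H}^1(M;\mathbb{R})$ (for instance the harmonic forms of an auxiliary metric), and write $\omega=\omega_V+dg$ with $\omega_V\in V$. Since classes with rational periods are dense in $\mathrm{H}^1(M;\mathbb{R})$, I may pick such a class arbitrarily close to $[\omega]$, with canonical representative $\omega_V'\in V$ close to $\omega_V$; the form $\omega':=\omega_V'+dg$ is then cohomologous to a rational class and $C^0$-close to $\omega$. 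Because $M$ is compact, a sufficiently fine approximation keeps $\omega'$ nowhere vanishing, and after multiplying by a suitable positive integer I may assume $\omega'$ has integral periods.

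Next, a closed $1$-form $\omega'$ with integral periods is the pullback $f^*(d\theta)$ of the standard angular form under a smooth map $f\colon M\to S^1=\mathbb{R}/\mathbb{Z}$, obtained concretely by integrating $\omega'$ from a basepoint and reducing modulo $\mathbb{Z}$; this is just the correspondence between the image of $\mathrm{H}^1(M;\mathbb{Z})$ in $\mathrm{H}^1(M;\mathbb{R})$ and homotopy classes of maps into $S^1=K(\mathbb{Z},1)$. Since $\omega'=f^*(d\theta)$ is nowhere vanishing, the differential $df$ is everywhere surjective, so $f$ is a submersion. Finally, compactness of $M$ makes $f$ proper, and Ehresmann's theorem shows that $f\colon M\to S^1$ is a locally trivial fibre bundle over the connected base $S^1$, which is precisely a mapping torus of the monodromy diffeomorphism of a fibre.

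The main obstacle I anticipate is the approximation step: one must verify that the perturbation realising a nearby rational class can be taken small enough in $C^0$-norm to preserve the nowhere-vanishing condition \emph{uniformly} over $M$. It is exactly this uniform control—guaranteed here by compactness—rather than the subsequent topological machinery, that legitimises the passage from real to integral periods; the integration and Ehresmann steps are then formal.
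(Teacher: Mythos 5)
Your argument is correct: it is precisely the classical proof of Tischler's theorem (rational approximation of the cohomology class keeping the form nowhere vanishing by compactness, integration to a submersion onto $S^1$, then Ehresmann). The paper does not prove this statement itself but simply cites Tischler's 1970 article, which uses exactly this argument, so there is nothing to compare beyond noting that your write-up correctly handles the one genuine subtlety, namely the uniform $C^0$ control of the perturbation.
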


\begin{proposition}\label{mapping_tori_2}
Let $((M,\nabla),h)$ be an oriented compact Hessian manifold. Then either $(M,h)$ is a flat Riemannian manifold, or there exists a fiber bundle $\varphi\colon M\rightarrow\mathbb{S}^1$ with connected and oriented fiber.
\end{proposition}
\begin{proof}
Let $\varkappa$ be the first Koszul form of $((M,\nabla),h)$, which is closed by definition. If $\varkappa=0$, then $(M,h)$ is a flat Riemannian manifold by Theorem~\ref{Shima_flat_thm}. We therefore assume $\varkappa\neq0$. But then since $\bar{\nabla}\varkappa=0$ by Proposition~\ref{Shima_parallel_thm}, we have that $\varkappa$ is nowhere vanishing by parallel transport. 

The desired result now follows from Theorem~\ref{Tischler}.
\end{proof}

We now can confirm Chern's conjecture in our special case:

\begin{corollary}\label{Euler}
The Euler characteristic of a compact Hessian manifold is zero.
\end{corollary}
\begin{proof}
By considering the orientation double cover of $M$, without loss of generality we may assume $M$ is orientable.

By Proposition~\ref{mapping_tori_2}, $M$ is homeomorphic to a mapping torus or a Bieberbach manifold, and hence $\chi(M)=0$ by the multiplicativity of Euler characteristic.
\end{proof}

By showing that the $1$-form potential is also nowhere vanishing, we give an alternative proof of the fibration theorem.

\begin{proposition}\label{non-vanishing_potential}
The $1$-form potential of a compact Hessian manifold of Koszul type is a nowhere vanishing closed 1-form.
\end{proposition}
\begin{proof}
Let $((M,\nabla),h)$ be a compact Hessian manifold of Koszul type, and let $\eta\in\Omega^1(M)$ be its $1$-form potential.

View $\eta\colon M\rightarrow T^*M$ as an embedding. Then, $\eta$ vanishes at $p\in M$ if and only if $p$ is an intersection point of $\eta\colon M\rightarrow T^*M$ and the zero section $\sigma$ of the cotangent bundle of $M$. Since $\nabla\eta=h>0$, each intersection point of $\eta$ and $\sigma$ is a nondegenerate transverse intersection, and the intersection numbers at those intersection points are all positive. But, on the other hand, the sum of the intersection numbers at the intersection points of $\eta$ and $\sigma$ is precisely $$\langle e(T^*M),[M]\rangle=\chi(M)=0$$ by Proposition~\ref{Euler}. This proves that the sections $\eta$ and $\sigma$ must have no intersection, i.e. $\eta$ is a nowhere vanishing closed 1-form on $M$. 
\end{proof}

\begin{corollary}\label{mapping_tori_1}
Let $((M,\nabla),h)$ be a compact Hessian manifold of Koszul type. Then there exists a fiber bundle $\varphi\colon M\rightarrow\mathbb{S}^1$ with connected fiber. Moreover, if $M$ is oriented, then the fiber of $\varphi\colon M\rightarrow\mathbb{S}^1$ is also oriented.
\end{corollary}
\begin{proof}
The desired result follows immediately from Theorem~\ref{Tischler} and Proposition~\ref{non-vanishing_potential}.
\end{proof}

\subsection{Rational Approximation of Affine Structures}

Making use of the existence of Cheng-Yau metric, we now introduce an important property of compact hyperbolic affine manifolds.

\begin{definition}
The first Koszul class of the canonical line bundle of an affine manifold $(M,\nabla)$ is said to be the canonical class of $(M,\nabla)$.
\end{definition}

We recall that, by Proposition~\ref{form_to_class}, when the affine manifold under consideration is equipped with a Hessian metric, its first Koszul form represents precisely its canonical class.

Compact orientable Hessian manifolds with rational canonical classes are particularly well-behaved, as their mapping tori structures are compatible with their Hessian metrics.

\begin{theorem}\label{rational_canonical _class_fibering}
Let $((M,\nabla),h)$ be a compact oriented Hessian manifold with non-zero rational canonical class. Then there exists a Riemannian submersion $f\colon M\rightarrow\mathbb{S}^1$ with connected and oriented fiber $F$, such that the monodromy of $f\colon M\rightarrow\mathbb{S}^1$ is an isometry of $(F,h|_F)$ and in particular is of finite order.
\end{theorem}
\begin{proof}
Let $\varkappa$ be the first Koszul form and $\bar{\nabla}$ the Levi-Civita connection of $((M,\nabla),h)$. Since $\bar{\nabla}\varkappa=0$ by Proposition~\ref{Shima_parallel_thm} and $\varkappa\neq0$ by assumption, we have that $\varkappa$ is nowhere vanishing via parallel transport. 

Since $[\varkappa]\in\mathrm{H}^1(M;\mathbb{Q})$, the rational number $$m:=\operatorname{min}\{k\in\mathbb{R}:k>0,k[\varkappa]\in\mathrm{H}^1(M;\mathbb{Z})\}$$ is well-defined.
Fix a base point $b\in M$, and define the smooth mapping $f\colon M\rightarrow\mathbb{S}^1$ via $$f(a):=\operatorname{exp}\left(2\pi\sqrt{-1}\int_\gamma m\varkappa\right)$$ where $\gamma\colon[0,1]\rightarrow M$ is a smooth curve with $\gamma(0)=b$ and $\gamma(1)=a$. Since the periods of $m[\varkappa]$ are integers, the map $f$ is well-defined. Moreover, since $\varkappa$ is nowhere vanishing, we have that $f$ is a submersion. By Ehresmann's fibration theorem, we have that $f\colon M\rightarrow\mathbb{S}^1$ is a fiber bundle. Denote by $F$ the fiber of $f\colon M\rightarrow\mathbb{S}^1$. 

Identifying each homotopy class in $\pi_1(\mathbb{S}^1)$ with its winding number, it is readily seen that the induced homomorphism $f_*\colon\pi_1(M)\rightarrow\pi_1(\mathbb{S}^1)=\mathbb{Z}$ satisfies $$f_*([\gamma])=\int_\gamma m\varkappa$$ for all $[\gamma]\in\pi_1(M)$. Since $\varkappa\neq0$, we have that $\operatorname{Im}(f_*)\neq0$. Therefore, there exists a positive integer $d\in\mathbb{Z}$ such that $\operatorname{Im}(f_*)=d\mathbb{Z}$. But by the minimality of the factor $m$, we must have that $d=1$ and hence $\operatorname{Im}(f_*)=\mathbb{Z}$. Now, since $M$  is connected and $f_*$ is surjective, the long exact sequence 
$$
\cdots\longrightarrow\pi_1(M)\longrightarrow\pi_1(\mathbb{S}^1)\longrightarrow\pi_0(F)\longrightarrow\pi_0(M)\longrightarrow\cdots
$$
of homotopy groups yields that $F$ is also a connected topological space.

Moreover, since $f\colon M\rightarrow\mathbb{S}^1$ is a submersion between compact differentiable manifolds, we have that $F$ is also a compact differentiable manifold. Also, the fiber $F$ is oriented as $M$ is.

Denote by $\varphi$ the monodromy of the mapping torus $f\colon M\rightarrow\mathbb{S}^1$. Let $\pi\colon\tilde{M}\rightarrow M$ be the universal covering of $M$. Then there exists a complete Riemannian metric $g$ on $\mathbb{R}^n$ such that $(\tilde{M},\pi^*h)$ is isometric to the product of Riemannian manifolds $(\mathbb{R}^n,g)$ and $(\mathbb{R},dt^2)$. Denote by $\hat{\varphi}\colon\mathbb{R}^n\rightarrow\mathbb{R}^n$ the lifting of $\varphi\colon F\rightarrow F$ to the universal covering spaces. Since $\pi_1(M)=\pi_1(F)\rtimes_\varphi\mathbb{Z}$ acts on $(\mathbb{R}^n\times\mathbb{R},g\oplus dt^2)$ by isometry, in particular the smooth mapping $$\Phi\colon\mathbb{R}^n\times\mathbb{R}\rightarrow\mathbb{R}^n\times\mathbb{R},\quad(\vec{x},t)\mapsto(\hat{\varphi}(\vec{x}),t+1)$$ is an isometry. Therefore we conclude that $\hat{\varphi}$ and hence $\varphi$ are isometries. Furthermore, since $f\colon M\rightarrow\mathbb{S}^1$ lifts to the coordinate projection $\mathbb{R}^n\times\mathbb{R}\rightarrow\mathbb{R}$ on universal covering spaces, we conclude that $f$ is a Riemannian submersion.

Denote by $i\colon F\rightarrow M$ the inclusion map of the fiber of the fiber bundle $f\colon M\rightarrow\mathbb{S}^1$. Consider the subgroup $A$ of the isometry group $G$ of $(F,i^*h)$ generated by $\varphi$, and denote by $\bar{A}$ its closure in $G$. Then it is readily seen that $\bar{A}$ is a compact abelian subgroup of $G$, and hence the identity component of $\bar{A}$ is a torus, Therefore, $\varphi$ is isotopic in $G$ to a group element of finite order.
\end{proof}

We shall show that, on an oriented compact manifold, an hyperbolic affine structure can always be perturbed into another hyperbolic affine structure of which canonical class is rational.

We first introduce the moduli space of radiant affine structures on a differentiable manifold.

\begin{lemma}\label{moduli}
Let $M$ be an $n$-dimensional differentiable manifold and $G:=\pi_1(M)$ its fundamental group. Then the moduli space of radiant affine structures on $M$ is a real analytic subvariety of the character variety $\operatorname{Hom}(G,\operatorname{GL}_n(\mathbb{R}))/\operatorname{GL}_n(\mathbb{R})$.
\end{lemma}
\begin{proof}
The moduli space of flat connections on the frame bundle of $M$ is the character variety $\operatorname{Hom}(G,\operatorname{GL}_n(\mathbb{R}))/\operatorname{GL}_n(\mathbb{R})$, whilst the torsion-free condition imposes a system of transcendental equations on $\operatorname{Hom}(G,\operatorname{GL}_n(\mathbb{R}))/\operatorname{GL}_n(\mathbb{R})$.
\end{proof}

To proceed, we adapt a theorem of J.-L.Koszul to our setting.

\begin{theorem}[Koszul, 1968\cite{33}]\label{Koszul_open}
Let $(M,\nabla)$ be a compact hyperbolic affine manifold. Then the collection of hyperbolic affine structures on $M$ is an non-empty open subset of the moduli space of radiant affine structures on $M$.
\end{theorem}

Applying Theorem~\ref{Koszul_open} together with group cohomologies with Lie algebra coefficients, we can prove the following density or approximation theorem:

\begin{definition}
For an affine manifold $(M,\nabla)$, a sequence $\{\nabla^i\}_{i=1}^\infty$ of flat torsion-free connections on the tangent bundle $T M \rightarrow M$ is said to be a convergent sequence of differential operators with limit $\nabla$, if for any vector fields $X, Y$ on $M$, the sequence $\{\nabla^i_XY\}_{i=1}^\infty$ of vector fields converges uniformly to $\nabla_XY$.
\end{definition}

\begin{theorem}\label{hessian_approximation}
Let $(M,\nabla)$ be a compact hyperbolic affine manifold. Then there exists a sequence $\{\nabla^i\}_{i=1}^\infty$ of hyperbolic affine structures converging to $\nabla$, such that for every integer $i\geq1$ the canonical class of $(M,\nabla^i)$ is rational.
\end{theorem}
\begin{proof}
Let $\rho_0 \colon \pi_1(M) \to \operatorname{GL}_n(\mathbb{R})$ denote the holonomy representation of $\nabla$. For any character $\chi \in \operatorname{Hom}(\pi_1(M), \mathbb{R})$, define the perturbed representation by $\rho_\chi(g) = \exp(\chi(g)) \rho(g)$.

By Theorem~\ref{Koszul_open}, for $\chi$ sufficiently close to the trivial character, the representation $\rho_\chi$ is the holonomy of a hyperbolic affine structure on $M$. The canonical class of this perturbed structure is given by $[\varkappa_{\chi}] = [\varkappa] + n\chi$.

Now it suffices to notice that $\chi$ can be chosen to be an arbitrary small class in $\mathrm{H}^1(M; \mathbb{R})$, and rational cohomology classes are dense in $\mathrm{H}^1(M; \mathbb{R})$.
\end{proof}

Theorem~\ref{hessian_approximation} has profound consequence, which significantly strengthen the previous Corollary~\ref{mapping_tori_1}.

\begin{corollary}\label{hyperbolic_affine_fibering}
Let $(M,\nabla)$ be a compact oriented hyperbolic affine manifold. Then there exists a fiber bundle $f\colon M\rightarrow\mathbb{S}^1$ with connected and oriented fiber, such that the monodromy of the mapping torus $f\colon M\rightarrow\mathbb{S}^1$ is periodic.
\end{corollary}
\begin{proof}
Equip $(M,\nabla)$ with its Cheng-Yau metric $h$, and denote by $\bar{\nabla}$ the Levi-Civita connection of $(M,h)$.

By Theorem~\ref{hyperbolicity} and Corollary~\ref{Vey_corollary}, we obtain that $(M,\nabla)$ is a compact radiant affine manifold. By Theorem~\ref{radiant_parallel2}, we have that $(M,\nabla)$ does not admit a parallel volume form. Therefore, by Theorem~\ref{Shima_flat_thm}, the first Koszul form $\varkappa$ of $((M,\nabla),h)$ is not identically zero. Since $\bar{\nabla}\varkappa=0$ by Proposition~\ref{Shima_parallel_thm}, $\varkappa$ is a nowhere vanishing harmonic 1-form. Therefore, the cohomology class $[\varkappa]\in\mathrm{H}^1(M;\mathbb{R})$, which is the canonical class $K$ of $(M,\nabla)$, is non-trivial. By Theorem~\ref{hessian_approximation}, we may assume $K$ is a non-trivial integral cohomology class. Now, it suffices to apply Theorem~\ref{rational_canonical _class_fibering} to the Hessian manifold $((M,\nabla),h)$.
\end{proof}

\subsection{Convex Decomposition Lemma}

We shall now prove yet another splitting theorem for compact Hessian manifolds, showing that up to a finite covering every compact Hessian manifold is a product of a hyperbolic affine manifold with a flat torus.

We start by introducing an easy observation:

\begin{lemma}\label{Hessian_submfd}
Let $((M,D),g)$ be a Hessian manifold and $(N,\nabla)$ an affine manifold. Suppose that $i\colon N\rightarrow M$ is an affine immersion. Then $((N,\nabla),i^*g)$ is a Hessian manifold.
\end{lemma}
\begin{proof}
Since $i\colon N\rightarrow M$ is affine, we have that $\nabla=i^*D$, and hence $\nabla(i^*g)=i^*(Dg)$. 
Since $((M,D),g)$ is a Hessian manifold, we obtain that $\nabla(i^*g)$ is totally symmetric by Theorem~\ref{Hessian_basic_thm}.
Therefore $\nabla(i^*g)$ is totally symmetric as $Dg$ is. Again by Theorem~\ref{Hessian_basic_thm}, we conclude that $((N,\nabla),i^*g)$ is a Hessian manifold.
\end{proof}

We will also need the following result on the famous Auslander conjecture.

\begin{theorem}[Abels et al.,2002\cite{42}]\label{Auslander_conj}
Let $(M,\nabla)$ be a compact affine manifold of dimension $n\leq 6$, and $\pi\colon\tilde{M}\rightarrow M$ the universal covering of $M$. Let $\Phi\colon\tilde{M}\rightarrow\mathbb{R}^n$ be the affine development of $(\tilde{M},\pi^*\nabla)$. Suppose that $\Phi$ is a diffeomorphism. Then $(M,\nabla)$ admits a nowhere vanishing $n$-form $\Omega$ such that $\nabla\Omega=0$.
\end{theorem}

Recall that every convex domain in $\mathbb{R}^d$ can be written as the Cartesian product of a regular convex domain and a affine subspace of $\mathbb{R}^d$. This fact motivates the following splitting theorem for compact Hessian manifolds.

\begin{theorem}\label{convex_splitting}
Let $((M,\nabla),h)$ be a compact Hessian manifold of dimension $n\leq 6$. Then there exist a finite covering map $f\colon E\rightarrow M$ and a compact hyperbolic affine manifold $(B,\nabla_B)$ of dimension $k\leq n$ such that 
\begin{enumerate}
    \item [1.] $(E,f^*\nabla)$ is the product of affine manifolds $(B,\nabla_B)$ and $(\mathbb{T}^{n-k},D)$;
    \item [2.] $f^*h$ is the direct product of Riemannian metric $g$ and $\delta$,
\end{enumerate}
where $g$ here is a Hessian metric on $(B,\nabla_B)$, and $D$ is the Levi-Civita connection of the flat torus $(\mathbb{T}^{n-k},\delta)$.
\end{theorem}
\begin{proof}
Let $\pi\colon\tilde{M}\rightarrow M$ be the universal covering of $M$ and $\Phi\colon\tilde{M}\rightarrow\mathbb{R}^n$ the affine development of $(\tilde{M},\pi^*D)$. By Theorem~\ref{Shima_convex_thm}, the image $\Phi(\tilde{M})$ is a convex domain in $\mathbb{R}^n$, and $\pi_1(M)$ acts freely and properly discontinuously on $\Phi(\tilde{M})$ by affine transformation, so that $M=\Phi(\tilde{M})/\pi_1(M)$. By the splitting theorem for convex sets, there exists an integer $0\leq d\leq n$ and a regular convex domain $\Omega \subset \mathbb{R}^{n-d}$, such that $\Phi(\tilde{M})=\Omega\times\mathbb{R}^d$.

Since $\Omega$ contains no affine subspaces of positive dimension, for any $\gamma\in\pi_1(M)$, there exists $\phi_\gamma\in\operatorname{Aff}(\Omega)$ and $U_\gamma\in\operatorname{GL}(\mathbb{R}^d)$ such that $$\gamma(\vec{x},\vec{y})=(\phi_\gamma(\vec{x}),T_\gamma\vec{x}+U_\gamma\vec{y}+\vec{v}_\gamma)$$ for some real $(n-d)\times d$-matrix $T_\gamma$ and some vector $\vec{v}_\gamma\in\mathbb{R}^d$. Consider the homomorphism $$\phi\colon\pi_1(M)\rightarrow\operatorname{Aff}(\Omega), \quad \gamma\mapsto \phi_\gamma.$$ For each $\vec{x}\in\Omega$ denote $$N_{\vec{x}}:=\frac{\{\vec{x}\}\times\mathbb{R}^d}{\operatorname{Ker}(\phi)}$$ where the normal subgroup $K:=\operatorname{Ker}(\phi)$ of $\pi_1(M)$ acts naturally on the vertical fiber $\{\vec{x}\}\times\mathbb{R}^d$ by affine transformation. 

By Selberg's Lemma, the finitely generated matrix group $H:=\operatorname{Im}(\phi)$ admits a torsion-free subgroup of finite index. By lifting to a finite-sheet covering space of $M$, we assume without loss of generality that $H$ is itself torsion-free. In particular, the $H$-action on $\Omega$ is free. Therefore the flat Levi-Civita connection of the $d$-dimensional Euclidean space descends to an affine structure $\nabla_B$ on $B:=\Omega/H$, so that $(B,\nabla_B)$ is a compact hyperbolic affine manifold. The divisibility also implies that $\Omega$ is a regular convex cone by Proposition~\ref{Vey1970}. Therefore, for any $\gamma\in\pi_1(M)$, there exists $S_\gamma\in\operatorname{GL}_{n-d}(\mathbb{R})$ such that $\phi_\gamma(\vec{x})=S_\gamma\vec{x}$ for all $\vec{x}\in\Omega$.

Take any $\vec{x}\in\Omega$. Since the $\pi_1(M)$-action on $\Phi(\tilde{M})$ is free and proper discontinuous, the induce $K$-action on $\{\vec{x}\}\times\mathbb{R}^d$ is also free and proper discontinuous, and hence the flat Levi-Civita connection of the $d$-dimensional Euclidean space descends to a affine structure $D_{\vec{x}}$ on $N_{\vec{x}}$. Therefore, the canonical inclusion $i\colon N_{\vec{x}}\rightarrow M$ is an affine embedding into $(M,\nabla)$, and hence $((N_{\vec{x}},D_{\vec{x}}),i^*h)$ a Hessian submanifold of $((M,\nabla),h)$ by Lemma~\ref{Hessian_submfd}. Now, by Theorem~\ref{Auslander_conj} and Theorem~\ref{Shima_flat_thm}, we obtain that $(N_{\vec{x}},i^*h)$ is a flat Riemannian manifold. Therefore, by Bieberbach's theorem, there exists a Riemannian covering map from the $d$-dimensional flat torus to $(N_{\vec{x}},i^*h)$. Without loss of generality, we may assume that $N_{\vec{x}}=\mathbb{T}^d$, and hence $U_\gamma\in\operatorname{SL}_d(\mathbb{Z})$ for all $\gamma\in\pi_1(M)$. In particular, the Riemannian metric $\pi^*h|_{\{\vec{x}\}\times\mathbb{R}^d}$ is flat.

Therefore, by Theorem~\ref{global_potential}, there exists a convex function $\varphi\in C^\infty(\Omega\times\mathbb{R}^d)$ such that $$\pi^*h=\sum_{i=1}^{n-d}\sum_{j=1}^{n-d}\frac{\partial^2\varphi}{\partial x_i\partial x_j}dx_idx_j+2\sum_{i=1}^{n-d}\sum_{j=1}^d\frac{\partial^2\varphi}{\partial x_i\partial y_j}dx_idy_j+\sum_{i=1}^d\sum_{j=1}^d\frac{\partial^2\varphi}{\partial y_i\partial y_j}dy_idy_j.$$ 

Take any $\vec{x}\in\Omega$. Recall that by Theorem~\ref{Auslander_conj}, the affine manifold $(N_{\vec{x}},D_{\vec{x}})$ admits a nowhere vanishing parallel volume form. Therefore $D_{\vec{x}}$ is identical to the Levi-Civita connection of $(N_{\vec{x}},i^*h)$ by Theorem~\ref{Shima_flat_thm}, and consequently the Amari-Chentsov tensor $D_{\vec{x}}(i^*h)$ vanishes identically. This proves that $$\frac{\partial^3\varphi}{\partial y_i\partial y_j\partial y_k}=0$$ for all $i,j,k\in\{1,\dots,d\}$.

Therefore, we conclude that there exist a smooth function $u\in C^\infty(\Omega)$, a vector-valued function $\xi\in C^\infty(\Omega)^{\oplus d}$, and a matrix-valued function $F\in C^\infty(\Omega)^{d\times d}$ such that $$\varphi(\vec{x},\vec{y})=u(\vec{x})+\langle\xi(\vec{x}),\vec{y}\rangle+\langle F(\vec{x})\vec{y},\vec{y}\rangle$$ for all $(\vec{x},\vec{y})\in\Omega\times\mathbb{R}^d$.

Recall that an element $\gamma\in K$ acts on $\Omega\times\mathbb{R}^d$ by pure translation $\gamma(\vec{x},\vec{y})=(\vec{x},T_\gamma\vec{x}+\vec{y}+\vec{v}_\gamma)$. The $K$-invariance of the Hessian quadratic form of $\varphi$ then yields conservation laws
$$
\frac{\partial}{\partial x_i}\frac{\partial}{\partial x_j} (F(\vec{x})^{-1}\xi(\vec{x}))= \frac{\partial}{\partial x_k} \left( F(\vec{x}) (T_\gamma\vec{x} + \vec{v}_\gamma) \right) = 0
$$
for all $\gamma\in K$ and all $i,j,k\in\{1,\dots,n-d\}$, which implies that there exist real $d\times d$-matrices $A_0,\dots,A_{n-d}$ and an affine mapping $\phi\colon\mathbb{R}^{n-d}\rightarrow\mathbb{R}^d$ such that $F(\vec{x})=L(\vec{x})^{-1}$ and $\xi(\vec{x})=L(\vec{x})^{-1}\phi(\vec{x})$ where $$L(x_1,\dots,x_{n-d}):=A_0+\sum_{i=1}^{n-d}x_iA_i$$ and hence, up to an affine change of coordinates, the potential $\varphi$ is of the form $$\varphi(\vec{x},\vec{y}):=u(\vec{x})+\langle L(\vec{x})^{-1}\vec{y},\vec{y}\rangle\in C^\infty(\Omega)\otimes \mathbb{R}[y_1,\cdots,y_d].$$ In particular, for any $\vec{x}\in\Omega$, the matrix $L(\vec{x})$ is symmetric and positive definite.

Take any arbitrary $\gamma\in\pi_1(M)$. Since the Hessian quadratic form of $\varphi\circ\gamma-\varphi$ vanishes, we have that $\varphi\circ\gamma-\varphi$ is an affine function. Now, we regard $\varphi(\vec{x},\vec{y})\in (C^\infty(\Omega))[y_1,\cdots,y_d]$ as a quadratic polynomial. The quadratic term of $\varphi$ and $\varphi\circ\gamma$ then coincide, that is, $$L(S_\gamma\vec{x})=U_\gamma^\dagger L(\vec{x})U_\gamma$$ for all $\vec{x}\in \Omega$, where $U_\gamma^\dagger$ here is the transpose of $U_\gamma$. By considering the limit $\vec{x}\rightarrow0$, we have $$A_0=L(0)=U_\gamma^\dagger L(0)U_\gamma=U_\gamma^\dagger A_0U_\gamma$$ and $A_0=L(0)\in\operatorname{GL}(\mathbb{R}^d)$ is a positive definite symmetric matrix. Therefore, we conclude that $$U_\gamma\in\operatorname{SL}_d(\mathbb{Z})\cap\operatorname{O}(d).$$ Since $\operatorname{SL}_d(\mathbb{Z})$ is discrete and $\operatorname{O}(d)$ is compact, we have that $\operatorname{SL}_d(\mathbb{Z})\cap\operatorname{O}(d)$ is a finite group. By lifting to a finite-sheet covering space of $M$, we may assume that $U_\gamma$ is the identity matrix for all $\gamma\in\pi_1(M)$, and hence $L(\vec{x})=L(S_\gamma\vec{x})$ for all $\vec{x}\in \Omega$. 

Now, denote $\vec{v}:=(\operatorname{tr}(A_1),\dots,\operatorname{tr}(A_{n-d}))\in\mathbb{R}^{n-d}$. Then $$\operatorname{tr}(L(\vec{x}))=\langle\vec{x},\vec{v}\rangle+\operatorname{tr}(A_0),$$ and $L(\vec{x})=L(S_\gamma\vec{x})$ implies that $\langle\vec{x},\vec{v}\rangle=\langle S_\gamma\vec{x},\vec{v}\rangle$ for all $\vec{x}\in \Omega$. Therefore, $C^\infty(\Omega)^H$ is $H$-invariant. Since the $H$-action on $\Omega$ is cocompact, we have that the affine function $L(\vec{x})$ is bounded, and hence must be constant. Therefore, we obtain $\langle\vec{x},\vec{v}\rangle=0$ for all $\vec{x}\in\Omega$ and hence $\vec{v}=0$, i.e. $\operatorname{tr}(A_i)=0$ for all $i=1,\dots,n-d$.

Take any $\vec{x}=(x_1,\dots,x_{n-d})\in\Omega$. If $$A(x_1,\dots,x_{n-d}):=\sum_{i=1}^{n-d}x_iA_i$$ admits a negative eigenvalue, then for sufficient large $t>0$, we have $$L(\vec{x}\cdot t)/t=A(\vec{x})+\frac{1}{t}A_0$$ is not positive definite, contradiction. 

Therefore, for any $\vec{x}=(x_1,\dots,x_{n-d})\in\Omega$, we have $$A(\vec{x})=\sum_{i=1}^{n-d}x_iA_i\geq0.$$ 

But, since $\operatorname{tr}(A_1)=\cdots=\operatorname{tr}(A_{n-d})=0$, we have $\operatorname{tr}(A(\vec{x}))=0$ and hence $A(\vec{x})=0$ for all $\vec{x}\in\Omega$. Therefore, we conclude that $L(\vec{x})\equiv A_0$ is a constant matrix-valued function, and hence, up to an affine change of coordinates, we may assume that $$\varphi(\vec{x},\vec{y})=u(\vec{x})+\frac{1}{2}\|\vec{y}\|^2.$$

The equation $$\frac{\partial\, T_\gamma\vec{x}}{\partial x_i}=\frac{1}{2}\frac{\partial}{\partial x_i} \left( F(\vec{x}) (T_\gamma\vec{x} + \vec{v}_\gamma) \right) = 0,$$ where $i\in\{1,\dots,n-d\}$ and $\gamma\in\pi_1(M)$, now gives that $T_\gamma=0$ for all $\gamma\in\pi_1(M)$. In particular, the central extension $$1\longrightarrow K\longrightarrow\pi_1(M)\longrightarrow H\longrightarrow 1$$ splits as a direct product $\pi_1(M)=H\times K$. The theorem is therefore proved.
\end{proof}

\section{Classification in Low Dimensions}

\subsection{Complete Hessian Surfaces}

It is a well-known fact that an orientable closed Hessian surface is homeomorphic to the 2-torus $\mathbb{T}^2$. The ultimate objective of this section is to classify all complete Hessian surfaces topologically. To achieve this goal, the following preparations are needed:

\begin{definition}
For a convex domain $\Omega$ in $\mathbb{R}^d$, we denote by $$\operatorname{Aff}(\Omega):=\{f\in\operatorname{Aff}_d(\mathbb{R}):f(\Omega)=\Omega\}$$ the group of affine automorphisms of $\Omega$.
\end{definition}

\begin{lemma}\label{recession cone}
Let $\Omega$ be a convex domain in $\mathbb{R}^d$ and $\operatorname{recc}(\Omega)$ its recession cone. Let $f\in\operatorname{Aff}_2(\mathbb{R})$ and let $A$ be the linear part of $f$, that is, $f(\vec{x})=A\vec{x}+f(0)$ for all $\vec{x}\in\mathbb{R}^d$. Suppose that $\Omega=f(\Omega)$. Then $A\vec{x}\in\operatorname{recc}(\Omega)$ for all $\vec{x}\in\operatorname{recc}(\Omega)$.
\end{lemma}
\begin{proof}
Take any $\vec{x}\in\operatorname{recc}(\Omega)$ and $\vec{y}\in\Omega$. By the definition of a recession cone, it suffices to prove that $A\vec{x}+\vec{y}\in\Omega$. But indeed, we have $$A\vec{x}+\vec{y}=A\vec{x}+Af^{-1}(\vec{y})+f(0)=f(\vec{x}+f^{-1}(\vec{y}))\in\Omega$$ as required.
\end{proof}

The following proposition concerning compactness of the affine automorphism group of a convex body is itself interesting and useful.

\begin{proposition}\label{Löwner}
Let $\Omega$ be a bounded convex domain in $\mathbb{R}^n$. Then $\operatorname{Aff}(\Omega)$ is isomorphic to a compact Lie subgroup of $\operatorname{GL}_n(\mathbb{R})$.
\end{proposition}
\begin{proof}
It is readily seen that $G:=\operatorname{Aff}(\Omega)$ is a closed subgroup of $\operatorname{Aff}_n(\mathbb{R})$. By Cartan's theorem on closed subgroups, $G$ is a Lie subgroup of $\operatorname{Aff}_n(\mathbb{R})$.

Denote by $\overline{\Omega}$ the closure of $\Omega$. By Heine-Borel theorem and the assumption on $\Omega$, we have that $\overline{\Omega}$ is a convex body. Denote by $E_\Omega$ the inner Löwner–John ellipsoid of $\overline{\Omega}$. By the uniqueness of the inner Löwner–John ellipsoid, $E_\Omega=g(E_\Omega)$ for every $g\in G$. By linear algebra, if a non-degenerate affine transform $g$ maps an ellipsoid $E$ onto $E$ itself, then $g$ fix the center of $E$. It follows that all elements of $G$ share a common fixed point, namely the center of the ellipsoid $E_\Omega$. without loss of generality, we may assume that the ellipsoid $E_\Omega$ is centered at the origin $0\in\mathbb{R}^n$. Therefore $G$ is a subgroup of $\operatorname{GL}_n(\mathbb{R})$.

Denote by $\|\cdot\|_{\operatorname{op}}$ the operator norm. Denote by $\ell_1$ and $\ell_2$ the diameter and the width of $E_\Omega$, respectively. Then it is readily seen that $\|g\|_{\operatorname{op}}\leq\ell_1/\ell_2$ for all $g\in G$. In particular, $G$ is a bounded subset of $\mathbb{R}^{n\times n}$. Now, again by Heine-Borel theorem, $G$ is compact.
\end{proof}

\begin{lemma}\label{G_action_bounded_convex_domain}
Let $\Omega$ be a bounded convex domain in $\mathbb{R}^d$ and let a subgroup $G$ of $\operatorname{Aff}(\Omega)$ act on $\Omega$ by affine transformation. Then the quotient space $\Omega/G$ is never compact.
\end{lemma}
\begin{proof}
By Proposition~\ref{Löwner}, up to an affine change of coordinates, we may assume $G$ is a subgroup of the orthogonal group $\operatorname{O}(d)$, and that $0\in\mathbb{R}^d$ is the center of mass of $\Omega$. Denote by $\partial\Omega$ the boundary of $\Omega$ in $\mathbb{R}^d$. Then the $G$-action on $\Omega$ extends uniquely to $\partial\Omega$. Notice that $$\frac{\partial\Omega\times[0,1)}{\partial\Omega\times\{0\}}\cong\{t\vec{x}\in\mathbb{R}^d:\vec{x}\in\partial\Omega,0\leq t<1\}=\Omega.$$ Then we conclude that $$\Omega/G\cong\frac{(\partial\Omega/G)\times[0,1)}{(\partial\Omega/G)\times\{0\}}$$ is non-compact, as it admits a continuous surjective mapping onto the interval $[0,1)$.
\end{proof}

We will also use the following classical theorem on crystallographic groups of D. Fried and W. Goldman.

\begin{theorem}[Fried-Goldman,1983\cite{32}]\label{crystallographic}
Let $G$ be a subgroup of $\operatorname{Aff}(\mathbb{R}^2)$. Suppose that $G$ acts properly discontinuously on $\mathbb{R}^2$ by affine transform. Then $G$ admits a solvable subgroup of finite index.
\end{theorem}
\begin{proof}
This is a main theorem proved in \cite{32}.
\end{proof}

Now we are fully prepared to prove the main theorem of this section.

\begin{theorem}\label{surface_classification}
Let $((M,\nabla),h)$ be a complete Hessian surface. Then $M$ is diffeomorphic to a complete flat Riemannian manifold of dimension two, that is, one of the following smooth manifolds:
\begin{enumerate}
    \item[1.]the plane $\mathbb{R}^2$;
    \item[2.]the cylinder $\mathbb{S}^1\times\mathbb{R}$;
    \item[3.]the Möbius strip;
    \item[4.]the 2-torus $\mathbb{T}^2$;
    \item[5.]the Klein bottle.
\end{enumerate}
\end{theorem}
\begin{proof}
Since $((M,\nabla),h)$ is a complete Hessian manifold of dimension 2, by Theorem~\ref{Shima_convex_thm}, there exists a planar convex domain $\Omega$ and a discrete subgroup $G$ of the affine group $\operatorname{Aff}_2(\mathbb{R})$, such that $G$ acts freely and properly discontinuously on $\Omega$ by affine transform so that $M\cong\Omega/G$. 

If $G$ is trivial, then $M=\Omega$ is homeomorphic to $\mathbb{R}^2$. If $M$ is closed, then by Proposition~\ref{Euler}, we have that $\chi(M)=0$ and hence $M$ is homeomorphic to the 2-torus or the Klein bottle. Recall from low-dimensional topology that the fundamental group of a non-compact surface is always free. From now on, we assume that $M$ is non-compact and $G$ is a non-trivial free group.

Denote by $\operatorname{recc}(\Omega)$ the recession cone of $\Omega$. By elementary Euclidean geometry, any convex cone in $\mathbb{R}^2$, and hence in particular $\operatorname{recc}(\Omega)$, is congruent to one of the following sets:
\begin{enumerate}
    \item[1.]a point in $\mathbb{R}^2$;
    \item[2.]a straight line, i.e. an 1-dimensional affine subspace of $\mathbb{R}^2$;
    \item[3.]a wedge, i.e. the set $W_\theta:=\{z\in \mathbb{C}^1:0\leq\operatorname{Arg}(z)\leq\theta\}$ for $0\leq\theta\leq\pi$;
    \item[4.]the whole plane $\mathbb{R}^2$.
\end{enumerate}
Recall from convex geometry that a convex domain is bounded if and only if its recession cone is a point. If $\operatorname{recc}(\Omega)$ is indeed a point, then $\Omega$ is bounded. By Proposition~\ref{Löwner}, the automorphism group $\operatorname{Aff}(\Omega)$ is a compact Lie group, implying it has a fixed point. Since $G \subset \operatorname{Aff}(\Omega)$ acts freely on $\Omega$, it cannot contain any non-trivial elements fixing a point. Thus, $G$ must be the trivial group, which contradicts the hypothesis that $G$ is an infinite free group. Therefore $\operatorname{recc}(\Omega)$ cannot be a point.

Assume that $\operatorname{recc}(\Omega)$ is the whole plane $\mathbb{R}^2$. Then we have $\Omega=\operatorname{recc}(\Omega)=\mathbb{R}^2$. Therefore $G$ admits a solvable subgroup $G_0$ of finite index by Theorem~\ref{crystallographic}.

Assume that $\operatorname{recc}(\Omega)$ is a straight line in $\mathbb{R}^2$ or a wedge $W_\theta$ with $0\leq\theta\leq\pi$. Consider the group homomorphism $$\phi\colon\operatorname{Aff}_2(\mathbb{R})\rightarrow\operatorname{GL}_2(\mathbb{R}), \quad f\mapsto(\vec{x}\mapsto f(\vec{x})-f(0))$$ that is, the map sending every affine transform to its linear part. Then by Lemma~\ref{recession cone}, we have that $\phi(G)$ acts on $\operatorname{recc}(\Omega)$ by linear automorphism. In particular, $\phi(G)$ preserves the apex and the boundary of $\operatorname{recc}(\Omega)$. Therefore, $\phi(G)$ admits a subgroup $H$ of index at most 2, such that all elements of $H$ share at least one eigenvector, and hence $G_0:=\phi^{-1}(H)\cap G$ is isomorphic to a subgroup of the group $B$ of invertible upper-triangular real $3\times3$ matrices. Recall from linear algebra that the length of the derived sequence of Heisenberg group $H_3(\mathbb{R})$ is exactly $2$, and observe that $B/H_3(\mathbb{R})\cong(\mathbb{R}^\times)^3$ is Abelian. By group theory, we obtain that $B$ is a solvable group, and hence $G_0$ ditto. Therefore, $G$ admits a solvable subgroup $G_0$ of index at most 2.

This proves that $G$ always admits a solvable subgroup $G_0$ of finite index.

Since $G$ is infinite, we have $G_0$ ditto. By Nielsen–Schreier theorem, $G_0$ is also free. Since any free group of rank $r>1$ is not solvable, the rank of $G_0$ is at most 1. Therefore, we conclude that $G_0\cong\mathbb{Z}$. Finally, by Schreier's index formula, we arrive at that in fact the rank of $G$ is also 1, i.e. $G$ is isomorphic to $\mathbb{Z}$.

Therefore, the non-compact surface $M$ has fundamental group $\pi_1(M)=\mathbb{Z}$. 

By the classification theorem of non-compact surfaces, we have that $M$ is a closed surface $N$ with $k\geq1$ punctures. 

Assume that $N$ is orientable of genus $g$. Then by Seifert-van Kampen theorem, $\pi_1(M)$ is a free group of rank $r=2g+k-1$. Since $r=1$ and $k>0$, we have that $g=0$ and $k=2$. Therefore $M$ is a sphere with $2$ punctures, i.e. a cylinder. 

Assume that $N$ is non-orientable. Then, by the classification theorem of closed surfaces, $N$ is the connected sum of $l$ real projective planes for some integer $l\geq1$. Again by Seifert-van Kampen theorem, $\pi_1(M)$ is a free group of rank $r=l+k-1$. Since $r=1$ and $k>0$, we obtain $l=k=1$. Therefore $M$ is a punctured $\mathbb{R}P^2$, i.e. a Möbius strip. 

Quod erat demonstrandum.
\end{proof}

\subsection{Closed Orientable Hessian 3-Manifolds}

The flat tori, or more generally, Bieberbach manifolds are examples of compact Hessian manifolds, but they are trivial in the sense that their affine structures are given by their Levi-Civita connections. In this section, we construct concretely a family of compact radiant Hessian manifolds of Koszul type with negative scalar curvature.

Denote by $\mathbb{H}^2:=\{z\in\mathbb{C}:\operatorname{Im}(z)>0\}$ the upper half plane in $\mathbb{C}^1$. Consider the product Riemannian metric $$ds^2:=d\rho^2+\frac{|d\tau|^2}{\operatorname{Im}(\tau)^2}$$ on $\mathbb{H}^2\times\mathbb{R}$, where $\tau$ is the coordinate on $\mathbb{H}^2$ and $\rho$ is the coordinate on $\mathbb{R}$. 

We claim that there exists a flat and torsion-free connection $\tilde{\nabla}$ on the tangent bundle of $\mathbb{H}^2\times\mathbb{R}$, such that $ds^2$ is a Hessian metric on $(\mathbb{H}^2\times\mathbb{R},\tilde{\nabla})$, and moreover, for every cocompact Fuchsian group $\Gamma$ and every periodic mapping class $[\varphi]\in\operatorname{Mod}(\mathbb{H}^2/\Gamma)$, there exist a free and properly discontinuous $(\Gamma\rtimes_\varphi\mathbb{Z})$-action on $\mathbb{H}^2\times\mathbb{R}$ such that both $\tilde{\nabla}$ and $ds^2$ descend to the quotient, making it a compact radiant Hessian manifold of Koszul type with constant negative scalar curvature $-1$.

Consider the quadratic function $$Q\colon\mathbb{R}^3\rightarrow\mathbb{R}, \quad (x,y,t)\mapsto t^2-x^2-y^2$$ and denote $\Omega:=\{(x,y,t)\in\mathbb{R}^3:Q(x,y,t)>0\}$. Also, define a smooth function $f\in C^\infty(\Omega)$ by $$f(x,y,t):=-\frac{1}{2}\operatorname{log}(t^2-x^2-y^2).$$ Then computation yields that the Hessian quadratic form $$2\frac{(tdt-xdx-ydy)^2}{(t^2-x^2-y^2)^2}-\frac{dt^2-dx^2-dy^2}{t^2-x^2-y^2}$$ of $f$ is a Riemannian metric on $\Omega$. 

Denote by $\nabla$ the Levi-Civita connection of the Euclidean 3-space. Then, by construction, $\nabla(df)$ is a Hessian metric on $(\Omega,\nabla)$.

A tedious but straightforward computation shows that $$\Phi\colon\mathbb{H}^2\times\mathbb{R}\rightarrow \Omega, \quad (\tau,\rho)\mapsto\frac{\operatorname{exp}(\rho)}{2\operatorname{Im}(\tau)}(\tau+\bar{\tau},|\tau|^2-1,|\tau|^2+1)$$ is an isometry between $(\mathbb{H}^2\times\mathbb{R},ds^2)$ and $(\Omega,\nabla(df))$. 

By the additivity of scalar curvature, we obtain that the scalar curvature of $(\Omega,\nabla(df))$ is constantly $-1+0=-1$.

Denote by $\tilde{\nabla}:=\Phi^*\nabla$ the pullback connection. Then $((\mathbb{H}^2\times\mathbb{R},\tilde{\nabla}),ds^2)$ is a Hessian manifold as $((\Omega,\nabla),\nabla(df))$ is.

From now on, for simplicity, we identify $((\Omega,\nabla),\nabla(df))$ with $((\mathbb{H}^2\times\mathbb{R},\tilde{\nabla}),ds^2)$.

Consider any arbitrary cocompact Fuchsian group action $$(\Gamma,\mathbb{H}^2)\rightarrow\mathbb{H}^2, \quad (\gamma,\tau)\mapsto\gamma\cdot \tau$$ on $\mathbb{H}^2$ so that $\mathbb{H}^2/\Gamma=\Sigma_g$ is a closed orientable surface of genus $g>1$. Let $[\varphi]\in\operatorname{Mod}(\Sigma_g)$ be a periodic mapping class. By Nielsen's realization theorem, without loss of generality, we may assume that $\varphi$ lifts to an $\Gamma$-equivariant isometry $\tilde{\varphi}$ of the Lobachevsky plane. Then $G:=\Gamma\rtimes_\varphi\mathbb{Z}$ acts on $\mathbb{H}^2\times\mathbb{R}$ by isometries via $$G\times(\mathbb{H}^2\times\mathbb{R})\rightarrow\mathbb{H}^2\times\mathbb{R}, \quad ((\gamma,n),(\tau,\rho))\mapsto(\gamma\cdot \varphi^n(\tau),\rho+n).$$ The $G$-action is by construction free and properly discontinuous.

Recall from the theory of Lie groups that the isometry group $\operatorname{PSL}_2(\mathbb{R})$ of $\mathbb{H}^2$ is isomorphic to the Lorentz group $\operatorname{SO}^+(2,1)$, whilst the latter is a Lie subgroup of $\operatorname{GL}_3(\mathbb{R})$. It follows that $G$ acts on the convex cone $\Omega$ by linear automorphisms. Therefore, $\tilde{\nabla}$ descends to a flat and torsion-free connection $D$ on the tangent bundle of the mapping torus $M_\varphi=(\mathbb{H}^2\times\mathbb{R})/G$ of $\varphi$, so that $(M_\varphi,D)$ is an affine manifold. Moreover, $(M_\varphi,D)$ is radiant by Proposition~\ref{radiant_cone_1}.

Since it always holds that $$(|z|^2+1)^2-(|z|^2-1)^2-(z+\bar{z})^2=4\operatorname{Im}(z)^2$$ for every $z\in\mathbb{C}$, we obtain that $Q(\Phi(\gamma\cdot\varphi^n(\tau),\rho+n))=Q(\Phi(\tau,\rho))e^{2n}$, and hence $$f(\Phi(\gamma\cdot\varphi^n(\tau),\rho+n))=-\frac{1}{2}\operatorname{log}(Q(\Phi(\tau,\rho))\operatorname{exp}(2n))=f(\Phi(\tau,\rho))-n$$ for every $(\gamma,n)\in G$ and every $(\tau,\rho)\in\mathbb{H}^2\times\mathbb{R}$. Therefore the 1-form potential $df$ of $((\Omega,\nabla),\nabla df)$ is invariant under the $G$-action, and thus descends to the quotient $M_\varphi$. In particular, the product Riemannian metric $ds^2$ on $\mathbb{H}^2\times\mathbb{R}$ descends to a Riemannian metric $h$ on $M_\varphi$.

This proves that $((M_\varphi,D),h)$ is a compact radiant Hessian manifold of Koszul type with constant negative scalar curvature $-1$, as claimed.

We thus arrived at the following theorem:

\begin{proposition}\label{example_g>1}
Let $\Sigma_g$ be a closed orientable surface of genus $g>1$, and let $[\varphi]\in\operatorname{Mod}(\Sigma_g)$ be a periodic mapping class. Then there exist a Riemannian metric $h$ on the mapping torus $M_\varphi$ of $\varphi$ and a flat torsion-free connection $D$ on the tangent bundle $T(M_\varphi)\rightarrow M_\varphi$, such that $((M_\varphi,D),h)$ is a closed and orientable Hessian $3$-manifold of Koszul type.
\end{proposition}

Under the scope of Thurston's geometrization, the high genus examples in the proof of Proposition~\ref{example_g>1} are those locally modeled by $\mathbb{H}^2\times\mathbb{R}$. A natural next step is to construct similar examples for the genus one case. We will now prove that these examples indeed exist, and they all admit flat models.

\begin{proposition}\label{example_g=1}
Let $[\varphi]\in\operatorname{Mod}(\mathbb{T}^2)$ be a periodic mapping class. Then there exist a Riemannian metric $g$ on the mapping torus $M_\varphi$ of $\varphi$, such that $(M_\varphi,g)$ is a closed orientable Bieberbach $3$-manifold.
\end{proposition}
\begin{proof}
Recall from the arithmetic of $\operatorname{SL}_2(\mathbb{Z})\cong\operatorname{Mod}(\mathbb{T}^2)$ that every finite order element of $\operatorname{SL}_2(\mathbb{Z})$ is conjugate to either $\pm\operatorname{Id}$ or one of the following matrices
$$
A_1=\left[\begin{array}{cc}
0 & -1  \\
1 & -1  
\end{array}\right], \quad A_2=\left[\begin{array}{cc}
0 & -1  \\
1 & 0  
\end{array}\right], \quad A_3=\left[\begin{array}{cc}
0 & -1  \\
1 & 1 
\end{array}\right]
$$
or their inverses. Now, it suffices to notice that $\pm\operatorname{Id},A_2$ are isometries of the rhombic elliptic curve $\mathbb{C}^1/(\mathbb{Z}\oplus i\mathbb{Z})$, and $A_1,A_3$ are isometries of the hexagonal elliptic curve $\mathbb{C}^1/(\mathbb{Z}\oplus \omega\mathbb{Z})$, where $\omega\in\mathbb{C}$ is the $3$-rd primitive root of the unity.
\end{proof}

The examples constructed above are consistent with the results established earlier. As predicted by Theorem~\ref{Tischler}, for a closed orientable surface $\Sigma_g$ of genus $g\geq1$, the mapping torus $M_\varphi$ of a periodic mapping class $[\varphi]\in\operatorname{Mod}(\Sigma_g)$ is indeed a fiber bundle over $\mathbb{S}^1$. The fundamental group of $M_\varphi$ is a semidirect product of $\mathbb{Z}$ and $\pi_1(\Sigma_g)$, which is indeed infinite and torsion free, as stated in Corollary~\ref{infty_pi_1} and Proposition~\ref{torsion_free}. 

However, given the nature of all examples that we constructed, one can be curious about whether there exists an example of closed orientable Hessian $3$-manifold that is a mapping torus of a mapping class of infinite order. We now prove that this is impossible, which leads to a classification theorem of closed Hessian $3$-manifolds.

\begin{theorem}\label{3-dim_classification}
Let $((M,\nabla),h)$ be a closed orientable Hessian $3$-manifold. Then exactly one of the following holds:
\begin{enumerate}
    \item [1.] There exists a finite covering map $\mathbb{T}^3\rightarrow M$;
    \item [2.] There exists a periodic mapping class $[\varphi]\in\operatorname{Mod}(\Sigma_g)$ such that $M$ is diffeomorphic to the mapping torus of $\varphi$, where $\Sigma_g$ is a closed orientable surface of genus $g\geq2$.
\end{enumerate}
\end{theorem}
\begin{proof}
By Theorem~\ref{convex_splitting}, without loss of generality, we may assume that $(M,\nabla)$ is the product of a compact hyperbolic affine manifold with a flat torus of dimension $d\in\{0,\dots,3\}$.

If $d\geq 2$, then the desired result holds trivially.

If $d=1$, then $M$ is homeomorphic to the $3$-torus $\mathbb{T}^3$ by Theorem~\ref{surface_classification}. Thus, for $d \geq 1$, $M$ falls into the first case and is finitely covered by $\mathbb{T}^3$.

Now, assume that $d=0$. Then $(M,\nabla)$ itself is a compact orientable hyperbolic affine manifold. The desired result, in this case, follows directly from Corollary~\ref{hyperbolic_affine_fibering}.
\end{proof}

Recall that a Dehn twist on a closed orientable surface $\Sigma_g$ of genus $g>0$ is of infinite order. Therefore, in particular, Theorem~\ref{3-dim_classification} implies that the nilpotent Heisenberg manifold 
$$
M:=H_3(\mathbb{R})/H_3(\mathbb{Z})
$$
cannot support any Hessian metric.

\begin{remark}
Combine Proposition~\ref{example_g>1} and Proposition~\ref{example_g=1}, we obtain that the converse statement of Theorem~\ref{3-dim_classification} is also true: For every closed orientable surface $\Sigma_g$ of genus $g\geq1$ and every periodic mapping class $[\varphi]\in\operatorname{Mod}(\Sigma_g)$, there exist a Riemannian metric $h$ on the mapping torus $M_\varphi$ of $\varphi$ and a flat torsion-free connection $D$ on the tangent bundle $T(M_\varphi)\rightarrow M_\varphi$, such that $((M_\varphi,D),h)$ is a Hessian manifold. These thus gives a complete topological classification of closed orientable Hessian $3$-manifolds.
\end{remark}

\subsection{Partial Results in Dimension Four}

For an oriented closed 4-manifold $M$, as per usual, we denote by $Q_M$ its intersection form.

\begin{lemma}\label{vanishing_dim4}
Let $((M,\nabla),h)$ be an oriented closed Hessian 4-manifold. Then $\chi(M)=\operatorname{sign}(Q_M)=0$. In particular, the first Betti number of $M$ is positive and the second Betti number of $M$ is even.
\end{lemma}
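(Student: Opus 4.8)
The plan is to deduce all four assertions from two structural inputs already established in the excerpt—the vanishing of the Euler characteristic of a compact Hessian manifold, and the vanishing of the Pontryagin classes of an affine manifold—after which the Betti-number claims reduce to Poincar\'e-duality bookkeeping.

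First I would invoke Proposition \ref{Euler} directly: a Hessian manifold is in particular a compact affine manifold, so $\chi(M) = 0$. For the signature I would appeal to the Hirzebruch signature theorem, which for a closed oriented $4$-manifold reads $\operatorname{sign}(Q_M) = \tfrac{1}{3}\langle p_1(TM), [M]\rangle$. Since $M$ underlies the affine manifold $(M,\nabla)$, Proposition \ref{Pontryagin} gives $p_1(TM) = 0$, and therefore $\operatorname{sign}(Q_M) = 0$.

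For the two ``in particular'' statements I would combine these vanishing results with the standard decomposition of $\chi$ for a connected closed oriented $4$-manifold. Using $b_0 = b_4 = 1$ together with Poincar\'e duality $b_1 = b_3$, one has $\chi(M) = 2 - 2b_1 + b_2$, so $\chi(M) = 0$ forces $b_2 = 2b_1 - 2$. On the other hand, writing $b_2 = b_2^+ + b_2^-$ for the ranks of the positive- and negative-definite parts of the intersection pairing on $H^2(M;\mathbb{R})$, the signature equals $b_2^+ - b_2^-$; its vanishing gives $b_2^+ = b_2^-$, so $b_2$ is even. Finally, from $b_2 = 2b_1 - 2 \geq 0$ we obtain $b_1 \geq 1$, so the first Betti number is positive.

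I do not anticipate a genuine obstacle: the only non-elementary ingredients are the Hirzebruch signature theorem and the Chern--Weil vanishing recorded in Proposition \ref{Pontryagin}, both of which are cited rather than reproved, and the rest is linear algebra. The one point requiring mild care is to keep conventions consistent—working with the intersection pairing on the free part of $H^2(M;\mathbb{Z})$, equivalently on $H^2(M;\mathbb{R})$, so that its signature is genuinely $b_2^+ - b_2^-$ and its rank is genuinely $b_2$—and to assume $M$ connected so that the formula $\chi(M) = 2 - 2b_1 + b_2$ applies verbatim.
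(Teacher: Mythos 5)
Your proposal is correct and follows essentially the same route as the paper: Proposition \ref{Euler} for $\chi(M)=0$, Proposition \ref{Pontryagin} together with the Hirzebruch signature theorem for $\operatorname{sign}(Q_M)=0$, and Poincar\'e duality bookkeeping for the Betti numbers. The only cosmetic difference is that you derive the evenness of $b_2$ from $b_2^+=b_2^-$, whereas it already falls out of the identity $b_2 = 2b_1-2$; both are valid.
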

\begin{proof}
The first claim follows from Proposition~\ref{Pontryagin} \&~\ref{Euler} and 
Hirzebruch's signature theorem. Moreover, since $\operatorname{dim}M=4$ is even and $\chi(M)=0$, by Poincar\'e duality the first Betti number of $M$ is positive and the second Betti number of $M$ is even.
\end{proof}

Recall that all closed orientable 4-manifolds have $\operatorname{spin}^c$ structures. The following property is specific for Hessian 4-manifolds.

\begin{proposition}\label{almost_complex}
Let $((M,\nabla),h)$ be a closed orientable Hessian 4-manifold. If $(M,h)$ is spin then $M$ admits an almost complex structure with vanishing first Chern class. Moreover, the second Chern classes of all almost complex structures on $M$ are zero.
\end{proposition}
\begin{proof}
Recall that $(M,h)$ is spin if the second Stiefel–Whitney class $w_2(TM)\in \mathrm{H}^2(M;\mathbb{Z}/2\mathbb{Z})$ of $M$ vanishes.

Moreover, by Ehresmann-Wu theorem, $M$ admits an almost complex structure $J$ with first Chern class $c \in \mathrm{H}^2(M ; \mathbb{Z})$ if and only if the following equation 
\begin{equation}\label{Wu_eq}
3\operatorname{sign}\left(Q_M\right)+2 \chi(M)=c^2   
\end{equation}
is satisfied and $w_2(TM)$ is the $\bmod 2$ reduction of $c$. For our case, by Lemma~\ref{vanishing_dim4}, $3\operatorname{sign}\left(Q_M\right)+2 \chi(M)=0+0=0$. If $w_2(TM)=0$, then $c=0$ satisfies equation~(\ref{Wu_eq}) and its $\bmod 2$ reduction is trivially equal to $w_2(TM)$.

Now, equip the tangent bundle $TM\rightarrow M$ with an almost complex structure $J$. Then, by the theory of characteristic classes, the first Pontryagin class can be expressed as $$p_1(TM)=c_1(TM)^2-2c_2(TM)$$ in terms of Chern classes. Since $p_1(TM)=0$ by Proposition~\ref{Pontryagin}, and $c_1(TM)^2=0+0=0$ again by Ehresmann-Wu theorem, we conclude that $c_2(TM)=0$.
\end{proof}

Furthermore, the following result shows that there are only two possibilities for the intersection form of an oriented closed Hessian 4-manifold, depending on the existence of $\operatorname{spin}$ structure.

\begin{proposition}\label{H_and_diag}
Let $((M,\nabla),h)$ be an oriented closed Hessian 4-manifold with the second Betti number $b_2(M)=2m$. If $(M,h)$ is spin, then it holds that $Q_M=H^{\oplus m}$ where $$H:=Q_{\mathbb{S}^2\times\mathbb{S}^2}=\left[\begin{array}{cc}
0 & 1 \\
1 & 0
\end{array}\right];$$
if $(M,h)$ is not spin, then 
$Q_M=\operatorname{diag}(\underbrace{1,\dots,1}_m,\underbrace{-1,\dots,-1}_m)$.
\end{proposition}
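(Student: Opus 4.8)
The plan is to reduce the statement to the classification of indefinite unimodular symmetric bilinear forms over $\mathbb{Z}$, feeding in the constraints on $Q_M$ already recorded in Lemma\ref{vanishing_dim4}. Since $M$ is a closed oriented $4$-manifold, Poincar\'e duality makes the intersection form $Q_M$ on $\mathrm{H}^2(M;\mathbb{Z})/\mathrm{tors}$ a unimodular symmetric bilinear form of rank $b_2(M)$. By Lemma\ref{vanishing_dim4} its signature vanishes and $b_2(M)$ is even, say $b_2(M)=2m$; equivalently the positive and negative indices of inertia both equal $m$. If $m=0$ the form has rank zero and both displayed normal forms degenerate to the empty form, so the claim holds; I would therefore assume $m>0$, in which case $Q_M$ is indefinite.

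Next I would invoke the Hasse--Minkowski / Milnor--Husemoller classification: an indefinite unimodular form over $\mathbb{Z}$ is determined up to isomorphism by its rank, its signature, and its type (even or odd). Because $\operatorname{sign}(Q_M)=0$, the only even form of rank $2m$ is $H^{\oplus m}$, the possible $E_8$ summands being ruled out by the vanishing of the signature; and the only odd form of rank $2m$ and signature $0$ is $\operatorname{diag}(1,\dots,1,-1,\dots,-1)$ with $m$ entries of each sign. Thus the entire proposition reduces to determining the parity of $Q_M$ and matching it to the spin condition on $M$.

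For the parity I would use Wu's formula $w_2(TM)\cup x=x\cup x$, valid for every $x\in\mathrm{H}^2(M;\mathbb{Z}/2\mathbb{Z})$. If $M$ is spin then $w_2(TM)=0$, so $x\cup x=0$ for all such $x$, in particular for every $x$ in the image of the reduction map $\mathrm{H}^2(M;\mathbb{Z})\rightarrow\mathrm{H}^2(M;\mathbb{Z}/2\mathbb{Z})$; hence $Q_M(\tilde{x},\tilde{x})$ is even for all integral classes $\tilde{x}$, so $Q_M$ is even and equals $H^{\oplus m}$. This direction is clean. The decisive and harder step is the converse: showing that a \emph{non-spin} Hessian $4$-manifold has an \emph{odd} form. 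The naive equivalence ``$M$ spin $\Leftrightarrow Q_M$ even'' is false in general once $\mathrm{H}^2(M;\mathbb{Z})$ carries $2$-torsion --- for instance $E\#\bar{E}$, the connected sum of an Enriques surface with its orientation reversal, is non-spin, has even intersection form, and has signature $0$ --- so the signature hypothesis alone does not suffice. What I would exploit is that Hessian $4$-manifolds are aspherical with infinite torsion-free fundamental group (Corollary\ref{aspherical}, Proposition\ref{torsion_free}), which excludes such connected-sum phenomena; concretely, by Wu's formula $Q_M$ is odd precisely when $w_2(TM)$ pairs nontrivially with the image of integral reduction, and by the universal coefficient theorem this pairing can be trivial for nonzero $w_2(TM)$ only through a $2$-torsion contribution. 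Ruling out that torsion contribution --- i.e.\ proving that for these manifolds $w_2(TM)$ genuinely detects the parity of $Q_M$ --- is the main obstacle I anticipate, and the place where the special structure of Hessian manifolds must enter.
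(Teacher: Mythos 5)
Your route is the same as the paper's: Wu's formula to read off the parity of $Q_M$ from $w_2(TM)$, then the Hasse--Minkowski classification of indefinite unimodular forms together with $\operatorname{sign}(Q_M)=0$ from Lemma\ref{vanishing_dim4}. The reduction to parity, the handling of the even case ($E_8$ summands excluded by vanishing signature), the $m=0$ caveat, and the direction ``spin $\Rightarrow$ even'' are all correct and complete.

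The gap is the one you name yourself: you do not prove that a non-spin Hessian $4$-manifold has odd intersection form, and so the proposal is not a finished proof. Your diagnosis of why this is delicate is accurate. By Wu's formula and $\mathbb{Z}/2$-Poincar\'e duality, $w_2\neq 0$ only guarantees that $w_2$ pairs nontrivially with \emph{some} class of $\mathrm{H}^2(M;\mathbb{Z}/2\mathbb{Z})$, whereas oddness of $Q_M$ requires it to pair nontrivially with a class in the image of the reduction $\mathrm{H}^2(M;\mathbb{Z})\rightarrow\mathrm{H}^2(M;\mathbb{Z}/2\mathbb{Z})$; these differ exactly when $\mathrm{H}_1(M;\mathbb{Z})$ has $2$-torsion, and $2$-torsion in $\mathrm{H}_1$ is not excluded by the torsion-freeness of $\pi_1$ (e.g.\ the product of the Hantzsche--Wendt manifold with a circle is a compact Hessian $4$-manifold with $2$-torsion in $\mathrm{H}_1$). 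You should be aware, however, that the paper's own proof does not close this either: it simply asserts ``it is then readily seen that $Q_M$ is even if and only if $w_2=0$,'' which is precisely the implication you flag as non-obvious. So relative to the paper you have lost nothing in the forward direction and have correctly isolated the unjustified step in the converse; but as a standalone proof your proposal is incomplete at exactly that point, and to finish it you would need either to show that $w_2(TM)$ of a Hessian $4$-manifold always lies in the image of the integral reduction in the appropriate dual sense, or to supply some other argument special to these manifolds.
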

\begin{proof}
Recall that $(M,h)$ is spin if the second Stiefel–Whitney class $w_2:=w_2(TM)$ of $M$ vanishes. By Wu's formula, we have $Sq^2(a)=a\smile a=a\smile w_2$ for all $a\in\mathrm{H}^2(M;\mathbb{Z}/2\mathbb{Z})$. It is then readily seen that $Q_M$ is even if and only if $w_2=0$. The desired result then follows from Hasse-Minkowski classification theorem of indefinite unimodular symmetric bilinear form over $\mathbb{Z}$, and the fact that $\operatorname{sign}(Q_M)=0$.
\end{proof}

In fact, a closed orientable Hessian 4-manifold is either spin or Riemannian flat:

\begin{proposition}
Let $((M,\nabla),h)$ be a closed orientable Hessian 4-manifold. Suppose that the Riemannian manifold $(M,h)$ is not flat. Then $(M,h)$ is spin.
\end{proposition}
\begin{proof}
By Proposition~\ref{mapping_tori_2}, there exist a fiber bundle $f\colon M\rightarrow\mathbb{S}^1$ with connected orientable fiber $N$. Denote by $dt\in\Omega^1(\mathbb{S}^1)$ the volume form of the unit circle, and denote by $[\phi]\in\operatorname{Mod}(N)$ the monodromy of $f\colon M\rightarrow\mathbb{S}^1$. Then, by Wang sequence, the second cohomology group $\mathrm{H}^2(M;\mathbb{Z})$ decomposes as $$\mathrm{H}^2(M;\mathbb{Z})=\mathrm{H}^2(N;\mathbb{Z})^\phi\oplus A$$ where $$A:=\{[\alpha]\smile f^*[dt]:[\alpha]\in\mathrm{H}^1(N;\mathbb{Z})\}\cong\mathrm{H}^1(N;\mathbb{Z})/\operatorname{Im}(\phi^*-\operatorname{Id})$$ and $\mathrm{H}^2(N;\mathbb{Z})^\phi$ is the submodule of $\mathrm{H}^2(N;\mathbb{Z})$ consists of $\phi$-invariant classes. It is readily seen that for any $[\beta]\in\mathrm{H}^2(N;\mathbb{Z})^\phi$ or $[\beta]\in A$, we have $[\beta]\smile[\beta]=0$. Also, by equivariant Poincaré duality, the intersection form $Q_M$ restricts to a non-degenerated pairing between $\mathrm{H}^2(N;\mathbb{Z})^\phi$ and $A$. Therefore, in particular, we have $Q_M=H^{\oplus m}$ where $$H:=Q_{\mathbb{S}^2\times\mathbb{S}^2}=\left[\begin{array}{cc}
0 & 1 \\
1 & 0
\end{array}\right]$$ and hence $(M,h)$ is spin by Proposition~\ref{H_and_diag}.
\end{proof}

\section{More on Hessian 3-Manifolds}

Having established the topological classification of closed orientable Hessian 3-manifolds in Section 5, we now turn our attention to their more refined geometric and topological invariants. Specifically, we investigate the structure of their fundamental groups to bound their first Betti numbers, and we explore how these topological constraints influence macroscopic geometric quantities.

\subsection{Computing the Betti Numbers}

The following lemma is standard; however, to fix notations and familiarize the reader with the techniques used later, we provide a quick proof.

\begin{lemma}\label{positive_octant}
Let $\Omega:=\{(x_1,\dots,x_n)\in\mathbb{R}^n:x_1,\dots,x_n>0\}$ be the positive octant in $\mathbb{R}^n$.
Let $G$ be a discrete subgroup of $\operatorname{Aut}(\Omega)$ acting freely and cocompactly on $\Omega$.
Then, there exists a finite covering map $\mathbb{T}^n\rightarrow \Omega/G$.
\end{lemma}
\begin{proof}
Recall from convex geometry the semi-direct product decomposition $$\operatorname{Aut}(\Omega)=\{\operatorname{diag}(x_1,\dots,x_n):x_1,\dots,x_n>0\}\rtimes \mathfrak{S}_n,$$ where $\mathfrak{S}_n$ is the symmetric group permuting the coordinate axes.
Consider the diffeomorphism $\varphi \colon \Omega \rightarrow \mathbb{R}^n$ defined by
$$
\varphi(x_1, \dots, x_n) := (\log(x_1), \dots, \log(x_n)).
$$
It is readily seen that the conjugation ${H} := \varphi G\varphi^{-1}$ is a subgroup of the Euclidean group $\operatorname{E}(n)$.
Because $G$ acts freely and cocompactly on $\Omega$, the group ${H}$ acts freely and cocompactly on $\mathbb{R}^n$ by isometries.
By Bieberbach's theorem on crystallographic groups, the pure translation subgroup $\Lambda := {H} \cap \mathbb{R}^n$ is a normal subgroup of finite index in ${H}$, and $\Lambda$ is isomorphic to $\mathbb{Z}^n$.
The quotient space $\mathbb{R}^n/\Lambda$ is homeomorphic to the $n$-torus $\mathbb{T}^n$.
Since $\Lambda$ is a finite index subgroup of ${H}$, the canonical projection $\mathbb{R}^n/\Lambda \rightarrow \mathbb{R}^n/{H}$ is a finite covering map.
The diffeomorphism $\varphi$ induces a homeomorphism between $\mathbb{R}^n/{H}$ and $\Omega/G$, which implies that there exists a finite covering map $\mathbb{T}^n \rightarrow \Omega/G$.
\end{proof}

Recall that in Theorem~\ref{hessian_approximation}, we proved that hyperbolic affine structures can be approximated by those with rational canonical classes, and we then established that Hessian manifolds with rational canonical classes enjoy many good properties. We now explore a stunning fact that if a closed orientable Hessian $3$-manifold is not a finite quotient of a $3$-torus then its canonical class is automatically rational.

\begin{theorem}\label{rational_canonical_class}
Let $((M,\nabla),h)$ be a closed orientable Hessian $3$-manifold. Suppose that there exists no finite covering map $\mathbb{T}^3\rightarrow M$.
Then, the affine manifold $(M,\nabla)$ is hyperbolic and its canonical class is rational.
\end{theorem}
\begin{proof}
By Theorem~\ref{convex_splitting}, there exist a finite covering map $f\colon E\rightarrow M$ and a compact hyperbolic affine manifold $(B,\nabla_B)$ of dimension $k\leq n$ such that 
\begin{enumerate}
    \item [1.] $(E,f^*\nabla)$ is the product of affine manifolds $(B,\nabla_B)$ and $(\mathbb{T}^{n-k},D)$;
    \item [2.] $f^*h$ is the direct product of Riemannian metric $g$ and $\delta$,
\end{enumerate}
where $g$ here is a Hessian metric on $(B,\nabla_B)$, and $D$ is the Levi-Civita connection of the flat torus $(\mathbb{T}^{n-k},\delta)$.

Assume that $k \leq 2$. Then the universal cover of $(B,\nabla_B)$ is isomorphic to a regular convex cone of dimension at most $2$.
Up to affine equivalence, the available regular convex cones in dimensions $1$ and $2$ are the positive half-line and the positive quadrant, respectively.
By Lemma \ref{positive_octant}, we obtain that $B$ admits a finite covering map from $\mathbb{T}^k$.
Consequently, the total space $E$ admits a finite covering map from $\mathbb{T}^k \times \mathbb{T}^{3-k} \cong \mathbb{T}^3$, which contradicts the hypothesis.
Therefore, we must have $k = 3$.

Since $k = 3$, the flat torus factor $\mathbb{T}^{3-k}$ is trivial, and $E = B$.
The finite covering space $(E,f^*\nabla)$ is a compact hyperbolic affine manifold.
As the affine hyperbolicity descends through finite covering maps, we conclude that $(M,\nabla)$ itself is a hyperbolic affine manifold.

Let $\pi \colon \tilde{M} \rightarrow M$ be the universal covering of $M$, and $\Gamma := \pi_1(M)$ the fundamental group.
The affine development maps $\tilde{M}$ diffeomorphically onto a regular convex cone $\Omega$ in $\mathbb{R}^3$, and $\Gamma$ acts freely and cocompactly on $\Omega$ via an affine holonomy representation $\rho \colon \Gamma \rightarrow \operatorname{Aut}(\Omega)$.

Assume that $\Omega$ is decomposable, that is, there exists regular convex cones $\Omega_1$ and $\Omega_2$ such that $\Omega=\Omega_1\times\Omega_2$ .
It is readily seen that the unique decomposable regular convex cone in dimension $3$ is isomorphic to the positive octant in $\mathbb{R}^3$.
By Lemma \ref{positive_octant}, the quotient $\Omega/\Gamma = M$ admits a finite covering map from $\mathbb{T}^3$, which again contradicts the hypothesis.
Therefore, $\Omega$ is not decomposable.

Let $H$ be the subgroup of $\operatorname{Aut}(\Omega)$ consisting of linear automorphisms with absolute determinant equal to $1$.
By the theory of divisible convex sets, since $\Omega$ is not decomposable, the automorphism group $\operatorname{Aut}(\Omega)$ decomposes as the direct product $H \times \mathbb{R}$.
The canonical class of $(M,\nabla)$ is represented by the first Koszul form $\varkappa$ of $((M,\nabla),h)$.
The period homomorphism $\phi \colon \Gamma \rightarrow \mathbb{R}$ of $\varkappa$ is given by
$$
\phi(\gamma) = -\log |\det \rho(\gamma)|
$$
for all $\gamma \in \Gamma$. The logarithmic determinant vanishes on the subgroup $H$.
Thus, the restriction of $\phi$ to $\Gamma$ factors through the canonical projection onto the homothety center $\mathbb{R}$.
Because $\Gamma$ acts cocompactly on $\Omega$, the projection of $\rho(\Gamma)$ onto the homothety center is a uniform lattice.
Consequently, the period group $\phi(\Gamma)$ is a discrete subgroup of $\mathbb{R}$.

Since the period group is discrete, the de Rham cohomology class $[\varkappa]$ is a rational multiple of an integral class, which implies that the canonical class of $(M,\nabla)$ is rational.
\end{proof}

Theorem~\ref{rational_canonical_class} provides a structural dichotomy for a closed orientable Hessian 3-manifold: Either it is finitely covered by a 3-torus, or its canonical class is rational. As an immediate corollary, we suggest algorithmic computable bounds for the Betti numbers of closed orientable Hessian 3-manifolds.

\begin{corollary}\label{b1(M)_3-dim}
Let $((M,\nabla),h)$ be a closed oriented Hessian $3$-manifold and $\varkappa$ its first Koszul form.
Assume there exists no finite covering map $\mathbb{T}^3\rightarrow M$. Then, the center of $\pi_1(M)$ is a infinite cyclic group generated by a homology class $c\in\mathrm{H}_1(M;\mathbb{Z})$ satisfying ${\langle[\varkappa],c\rangle}>0$, and it holds that
$$
\frac{\langle e(E)\smile[\varkappa],[M]\rangle}{\langle[\varkappa],c\rangle}+b_1(M)=3-p+\sum_{i=1}^p\frac{1}{\operatorname{ord}(\gamma_i)}\leq3,
$$
where $e(E)$ is the Euler class of the distribution $E:=\operatorname{Ker}(\varkappa)$, and $\gamma_1,\dots,\gamma_p$ are the elliptic generators in the standard presentation of the Fuchsian group $\Gamma:=\operatorname{Inn}(\pi_1(M))$.
\end{corollary}
\begin{proof}
By Theorem~\ref{3-dim_classification} and Theorem~\ref{rational_canonical_class}, there exists a closed orientable surface $\Sigma_g$ of genus $g \geq 1$ and a periodic mapping class $[\varphi] \in \operatorname{Mod}(\Sigma_g)$ of order $m \geq 1$, such that $M$ is diffeomorphic to the mapping torus $f\colon M\rightarrow\mathbb{S}^1$ of $\varphi$, and the first Koszul form $\varkappa$ of $((M,\nabla),h)$ vanishes on the fibers of $f$.
Since there exists no finite covering map $\mathbb{T}^3\rightarrow M$, by virtue of Theorem~\ref{3-dim_classification}, we obtain that $g\geq2$.
The fundamental group $\pi_1(M)$ fits into the short exact sequence $$1 \longrightarrow \pi_1(\Sigma_g) \longrightarrow \pi_1(M) \longrightarrow \mathbb{Z} \longrightarrow 1.$$
Because $\pi_1(\Sigma_g)$ has trivial center and $\varphi$ has finite order, the center $Z(\pi_1(M))$ is isomorphic to $\mathbb{Z}$ and is generated by the $m$-th power of the suspension flow of $\varphi$.
Denote by $c\in\mathrm{H}_1(M;\mathbb{Z})$ the homology class of this generator.
The quotient group $\Gamma := \pi_1(M) / Z(\pi_1(M))$ admits a standard group presentation with $2h$ hyperbolic generators and $p$ elliptic generators $\gamma_1, \dots, \gamma_p$ with $n_i := \operatorname{ord}(\gamma_i)\leq m$.
The quotient surface ${N} := \Sigma_g / \langle \varphi \rangle$ is a closed orientable surface of genus $h$.
By the Wang exact sequence, the first Betti number of the mapping torus satisfies $b_1(M) = 2h + 1$, which implies $\chi({N}) = 2 - 2h = 3 - b_1(M)$.
The canonical projection $\pi \colon \Sigma_g \rightarrow {N}$ is a branched covering of degree $m$.
By the Riemann-Hurwitz formula, the Euler characteristics satisfy
$$
\chi(\Sigma_g) = m\chi({N}) - m\sum_{i=1}^p \left(1 - \frac{1}{n_i}\right).
$$
Dividing by $m$ and substituting $\chi({N}) = 3 - b_1(M)$ yields
$$
\frac{\chi(\Sigma_g)}{m} + b_1(M) = 3 - p + \sum_{i=1}^p \frac{1}{n_i}.
$$

We evaluate the cohomological pairing $\langle e(E) \smile [\varkappa], [M] \rangle$.
Since $(M,h)$ is not a flat Riemannian manifold, Theorem~\ref{Shima_flat_thm} implies that $\varkappa \neq 0$.
By Proposition~\ref{Shima_parallel_thm}, $\bar{\nabla}\varkappa = 0$, so $\varkappa$ is a nowhere vanishing closed 1-form by parallel transport.
The distribution $E = \operatorname{Ker}(\varkappa)$ is an integrable foliation of rank $2$ tangent to the fibers $\Sigma_g$.
The evaluation of the Euler class $e(E)$ on the fundamental class of the fiber is $\langle e(E), [\Sigma_g] \rangle = \chi(\Sigma_g)$.
Let $\mu \in \mathrm{H}^1(\mathbb{S}^1;\mathbb{Z})$ be the fundamental cohomology class of $\mathbb{S}^1$.
It is readily seen that $\langle f^*\mu, c \rangle = m$.
Since $\varkappa$ vanishes on the fiber $\Sigma_g$ of $f\colon M\rightarrow\mathbb{S}^1$, the de Rham cohomology class $[\varkappa]$ is proportional to $f^*\mu$.
Direct computation then yields the relation $[\varkappa] = m^{-1}\langle[\varkappa], c\rangle f^*\mu$.
Moreover, since the suspension flow is transverse to the fibers $\Sigma_g$, we may choose the orientation of $c$ such that $\langle[\varkappa], c\rangle > 0$.
By integration along the fiber, we obtain
\begin{align*}
\langle e(E) \smile [\varkappa], [M] \rangle =& m^{-1}\langle[\varkappa], c\rangle \langle e(E) \smile f^*\mu, [M] \rangle \\=& m^{-1}\langle[\varkappa], c\rangle \langle e(E), [\Sigma_g] \rangle \\=& \langle[\varkappa], c\rangle\frac{\chi(\Sigma_g)}{m}.
\end{align*}
Dividing by $\langle[\varkappa], c\rangle$ and substituting into the rearranged Riemann-Hurwitz equation yields the equality
$$
\frac{\langle e(E)\smile[\varkappa],[M]\rangle}{\langle[\varkappa],c\rangle} + b_1(M) = 3 - p + \sum_{i=1}^p \frac{1}{\operatorname{ord}(\gamma_i)}.
$$
Since $\operatorname{ord}(\gamma_i) \geq 2$ for each $i \in \{1, \dots, p\}$, we have $1/\operatorname{ord}(\gamma_i) \leq 1/2$.
Thus,
$$
3 - p + \sum_{i=1}^p \frac{1}{\operatorname{ord}(\gamma_i)} \leq 3 - p + \frac{p}{2} = 3 - \frac{p}{2} \leq 3.
$$
The proof is completed.
\end{proof}

\subsection{Methods from Integrable Systems}

To conclude this article, we utilize the Labourie-Loftin correspondence to bridge the Hitchin component of cyclic Higgs bundles directly with the geometric data of generic Cheng-Yau $3$-manifolds.

\begin{theorem}\label{Higgs1}
Let $X$ be a compact Riemann surface of genus at least $2$, and $Q\in \mathrm{H}^0(X;3K_X)$, so that 
$$
\Phi:=\begin{pmatrix}
0 & 0 & Q\\
1 & 0 & 0\\
0 & 1 & 0
\end{pmatrix}
$$ 
is a Higgs field on $\mathcal{E}:=K_X\oplus\mathcal{O}_X\oplus K_X^{-1}$. Let $H$ be the solution to the Hitchin equation 
\begin{equation}\label{Hitchin1}
F_H+[\Phi,\Phi^*]=0
\end{equation}
on the cyclic Higgs bundle $(\mathcal{E},\Phi)$, where $F_H$ is the curvature of the Chern connection $\nabla^H$ of $(\mathcal{E},H)$. Then, the holonomy representation $\rho\colon\pi_1(X)\rightarrow\operatorname{SL}(3,\mathbb{R})$ of the flat connection $\nabla:=\nabla^H+\Phi+\Phi^*$ on $\mathcal{E}$ is faithful with discrete image $\Gamma:=\operatorname{Im}(\rho)$, and there exists a regular convex domain $\Omega$ in $\mathbb{R}P^2$ such that $X=\Omega/\Gamma$. 

Specify a positive real constant $\Lambda>0$, and an automorphism $\phi\in \operatorname{Aut}(X)$ of finite order. 

Assume further that the Higgs bundle $(\mathcal{E},\Phi)$ is $\phi$-equivariant. Then, the automorphism $\phi\colon X\rightarrow X$ lifts to a matrix $L_\phi\in\operatorname{SL}(3,\mathbb{R})$ as a projective linear automorphism of $\Omega$, and the semi-direct product $G:=\Gamma\rtimes_\phi\mathbb{Z}$ acts freely and properly discontinuously on the regular convex cone $C(\Omega)$ over $\Omega$ by 
$$
(A,m)\vec{x}:=\exp(m\Lambda)AL_\phi^m\vec{x}
$$
for all $(A,m)\in\Gamma\rtimes_\phi\mathbb{Z}$ and $\vec{x}\in C(\Omega)\subseteq\mathbb{R}^3$, such that $M:=C(\Omega)/G$ is the total space of the mapping torus $f\colon M\rightarrow\mathbb{S}^1$ of $\phi$. Moreover, the restriction $TM|_X$ on each fiber is canonically isomorphic to $\mathcal{E}(\mathbb{R}):=\mathcal{E}\cap\bar{\mathcal{E}}=TX\oplus\mathbb{R}$, so that $D:=\nabla|_{\mathcal{E}(\mathbb{R})}$ and $h:=H|_{\mathcal{E}(\mathbb{R})}$ can be prolonged from $X$ to the entirety of $M$ as constants of motion of the suspension flow on the mapping torus $f\colon M\rightarrow\mathbb{S}^1$. The resulting pair $((M,D),h)$ is a Cheng-Yau $3$-manifold with cosmological constant $\Lambda$.
\end{theorem}
\begin{proof}
By the Labourie-Loftin correspondence, the Hitchin equation $F_H+[\Phi,\Phi^*]=0$ for the cyclic Higgs bundle $(\mathcal{E},\Phi)$ is equivalent to the structure equations of a hyperbolic affine sphere in $\mathbb{R}^3$.
Since the Higgs field $\Phi$ lies in the Hitchin component, the holonomy representation $\rho\colon\pi_1(X)\rightarrow\operatorname{SL}(3,\mathbb{R})$ of the flat connection $\nabla=\nabla^H+\Phi+\Phi^*$ is faithful.
Its image $\Gamma=\operatorname{Im}(\rho)$ is a discrete subgroup that acts freely and properly discontinuously on a regular convex domain $\Omega$ in $\mathbb{R}P^2$, yielding the compact quotient $X\cong\Omega/\Gamma$.

The assumption that $(\mathcal{E},\Phi)$ is $\phi$-equivariant implies that the flat connection $\nabla$ is invariant under $\phi$.
Therefore, $\phi$ lifts to an automorphism of the universal cover, inducing a projective linear transformation $L_\phi\in\operatorname{SL}(3,\mathbb{R})$ that normalizes $\Gamma$.
On the regular convex cone $C(\Omega)$ over $\Omega$, the group $G=\Gamma\rtimes_\phi\mathbb{Z}$ acts via $(A,m)\vec{x}=\exp(m\Lambda)AL_\phi^m\vec{x}$.
Because $\Gamma$ acts freely on the rays of $C(\Omega)$ and the homothety factor $\exp(\Lambda)>1$ ensures proper discontinuity in the radial direction, the $G$-action is free and properly discontinuous.
The quotient space $M=C(\Omega)/G$ is a compact smooth manifold.
The canonical projection $C(\Omega)\rightarrow\Omega$ descends to a fiber bundle $f\colon M\rightarrow\mathbb{S}^1$ with fiber $X$, identifying $M$ as the mapping torus of $\phi$.

The tangent bundle $T(C(\Omega))$ is canonically trivialized as the tube domain $C(\Omega)\times\mathbb{R}^3$.
Taking the quotient by $\Gamma$, the restriction of $TM$ to the fiber $X$ is isomorphic to the flat vector bundle $(\mathcal{E}(\mathbb{R}),\nabla)$, which splits as $TX\oplus\mathbb{R}$, where the trivial line bundle corresponds to the radial vector field.
Consequently, the standard flat torsion-free connection of $\mathbb{R}^3$ descends to the affine structure $D$ on $M$, and $D|_X$ coincides with $\nabla|_{\mathcal{E}(\mathbb{R})}$.

Let $u\in C^\infty(C(\Omega))$ be the Cheng-Yau potential of $C(\Omega)$ with cosmological constant $\Lambda$.
By Lemma~\ref{tensor_calc}, the functional equation $u(t \vec{x})=u(\vec{x})-3\operatorname{log}(t)/\Lambda$ holds for all $t>0$.
Thus, the $1$-form $du$ is homogeneous of degree $-1$, which implies the Hessian quadratic form $\nabla^2u$ is invariant under the homothety $\vec{x}\mapsto \exp(\Lambda) \vec{x}$.
The restriction of $\nabla^2u$ to the affine sphere level sets corresponds to $H|_{\mathcal{E}(\mathbb{R})}$.
Due to this scale invariance, the harmonic metric $h=H|_{\mathcal{E}(\mathbb{R})}$ extends to a Riemannian metric on $M$, invariant under the suspension flow.

Finally, the difference between the flat connection $D$ and the Levi-Civita connection of $h$ along the fibers is the real part of $\Phi+\Phi^*$.
Since the cubic differential $Q$, and hence the matrix-valued tensor $\Phi$, are both totally symmetric, the covariant derivative $Dh$ is also totally symmetric, establishing that $((M,D),h)$ is a Hessian manifold.
By the construction of $h$ using the Cheng-Yau potential $u$, it is readily seen that the first Koszul form $\varkappa$ of $((M,D),h)$ satisfies the field equation $\nabla\varkappa=\Lambda h$.
Thus, we conclude that $((M,D),h)$ is Cheng-Yau.
\end{proof}

The preceding theorem provides a powerful mechanism to explicitly construct Cheng-Yau 3-manifolds directly from the Hitchin component. It is then natural to ask if this construction is exhaustive for our class of Hessian 3-manifolds. The following theorem answers this in the affirmative, establishing that every Cheng-Yau 3-manifold lacking a finite 3-torus covering is inherently realized by the moduli space of cyclic Higgs bundles.

\begin{theorem}\label{Higgs2}
Let $((M,D),h)$ be a Cheng-Yau $3$-manifold with cosmological constant $\Lambda$, and $\varkappa$ its first Koszul form. Suppose that there exists no finite covering map $\mathbb{T}^3\rightarrow M$. Then, the following properties hold: 
\begin{enumerate}
    \item [1.] up to a constant of integration, the equation $\varkappa=\Lambda df$ determines a submersion $f\colon M\rightarrow\mathbb{S}^1$ of which fiber $\Sigma$ is a closed oriened surface of genus at least $2$, such that both $D$ and $h$ are constants of motion of the suspension flow of $f\colon M\rightarrow\mathbb{S}^1$;
    \item [2.] the conformal class of $h|_\Sigma$ induces a complex structure $J$ on $\Sigma$, such that $X:=(\Sigma,J)$ is a compact Riemann surface, and $f\colon M\rightarrow\mathbb{S}^1$ is the mapping torus of an automorphism $\phi\in\operatorname{Aut}(X)$ of finite order;
    \item [3.] the complexification $TM|_X\otimes\mathbb{C}$ of the restriction $TM|_X$ is canonically isomorphic to $\mathcal{E}:=K_X\oplus\mathcal{O}_X\oplus K_X^{-1}$;
    \item [4.] by abuse of notation, we identify $D$ and $h$ with their restrictions on $TM|_X$, then $h$ is the real part of the harmonic metric $H$ of the cyclic Higgs bundle $(\mathcal{E},\Phi)$, and $D$ is the real part of the flat connection $\nabla:=\nabla^H+\Phi+\Phi^*$,
\end{enumerate}
where 
$$
\Phi:=\begin{pmatrix}
0 & 0 & Q\\
1 & 0 & 0\\
0 & 1 & 0
\end{pmatrix}
$$ 
for some $\phi$-invariant cubic differential $Q\in \mathrm{H}^0(X;3K_X)^\phi$, and $\nabla^H$ is the Chern connection of the Hermitian vector bundle $(\mathcal{E},H)$.
\end{theorem}
\begin{proof}
By Theorem~\ref{rational_canonical_class}, the assumption that $M$ admits no finite covering map from $\mathbb{T}^3$ implies that the canonical class of the affine manifold $(M,D)$ is rational. Since $((M,D),h)$ is a Cheng-Yau manifold, the first Koszul form $\varkappa$ is nowhere vanishing and satisfies $D\varkappa = \Lambda h$. The rational cohomology class $[\varkappa]$ determines a submersion $f\colon M\rightarrow\mathbb{S}^1$ satisfying $\varkappa = \Lambda df$. By Theorem~\ref{3-dim_classification}, this submersion $f\colon M\rightarrow\mathbb{S}^1$ is a mapping torus of which the fiber $\Sigma$ is a closed orientable surface of genus $g \geq 2$. Moreover, the suspension flow of $f\colon M\rightarrow\mathbb{S}^1$ is generated by the dual Euler vector field, which preserves both $D$ and $h$, rendering them constants of motion.

The restriction $h|_\Sigma$ is a Riemannian metric on the orientable surface $\Sigma$, which determines a conformal class and hence a complex structure $J$, making $X:=(\Sigma,J)$ a compact Riemann surface. By Corollary~\ref{hyperbolic_affine_fibering}, the monodromy of the mapping torus $f\colon M\rightarrow\mathbb{S}^1$ is periodic, yielding an automorphism $\phi\in\operatorname{Aut}(X)$ of finite order.

We now invoke the Labourie-Loftin correspondence: The universal covering of $(M,D)$ isomorphic to the open cone $C(\Omega)$ over a regular convex domain $\Omega$. The restriction of $h$ to the fiber $X$ coincides with the Blaschke metric of the hyperbolic affine sphere $S$ asymptotic to $C(\Omega)$. By the Labourie-Loftin correspondence, the structure equations of the hyperbolic affine sphere $S$ is equivalent to a solution of the Hitchin equation 
\begin{equation}\label{Hitchin2}
F_H+[\Phi,\Phi^*]=0
\end{equation}
for a cyclic Higgs bundle $(\mathcal{E},\Phi)$ over $X$, where $\mathcal{E}=K_X\oplus\mathcal{O}_X\oplus K_X^{-1}$. This provides the canonical isomorphism between the complexification $TM|_X\otimes\mathbb{C}$ and $\mathcal{E}$.

Now, it is readily seen that, under the Labourie-Loftin correspondence, the Blaschke metric $h|_\Sigma$ is precisely the real part of the harmonic metric $H$, and the affine connection $D$ restricted to the fiber is the real part of the flat connection $\nabla=\nabla^H+\Phi+\Phi^*$. The Higgs field $\Phi$ therefore takes the canonical companion matrix form parameterized by a cubic differential $Q\in \mathrm{H}^0(X;3K_X)$. The $\phi$-invariance of $D$ and $h$ under the suspension flow of  $f\colon M\rightarrow\mathbb{S}^1$, together with the uniqueness of the solution to the Hitchin euqation~(\ref{Hitchin2}), implies that the Hitchin data must be $\phi$-equivariant, resulting in $Q\in \mathrm{H}^0(X;3K_X)^\phi$.
\end{proof}

\begin{remark}
The equivalence established in Theorem~\ref{Higgs1} and Theorem~\ref{Higgs2} highlights a striking rigidity within the flexibility of Hessian structures. As discussed in Section 3, the full space of Hessian metrics on a fixed affine mapping torus forms an infinite-dimensional cone parameterized by feasible global potentials. However, the barycenter of this infinite-dimensional cone, namely the Cheng-Yau metric, is rigidly and bijectively locked to the Hitchin moduli space of the base Riemann surface of the Seifert fibered Cheng-Yau $3$-manifold. This elegant interplay deeply unites the integrable nature of Higgs bundles with the affine differential geometry of low-dimensional topology.
\end{remark}

\section*{Acknowledgements}

The author wants to express the author's most sincere gratitude to Weiyi Zhang for very useful discussions and general advices along the whole preparation process of this manuscript, and to Maxwell Stolarski for a careful proofreading with various valuable suggestions. Special thanks should also go to Saul Schleimer who inspires the author's work on Hessian 3-manifolds.

The author is in debt to the Russian mathematical society, especially, to all the professors teaching the "Math in Moscow" program, and most importantly, to the author's supervisor Alexander Petrovich Veselov at Loughborough University, as they cultivated the author's mathematical literacy and maturity.

This research is completed while the author is studying at the mathematics institute of University of Warwick. The author therefore would like to thank the hospitality of University of Warwick.



\section*{Statements and Declarations}

No funding was received to assist with the preparation of this manuscript.

The author certifies that the author has no affiliations with or involvement in any other organization or entity with any financial interest or non-financial interest in the subject matter or materials discussed in this manuscript.

This article is licensed under a Creative Commons Attribution 4.0 International License, which permits use, sharing, adaptation, distribution and reproduction in
any medium or format, as long as you give appropriate credit to the original author and
the source, provide a link to the Creative Commons licence, and indicate if changes were
made. The images or other third party material in this article are included in the article’s
Creative Commons licence, unless indicated otherwise in a credit line to the material. If
material is not included in the article’s Creative Commons licence and your intended use is
not permitted by statutory regulation or exceeds the permitted use, you will need to obtain
permission directly from the copyright holder.

Data sharing is not applicable to this article as no datasets were generated or analysed during the current study.

The author hereby provides consent for the publication of the manuscript detailed above.

\bibliographystyle{plain}
\bibliography{sn-bibliography}

\end{document}